\newtheorem{theorem}{Theorem}[section]
\newtheorem{lemma}[theorem]{Lemma}
\newtheorem{conjecture}[theorem]{Conjecture}
\newtheorem{claim}[theorem]{Claim}
\newtheorem{corollary}[theorem]{Corollary}
\theoremstyle{definition}
\newtheorem{definition}[theorem]{Definition}
\newcommand{\eps}{\varepsilon}
\newcommand{\su}{\subseteq}
\newcommand{\sm}{\setminus}
\newcommand{\ind}{\operatorname{ind}}
\renewcommand{\l}{\ell}
\newcommand{\E}{\mathbb{E}}
\newcommand{\Vh}{V_{\textnormal{high}}}
\newcommand{\Vm}{V_{\textnormal{med}}}
\newcommand{\Vl}{V_{\textnormal{low}}}
\newcommand{\PR}{\mathcal{P}\!_{<r}}
\newcommand{\U}{\operatorname{U}}
\begin{document}

\begin{frontmatter}[classification=text]
%% EDITOR: this will force the keywords to appear right after the Abstract.
%%   If the abstract is too long and would force the keywords off the
%%   front page, please comment out % [classification=text] above
%%   This way the keywords will be floated on the bottom of the first page
%%   even though the Abstract spills over to the next page.

%%% AUTHOR: Title goes here.  This line is optional.  You must use it
%%   if title has footnote attached or requires nontrivial typesetting,
%%   e.g., inclusion of linebreaks to force nice layout.
%\title{A Completion of the Proof of the Edge-statistics Conjecture} %% please capitalize all significant words

%%% AUTHOR:
%%% List all authors. If you wish, place grant acknowledgements in \thanks.
%%% In brackets include a short tag for each author.
\author[jacob]{Jacob Fox\thanks{Research supported by a Packard Fellowship and by NSF Career Award DMS-1352121.}}
\author[lisa]{Lisa Sauermann}

%%% AUTHOR: Abstract goes here
\begin{abstract}
For given integers $k$ and $\l$ with $0<\ell< {k \choose 2}$, Alon, Hefetz, Krivelevich and Tyomkyn formulated the following conjecture: When sampling a $k$-vertex subset uniformly at random from a very large graph $G$, then the probability to have exactly $\ell$ edges within the sampled $k$-vertex subset is at most $e^{-1}+o_k(1)$. This conjecture was proved in the case $\Omega(k)\leq \ell\leq {k \choose 2}-\Omega(k)$ by Kwan, Sudakov and Tran. In this paper, we complete the proof of the conjecture by resolving the remaining cases. We furthermore give nearly tight upper bounds for the probability described above in the case $\omega(1)\leq \ell\leq o(k)$. We also extend some of our results to hypergraphs with bounded edge size.
\end{abstract}
\end{frontmatter}

%%% AUTHOR: body of paper starts here
\section{Introduction}

Given integers $k\geq 1$ and $0\leq \l\leq {k\choose 2}$, Alon, Hefetz, Krivelevich and Tyomkyn \cite{alon-et-al} asked the following question: If we choose a $k$-vertex subset uniformly at random from a very large graph, what is the maximum possible probability to obtain exactly $\l$ edges within the random $k$-vertex subset? More precisely, for $n\geq k$ they defined $I(n,k,\l)$ to be the maximum possible probability of the event to have exactly $\l$ edges within a uniformly random $k$-vertex subset sampled from an $n$-vertex graph $G$ (where the maximum is taken over all $n$-vertex graphs $G$). Observing that $I(n,k,\l)$ is a monotone decreasing function of $n$, they defined the \emph{edge-inducibility} $\ind(k,\l)=\lim_{n\to\infty} I(n,k,\l)$.

Clearly, $0\leq \ind(k,\l)\leq 1$. If $\l=0$ or $\l={k\choose 2}$, it is easy to see that $\ind(k,\l)=1$ by taking $G$ to be an empty or a complete graph, respectively. However, Alon et al.\ \cite{alon-et-al} showed that for $0< \l< {k\choose 2}$ the edge-inducibility $\ind(k,\l)$ is bounded away from 1. Furthermore, they formulated the following stronger conjecture, which they called the Edge-statistics Conjecture:

\begin{conjecture}[The Edge-statistics Conjecture, \cite{alon-et-al}]\label{conj-1-e}For all integers $k$ and $\l$ with $0< \l<{k\choose 2}$, we have
\[\ind(k,\l)\leq \frac{1}{e}+o_k(1).\]
\end{conjecture}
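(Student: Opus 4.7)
The plan is to fill in the two regimes left open by Kwan, Sudakov, and Tran, namely $0 < \l = o(k)$ and $\binom{k}{2} - \l = o(k)$. The second case reduces to the first by replacing $G$ with its edge-complement, which swaps edges and non-edges and therefore maps ``probability of exactly $\l$ edges in $S$'' to ``probability of exactly $\binom{k}{2}-\l$ edges in $S$'', leaving $\ind(k,\l)$ invariant under this reflection. So it suffices to handle $\l \le \eps k$ for some sufficiently small constant $\eps > 0$, with $\l \ge 1$.

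For such $\l$ and a large $n$-vertex graph $G$, I want to bound the probability that a uniformly random $k$-subset $S$ induces exactly $\l$ edges. The first step is to partition $V(G)$ into classes $\Vh$, $\Vm$, $\Vl$ of high-, medium-, and low-degree vertices, with thresholds tuned so that a vertex in $\Vh$ typically contributes many edges upon entering $S$ while a vertex in $\Vl$ contributes almost none. The main argument then splits according to the size of $\Vh$: if $|\Vh|$ is large in an appropriate sense, then $|S \cap \Vh|$ is spread over many values and each high-degree vertex added to $S$ shifts the induced edge-count by a variable macroscopic amount, so an anti-concentration estimate of swapping or martingale-difference type forces the probability of hitting exactly $\l$ to be $o_k(1)$. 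If $|\Vh|$ is small, I condition on the intersection $S \cap \Vh$; the residual randomness becomes a uniform $(k - |S \cap \Vh|)$-subset of $V(G) \sm \Vh$, and only the edges inside $\Vm \cup \Vl$ together with cross-edges from $S \cap \Vh$ to the remainder still need to be controlled.

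For this residual problem the edges come from a graph of bounded maximum degree and small density, and a Poisson-type approximation should give that the probability of the residual edge-count equalling any specific target $\l' \in \{0, \dots, \l\}$ is at most $e^{-\lambda}\lambda^{\l'}/\l'! + o_k(1)$ for the appropriate mean $\lambda$. Optimizing in $\lambda \ge 0$ yields the uniform bound $(\l')^{\l'}/(e^{\l'} \l'!)$, which equals $1/e$ when $\l' = 1$ and tends to $0$ as $\l' \to \infty$ by Stirling. Summing the conditional bounds over the choices of $S \cap \Vh$ recovers the claimed $\ind(k,\l) \le 1/e + o_k(1)$, and in fact the strictly sharper estimates announced in the abstract for the intermediate regime $\omega(1) \le \l \le o(k)$.

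The hard part will be making the Poisson approximation rigorous: the indicators $\mathbf{1}[\{u,v\} \su S]$ are not independent, and medium-degree vertices can shift individual edge-counts in correlated ways. I plan to tackle this with a factorial-moment computation for the edges spanned by $\Vl$, together with a structural dichotomy for $\Vm$: if enough medium-degree vertices share many common neighbours (or otherwise force the factorial moments to deviate from the Poisson target), then that very substructure produces its own anti-concentration and feeds back into the ``large $\Vh$'' branch of the case split. Choosing the two degree thresholds so that the large-$|\Vh|$ and small-$|\Vh|$ cases close up at a common boundary, and gluing the three vertex classes together without losing more than $o_k(1)$ in the final probability, will be the most delicate point of the argument.
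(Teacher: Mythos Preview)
Your high-level framing (reduce by complement, partition vertices by degree, case on how $S$ meets the high-degree class) does line up with the paper's strategy for $\sqrt{k}\le \l\le Ck$. But the mechanism you propose for extracting the constant $1/e$ is different from the paper's, and as stated it breaks on the extremal example.

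Consider $G=K_{n/k,\,(k-1)n/k}$ with $\l=k-1$. Here $\Vh$ is the small side (size $n/k$), and your argument falls into the ``small $|\Vh|$'' branch. Condition on $S\cap\Vh=\{v\}$. The cross-edges from $v$ hit \emph{all} of $\Vm\cup\Vl$, so the residual edge count is deterministically $k-1$; the conditional probability of hitting the target $\l'=k-1$ is $1$, not $\le (\l')^{\l'}/(e^{\l'}\l'!)+o_k(1)$. Your Poisson heuristic presumed the residual graph has bounded maximum degree, but the cross-edges from a high-degree vertex into $\Vm\cup\Vl$ can have degree $\Theta(n)$, so that premise fails exactly where it matters. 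More generally, once you condition on $S\cap\Vh$, the residual count can be arbitrarily concentrated, so no pointwise Poisson-type bound on the residual can supply the $1/e$.

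The paper locates the $1/e$ elsewhere. It shows (Claim~\ref{claim-p-j-1-e}) that $\mathbb{P}[|A\cap\Vh|=j]\le e^{-1}+o_k(1)$ for every $j\ge 1$ --- hypergeometric anticoncentration --- and then proves that, up to $o_k(1)$ error, at most one value of $|A\cap\Vh|$ is compatible with $e(A)=\l$ (the events $\mathscr{E}_t$ in Section~\ref{sect-A-interesting} are nearly disjoint because each additional high-degree vertex shifts the edge count by $\gg\sqrt{k\log k}$). So the $1/e$ is the maximum point-mass of $|A\cap\Vh|$, not of the residual edge count. The case $A\cap\Vh=\emptyset$ is handled separately and contributes only $o_k(1)$, via the $m(A)$-based counting lemmas of Sections~\ref{sect-proof-propo-3}--\ref{sect-proof-propo-1-2}. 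Finally, the regime $\l\le\sqrt{k}$ is not treated by any degree partition at all: the paper uses an entirely different weighted ``good sequence'' argument (Theorem~\ref{thm-hyper-1-e}, Section~\ref{sect-hyper-1-e}) that gives $\ind(k,\l)\le \frac{k}{k-2\l}\cdot e^{-1}$ directly. Your plan has no analogue of either of these ingredients.
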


The following observations of Alon et al.\ \cite{alon-et-al} motivate this conjecture and show that the constant $1/e$ in the conjecture is the smallest possible: By considering a random graph $G(n,p)$ with $p=1/{k\choose 2}$, one can see that $\ind(k,1)\geq  e^{-1}+o_k(1)$. Furthermore, by considering a complete bipartite graph with parts of sizes $n/k$ and $(k-1)n/k$, one obtains $\ind(k,k-1)\geq  e^{-1}+o_k(1)$.

It is easy to see by considering graph complements that $\ind(k,\l)=\ind(k,{k\choose 2}-\l)$. Hence, when studying the edge-inducibility $\ind(k,\l)$, one may assume that $\l\leq {k\choose 2}/2$.

As mentioned above, Alon et al.\ \cite{alon-et-al} proved that $\ind(k,\l)<1-\eps$ for all $0< \l<{k\choose 2}$ with some absolute constant $\eps$. They furthermore established Conjecture \ref{conj-1-e} for $\Omega(k^2)\leq \l\leq  {k\choose 2}/2$ by proving that $\ind(k,\l)\leq O(k^{-0.1})$ in this range. They made further partial progress towards their conjecture by showing that $\ind(k,\l)\leq \frac{1}{2}+o_k(1)$ if $\omega(k)\leq \l\leq  {k\choose 2}/2$ as well as $\ind(k,\l)\leq \frac{3}{4}+o_k(1)$ for fixed $\l>1$ and $\ind(k,1)\leq \frac{1}{2}+o_k(1)$.

Kwan, Sudakov and Tran \cite{kwan-et-al} subsequently proved that $\ind(k,\l)\leq \sqrt{(k/\l)}\cdot \log^{O(1)}(3\l/k)$ for $k\leq \l\leq  {k\choose 2}/2$. From considering a complete bipartite graph with appropriately sized parts, one can see that their bound is best-possible up to the logarithmic factors. Clearly, their result implies Conjecture \ref{conj-1-e} for $C\cdot k\leq \l\leq  {k\choose 2}/2$, where $C$ is a sufficiently large absolute constant.

Our main contribution in this paper is to resolve Conjecture \ref{conj-1-e} in the remaining cases, namely for $1\leq \l\leq C\cdot k$, where $C$ is an appropriately chosen constant such that the result of Kwan et al.\  \cite{kwan-et-al} yields Conjecture \ref{conj-1-e} for $C\cdot k\leq \l\leq  {k\choose 2}/2$. More precisely, we prove the following:

\begin{theorem}\label{thm-graph-1-e}For any constant $C$, the following is true: For all integers $k$ and $\l$ with $1\leq \l\leq C\cdot k$, we have
\[\ind(k,\l)\leq \frac{1}{e}+o_k(1).\]
\end{theorem}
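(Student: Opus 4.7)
Let $G$ be a graph on $n$ vertices with $n$ very large, and let $S$ be a uniformly random $k$-subset of $V(G)$. The goal is to show $\Pr[e(G[S])=\l] \leq 1/e + o_k(1)$ uniformly over $1 \leq \l \leq Ck$. The plan is to exploit the degree partition $V(G) = \Vh \cup \Vm \cup \Vl$ anticipated by the notation: the thresholds are chosen so that a single vertex of $\Vh$ inside $S$ is expected to produce considerably more than $\l/k$ edges, a single vertex of $\Vl$ produces far fewer, and $\Vm$ is a narrow transition band. The picture is that a $k$-set with exactly $\l$ edges must intersect $\Vh$ in very few vertices and $\Vm$ in a bounded number, so $S \cap \Vl$ accounts for almost all of $S$.

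The first step is a structural reduction. For constants $h_0, m_0$ depending only on $C$, a first-moment argument summing over candidate parts $A \su \Vh$ and $B \su \Vm$ shows that the contribution to $\Pr[e(G[S]) = \l]$ from $k$-sets with $|S \cap \Vh| > h_0$ or $|S \cap \Vm| > m_0$ is $o_k(1)$. It therefore suffices to show, for each admissible pair $(A,B)$ with $|A|\leq h_0$ and $|B|\leq m_0$, that $\Pr[\,e(G[S])=\l \mid S \cap \Vh = A,\ S \cap \Vm = B\,]$ is at most $1/e + o_k(1)$. Under this conditioning, $S \cap \Vl$ is uniformly distributed on $(k-|A|-|B|)$-subsets of $\Vl$, and
\[
e(G[S]) = e(G[A\cup B]) + e(A\cup B,\, S \cap \Vl) + e(G[S \cap \Vl]),
\]
so everything reduces to a point-probability estimate for the last two random terms, which depend only on the low-degree part of $G$. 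When $\l \to \infty$ even slowly, a Littlewood--Offord / local central limit theorem of the flavor used by Kwan, Sudakov and Tran applied to this sum already gives a point probability of order $O(1/\sqrt{\l}) = o_k(1)$, with room to spare.

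The main obstacle is the bounded $\l$ regime, where one needs to extract the \emph{exact} constant $1/e$ rather than merely an $O(1)$ bound. Here the target random variable is a sum of many weakly dependent low-probability indicators (since $\Vl$-vertices have small degree and $S \cap \Vl$ is a small fraction of $\Vl$) and should be well-approximated by a Poisson random variable with some mean $\lambda$; the point probability $\Pr[\mathrm{Poisson}(\lambda)=j] = e^{-\lambda}\lambda^j/j!$ attains its maximum over $\lambda > 0$ and $j \geq 1$ at $(\lambda,j) = (1,1)$ with value $1/e$, matching the $G(n,1/\binom{k}{2})$ extremal example. The delicate part is to make this Poisson comparison quantitative and uniform over all $G$ and all admissible $(A,B)$ without losing the constant, which forces the thresholds defining $\Vh,\Vm,\Vl$ to be chosen simultaneously large enough for the structural reduction and small enough that a Stein/Chen-type coupling inside $\Vl$ remains valid; once that is done, averaging the per-configuration bound against the weights of the admissible pairs $(A,B)$ yields the desired global bound.
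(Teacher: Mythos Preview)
Your reduction ``it suffices to show $\Pr[e(G[S])=\l \mid S\cap\Vh=A,\ S\cap\Vm=B]\le 1/e+o_k(1)$ for each admissible $(A,B)$'' is false, and the extremal construction already witnesses this. Take $G$ to be complete bipartite with parts $P_1$ of size $n/k$ and $P_2$ of size $(k-1)n/k$, and $\l=k-1$. With any reasonable threshold, $P_1\subseteq\Vh$ and $P_2\subseteq\Vl$. Conditioning on $S\cap\Vh=\{v\}$ for some $v\in P_1$ forces $|S\cap P_2|=k-1$, hence $e(G[S])=k-1=\l$ with probability $1$. The $1/e$ here comes not from the conditional edge count but from $\Pr[|S\cap\Vh|=1]\approx e^{-1}$; you cannot push the $1/e$ inside the conditioning. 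Relatedly, your $O(1/\sqrt{\l})$ claim via a local CLT is too strong: the clique-plus-isolated-vertices construction gives $e(G[S])=\binom{X}{2}$ with $X$ approximately Poisson, so the point probability is $\Theta(\l^{-1/4})$, and the paper's Theorem~\ref{thm-graph-o1} shows this exponent is tight. Finally, the Stein--Chen error for the edge count inside $\Vl$ is governed by $\sum_{v}\binom{d_v}{2}(k/n)^3$, which with $d_v\le 10Cn/k$ and $\Theta(\l n^2/k^2)$ edges is $\Theta(\l)$, not $o(1)$; Poisson approximation does not yield the constant $1/e$ uniformly.

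The paper's proof extracts $1/e$ from a different place. For $\l\le\sqrt{k}$ it abandons the degree partition entirely and proves directly (Theorem~\ref{thm-hyper-1-e}) that $\ind(k,\l)\le \tfrac{k}{k-2\l}\cdot e^{-1}$ via a weighted random-sequence argument on ``good'' orderings of $A$. For $\sqrt{k}\le\l\le Ck$ it does use the partition $\Vh\cup\Vm\cup\Vl$, but splits on whether $A\cap\Vh=\emptyset$. The case $A\cap\Vh=\emptyset$ contributes only $o_k(1)$ (via the $\l^{-1/4}$-type bound of Corollary~\ref{coro-propo-o1} together with a second-moment argument showing $A$ has many isolated vertices). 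The entire $1/e$ lives in the case $A\cap\Vh\ne\emptyset$: one shows $|A\cap\Vh|$ must lie in $\{1,\dots,\lfloor\log^2 k\rfloor\}$, that $\Pr[|A\cap\Vh|=j]\le 1/e+o_k(1)$ for each such $j$ (a hypergeometric point-probability fact), and crucially that the events ``$A$ is interesting and $|A\cap\Vh|=j$'' are \emph{nearly disjoint} across $j$, so that $\sum_j\Pr[\mathscr{E}_j]\le 1+o_k(1)$. This near-disjointness step, not any per-configuration bound, is what makes the argument close.
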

Here, the $o_k(1)$-term converges to zero as $k\to\infty$ while $C$ is fixed (the term may depend on $C$, but not on $\l$).

As mentioned above, together with \cite[Theorem 1.1]{kwan-et-al}, this theorem implies Conjecture \ref{conj-1-e}. It furthermore improves \cite[Theorem 1.6]{alon-et-al}, which states the above-mentioned bounds for $\ind(k,\l)$ while $\l$ is fixed.

Theorem \ref{thm-graph-1-e} implies that $\ind(k,1)=  e^{-1}+o_k(1)$ and $\ind(k,k-1)=  e^{-1}+o_k(1)$ (the correponding lower bounds were discussed above). It is an interesting question to study $\ind(k,\l)$ in the ranges away from $\l=1$ and $\l=k-1$. Alon et al.\ \cite[Conjecture 6.1]{alon-et-al} conjectured that $\ind(k,\l)=o_k(1)$ if $\omega(k)\leq \l\leq {k\choose 2}/2$. Kwan et al.\  \cite{kwan-et-al} proved this conjecture as a consequence of their result that $\ind(k,\l)\leq \sqrt{(k/\l)}\cdot \log^{O(1)}(3\l/k)$ for $k\leq \l\leq  {k\choose 2}/2$. Our next theorem shows that $\ind(k,\l)=o_k(1)$ if  $\omega(1)\leq \l\leq o(k)$, which can be seen as an analogue of \cite[Conjecture 6.1]{alon-et-al} in the sublinear range.

\begin{theorem}\label{thm-graph-o1}Let us fix a real number $\eps>0$. Then for all sufficiently large integers $k$, the following holds: For all integers $\l$ with $1\leq \l\leq k/\log^4 k$, we have
\[\ind(k,\l)\leq 90\cdot \l^{-1/4}\]
and for all integers $\l$ with $k/\log^4 k \leq \l\leq (1-\eps)k/2$ we have
\[\ind(k,\l)\leq 90\cdot k^{-1/4} \cdot \log k.\]
\end{theorem}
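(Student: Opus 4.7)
The plan is to analyze the degree structure of an $n$-vertex graph $G$ that nearly achieves $\ind(k,\l)$ and then apply an anticoncentration argument to its edge count. Let $S$ be a uniformly random $k$-subset of $V(G)$ and set $X=e(G[S])$. Extremal examples from earlier works (complete bipartite graphs, sparse random graphs) have a transparent degree structure, so the strategy is to partition $V(G)$ by degree and exploit that structure in order to bound $\Pr[X=\l]$.

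First, I would introduce thresholds depending on $\l$ and $k$ and partition $V(G)$ into classes $\Vh$, $\Vm$, $\Vl$ of high-, medium-, and low-degree vertices. A vertex of degree $d$ contributes in expectation $dk/n$ edges to $G[S]$ whenever it is sampled, so any vertex whose expected contribution already exceeds $\l$ must be placed in $\Vh$; a first-moment/Markov computation then forces $|\Vh|$ to be quite small relative to $n$ whenever $\Pr[X=\l]$ is nontrivial. Symmetrically, vertices of $\Vl$ contribute so few edges that, up to a correction absorbed in a Poisson approximation, the conditional distribution of $X$ depends only on the behaviour of $G$ restricted to the higher-degree classes.

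Next I would condition on the pair $(T,m)$, where $T=S\cap\Vh$ and $m=|S\cap\Vm|$. The edge count $X$ splits as a deterministic contribution from $T$ plus a random contribution coming from edges with at least one endpoint in $(S\setminus T)\cap\Vm$. For typical $(T,m)$ I would then expose the sampled vertices of $\Vm\cup\Vl$ one at a time and control the spread of the number of new edges added at each step. The core ingredient is an Erd\H{o}s--Littlewood--Offord style inequality: unless $G[\Vm]$ is concentrated in a degenerate way, the contribution of a single exposed vertex takes many distinct values each with comparable probability. A single spreading step revealing $\sqrt{\l}$ distinct values, each hit with probability at most $O(\l^{-1/2})$, combines to yield the target bound of order $\l^{-1/4}$.

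The main obstacle is exactly this anticoncentration step. For graphs whose medium-degree part is bipartite-like or otherwise strongly clustered, a single exposure fails to spread $X$ sufficiently, and one must appeal to a switching argument that swaps pairs of vertices in $S$ and tracks the resulting changes in the edge count, thereby exhibiting a long arithmetic-progression-like set of admissible values of $X$. Handling these degenerate structures also dictates the constants in the second regime: when $k/\log^4k\le\l\le(1-\eps)k/2$, the number of medium-degree vertices sampled is of order $k$, the relevant scale for spreading becomes $k$ rather than $\l$, and the coarser thresholds needed in the degree partition produce the additional $\log k$ factor, yielding the announced bound $90\cdot k^{-1/4}\log k$.
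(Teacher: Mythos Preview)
Your proposal takes a fundamentally different route from the paper, and as written it has a genuine gap at the crucial step.

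The paper's proof of this theorem does not use any degree partition of $V(G)$. Instead, it classifies $k$-subsets $A$ with $e(A)=\ell$ according to $m(A)$, the number of non-isolated vertices in $G[A]$. Since $\sqrt{2\ell}\le m(A)\le 2\ell$ always (Claim~\ref{claim-mA-bounds-graph}), it suffices to bound the number of such $A$ with $m(A)$ in this window. The paper does this via three lemmas (covering $m(A)\le\sqrt{k}/2$, then $\sqrt{k}/2\le m(A)\le\eps k$ by dyadic pieces, then $\eps k\le m(A)\le(1-\eps)k$), each proved by a two-stage ``pair and sequence'' argument: for each relevant $A$, a random subset $B\subseteq A$ yields a ``nice'' pair $(A,B)$ with constant probability; then for each fixed $B$ one bounds the number of compatible $A$ by an adaptive random reconstruction of $A\setminus B$ one vertex at a time, exploiting that each step is constrained either to a vertex connected to $B$ or to one isolated in $A$. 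The degree partition $\Vh\cup\Vm\cup\Vl$ that you describe is used in the paper only for Theorem~\ref{thm-graph-1-e}, the $1/e+o_k(1)$ bound in the linear range.

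Your approach is closer in spirit to anticoncentration methods, but the key step is not justified. You write that ``a single spreading step revealing $\sqrt{\ell}$ distinct values, each hit with probability at most $O(\ell^{-1/2})$, combines to yield the target bound of order $\ell^{-1/4}$''; this arithmetic does not parse---if a single value is hit with probability $O(\ell^{-1/2})$ you would get $\ell^{-1/2}$, and you have not explained what weaker spreading gives $\ell^{-1/4}$ or why. More seriously, the extremal construction for this theorem is a clique on $\Theta(\ell^{1/2}n/k)$ vertices (plus isolated vertices), which realizes $\ind(k,\ell)=\Theta(\ell^{-1/4})$; this graph has \emph{no} medium-degree vertices in the sense you propose, and exposing a single sampled vertex contributes either $0$ edges or a tightly concentrated binomial count, so there is no Littlewood--Offord-type spread available. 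You defer exactly these cases to an unspecified ``switching argument'', but since they are the tight examples, that deferred argument is the entire content of the proof and cannot be left as a sketch.
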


It is not hard to see that the exponent $-1/4$ in Theorem \ref{thm-graph-o1} is best-possible. Indeed, if $1\leq \l\leq k$ and $\l={m\choose 2}$ for some positive integer $m$, then considering an $n$-vertex graph $G$ consisting of a clique on $(m/k)\cdot n$ vertices (and no additional edges) shows that $\ind(k,\l)\geq \Omega(m^{-1/2})=\Omega(\l^{-1/4})$.

Kwan et al.\ \cite{kwan-et-al} started studying the edge-inducibility for $r$-uniform hypergraphs, a direction suggested by Alon et al.\ \cite{alon-et-al}. For a hypergraph $G$ with at least $k$ vertices, let $I(G,k,\l)$ be the probability that a uniformly random $k$-vertex subset of $G$ contains exactly $\l$ edges. Now, for $r\geq 1$, $0\leq \l\leq {k\choose r}$ and $n\geq k$, let $I_r(n,k,\l)$ be the maximum of $I(G,k,\l)$ over all  $r$-uniform hypergraphs $G$ on $n$ vertices. Again, $I_r(n,k,\l)$ is a monotone decreasing function of $n$, and one can define $\ind_r(k,\l)=\lim_{n\to\infty} I_r(n,k,\l)$. With this notation, we have $\ind(k,\l)=\ind_2(k,\l)$. A natural analogue of Conjecture \ref{conj-1-e}, suggested by Alon et al.\ \cite{alon-et-al}, is that for fixed $r$ one has $\ind_r(k,\l)\leq e^{-1}+o_k(1)$ for all $0< \l< {k\choose r}$. Note that $\ind_r(k,\l)\geq e^{-1}+o_k(1)$ whenever $\l={k-s\choose r-s}$ for some $1\leq s\leq r$. This can be seen by taking $H$ to be an auxiliary random $s$-uniform hypergraph $G_s(n,p)$ with $p=1/{k\choose s}$, and defining the edges of $G$ to be all $r$-sets that contain one of the edges of $H$. For $s=r-1$, we in particular see that $\ind_r(k,k-r+1)\geq e^{-1}+o_k(1)$.

Our results for graphs for the sublinear range $1\leq \l\leq o(k)$ also extend to $r$-uniform hypergraphs. In fact, they also extend to not necessarily uniform hypergraphs all of whose edges have size bounded by $r$. Here, we insist that all edges of a hypergraph are non-empty. For $r\geq 1$, $0\leq \l\leq {k\choose r}$ and $n\geq k$, let $I_{\leq r}(n,k,\l)$ be the maximum of $I(G,k,\l)$ over all hypergraphs $G$ on $n$ vertices all of whose edges have size at most $r$. Again, $I_{\leq r}(n,k,\l)$ is a monotone decreasing function of $n$, and one can define $\ind_{\leq r}(k,\l)=\lim_{n\to\infty} I_{\leq r}(n,k,\l)$. We clearly have $\ind_r(k,\l)\leq \ind_{\leq r}(k,\l)$. However, note that one cannot hope for $\ind_{\leq r}(k,\l)\leq e^{-1}+o_k(1)$ for all $0<\l<{k\choose r}$, as $\ind_{\leq r}(k,\l)=1$ whenever $\l={k\choose t}$ for some $1\leq t\leq r-1$ (this can be seen by considering a complete $t$-uniform hypergraph).

Note that the case $r=1$ is not interesting, as $I_1(n,k,\l)=I_{\leq 1}(n,k,\l)$ is just given by maximizing a certain probability in a hypergeometric distribution. In particular, it is not hard to check that $\ind_1(k,\l)= \ind_{\leq 1}(k,\l)\leq e^{-1}+o_k(1)$ for all $1\leq \l\leq k-1$.

Our first result for hypergraphs is that $\ind_r(k,\l)\leq \ind_{\leq r}(k,\l)\leq e^{-1}+o_k(1)$ if $1\leq \l\leq o(k)$. This is an immediate consequence of the following theorem.

\begin{theorem}\label{thm-hyper-1-e}For all positive integers $r$, $k$ and $\l$ with $1\leq \l< k/r$, we have
\[\ind_r(k,\l)\leq \ind_{\leq r}(k,\l)\leq \frac{k}{k-r\l}\cdot \frac{1}{e}.\]
\end{theorem}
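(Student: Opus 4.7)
The plan is to combine a swap-based averaging argument with a Poisson-type estimate. Throughout, let $G$ be a hypergraph on $n$ vertices with all edges of size at most $r$, let $S$ be a uniformly random $k$-subset of $V(G)$, and let $X = |E(G[S])|$; the goal is to show $\Pr[X = \l] \leq \frac{k}{k - r\l}\cdot \frac{1}{e}$.

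The first step extracts the prefactor $\frac{k}{k-r\l}$ from a simple geometric observation: when $X(S) = \l$, the $\l$ edges of $G[S]$ span at most $r\l$ vertices of $S$, so $S$ contains at least $k - r\l$ \emph{free} vertices (those lying in no edge of $G[S]$). Picking a uniform random $v \in S$ in addition to $S$, we therefore get $\Pr[v\text{ free}\mid X(S) = \l]\geq (k-r\l)/k$, and hence
\[\Pr[X = \l]\leq \frac{k}{k-r\l}\,\Pr\bigl[X(S) = \l\text{ and }v\text{ is free in }S\bigr].\]
Rewriting in terms of $S' = S\sm\{v\}$, and noting that $(S', v)$ is uniformly distributed over pairs with $S'$ a $(k-1)$-subset of $V(G)$ and $v\in V(G)\sm S'$, the right-hand side equals $\Pr[|E(G[S'])|=\l$ and no edge of $G$ through $v$ is contained in $S'\cup\{v\}]$. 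It therefore suffices to show this probability is at most $1/e$.

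The bound of $1/e$ should reflect the Poisson-type inequality $\max_{\mu\geq 0}\mu^\l e^{-\mu}/\l!\leq 1/e$, which holds for every $\l\geq 1$ (by Stirling, $\l!\geq \l^\l e^{-\l}$, so the maximum, attained at $\mu=\l$, equals $\l^\l e^{-\l}/\l!\leq e^{-1}$, with equality at $\l=1$). I expect the proof to identify a natural mean parameter $\mu$ (for instance, the expected number of edges of $G$ through $v$ that are contained in $S'\cup\{v\}$, or a conditional variant thereof) and to bound the probability above by $\mu^\l e^{-\mu}/\l!$. The principal difficulty is that $|E(G[S'])|$ and the event ``$v$ is safe'' depend on $S'$ in correlated ways; a naive induction on $k$ using the averaging inequality would lose a further factor of $\frac{k-1}{k-1-r\l}$ and fall short of the target. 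I anticipate overcoming this either by conditioning on $v$ and bounding the resulting conditional probability by a direct moment calculation that produces the correct Poisson shape, or by expressing the reformulated probability in closed combinatorial form and verifying that the extremum lies in the Poisson regime giving $1/e$.
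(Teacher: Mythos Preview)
Your first step is correct and elegant: picking a uniformly random \emph{free} vertex $v\in S$ cleanly produces the prefactor $k/(k-r\l)$ and reduces the problem to bounding
\[
P \;=\; \Pr_{(S',v)}\bigl[\,e(S')=\l \text{ and no edge through }v\text{ lies in }S'\cup\{v\}\,\bigr]\;\le\;\frac{1}{e}.
\]
However, from here on you do not have a proof, only a hope. The phrases ``I expect'' and ``I anticipate'' are accurate: neither of your two suggested routes is carried out, and it is not clear either one works. Conditioning on $v$ leaves you with the event $e(S')=\l$ intact --- the same kind of anticoncentration you started with, now on a $(k-1)$-set, with an extra correlated constraint; there is no visible parameter $\mu$ for which the event factors as $\mu^{\l}e^{-\mu}/\l!$. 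Conditioning on $S'$ reduces the $v$-part to a simple density but gives no handle on $\Pr[e(S')=\l]$. You have correctly identified the difficulty (the two conditions are entangled) but not resolved it.

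The paper's argument takes the \emph{opposite} first move: it distinguishes a \emph{non-isolated} vertex $v_k$ rather than a free one. This matters because deleting $v_k$ genuinely lowers the edge count, which lets one set up a recursive counting over ordered sequences $v_1,\dots,v_k$ (``good'' sequences, with $v_k$ non-isolated). The paper puts a carefully designed probability measure $\rho$ on such sequences, computes the normalizing quantities $\Lambda(v_1,\dots,v_j)$ explicitly at $j=k-1$ and bounds them for $j\le k-2$, and then averages over all $(k-1)!$ orderings of $A\setminus\{v_k\}$ via AM--GM. The $1/e$ does not come from $\mu^{\l}e^{-\mu}/\l!$ at all; it comes from the one-variable optimization $\max_{x\ge 0} x\,e^{-(k-r\l)x}=\frac{1}{(k-r\l)e}$, and the factor $k-r\l$ arises inside this exponent because the number of positions in a random ordering at which a certain ``bad'' set has already fully appeared is, on average, at least $(k-r\l)$ times a density parameter. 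In short, the paper's mechanism for producing $1/e$ is an ordering/AM--GM argument combined with a single-variable exponential bound, and this is the substantive content you are missing.
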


The following theorem extends Theorem \ref{thm-graph-o1} to the hypergraph setting.

\begin{theorem}\label{thm-hyper-o1}Let us fix a positive integer $r\geq 3$ and a real number $\eps>0$. Then for all sufficiently large integers $k$, we have
\[\ind_r(k,\l)\leq\ind_{\leq r}(k,\l)\leq 100\cdot \l^{-1/(2r)}\]
for all $1\leq \l\leq (1-\eps)k/r$.
\end{theorem}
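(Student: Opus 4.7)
The plan is to induct on $r$, using Theorem~\ref{thm-graph-o1} as the base case $r=2$. Fix $r\ge 3$, assume the result for hypergraphs with edges of size at most $r-1$, let $G$ be a hypergraph on $n$ vertices (with $n$ large) whose edges have size at most $r$, let $S$ be a uniformly random $k$-vertex subset of $V(G)$, and let $X$ denote the number of edges of $G$ contained in $S$; the goal is to show $\Pr[X=\ell]\le 100\cdot\ell^{-1/(2r)}$.

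For each $v\in V(G)$, let the \emph{link} $L_v$ be the hypergraph on $V(G)\setminus\{v\}$ with edge set $\{e\setminus\{v\}: v\in e\in E(G)\}$, which has edges of size at most $r-1$. I would partition $V(G)=\Vh\cup\Vl$ via a degree threshold $T$ chosen so that the inclusion of a single heavy vertex $v\in\Vh$ in $S$ is expected to contribute on the order of $\ell^{1/r}$ edges to $X$; concretely, $v$ is heavy when the expected number of $G$-edges through $v$ lying inside $S$ is at least a suitable constant times $\ell^{1/r}$.

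The argument then splits into two cases. If $|\Vh|$ occupies a positive fraction of $V(G)$, then I would condition on $S\cap\Vh$: the $G$-edges through heavy vertices that lie in $S$ are encoded by the links $L_v$, which are hypergraphs with edges of size at most $r-1$. Applying the inductive hypothesis to a suitable link (or superposition of links) with effective target $\ell'$ of order $\ell^{(r-1)/r}$ yields a conditional bound of $O((\ell')^{-1/(2(r-1))})=O(\ell^{-1/(2r)})$, which after averaging over $S\cap\Vh$ gives the required estimate. In the complementary case, $|\Vh|$ is a small subfraction of $V(G)$ and almost every vertex has small degree; here I would establish the bound by a direct anti-concentration argument, first showing $\mathrm{Var}(X)=\Omega(\ell^{1/r})$ by a second-moment computation exploiting the degree bound, then invoking Chebyshev or a sharper local-limit-type refinement to conclude $\Pr[X=\ell]\le O(1/\sqrt{\mathrm{Var}(X)})=O(\ell^{-1/(2r)})$. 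The hypothesis $\ell\le(1-\eps)k/r$ enters here to guarantee enough ``room'' for anti-concentration to operate, ruling out trivial configurations such as a nearly complete $t$-uniform subhypergraph for $t<r$.

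The hardest points to navigate will be, first, preserving the explicit constant $100$ through the induction rather than letting it blow up (likely requiring the slightly stronger base-case constant $90$ from Theorem~\ref{thm-graph-o1} and careful constant tracking in the reduction); second, handling hypergraphs with edges of several different sizes simultaneously, since an edge of size $s<r$ contributes to $X$ at the scale $(k/n)^s$ and must be weighed accordingly in the degree threshold and case analysis; and third, precisely tracking how the target $\ell$ splits between the contribution from edges contained in $\Vl$ and the contribution from edges touching $\Vh$, which I expect to be the main combinatorial bookkeeping underlying the inductive step.
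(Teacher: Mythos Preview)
Your proposal outlines a plan rather than a proof, and the plan diverges entirely from the paper's approach while leaving several steps that would not go through as stated.

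The paper does not induct on $r$ at all. Instead it controls everything through the quantity $m(A)$, the number of non-isolated vertices in $A$. Claim~\ref{claim-mA-bounds-hyper} shows that if $e(A)=\ell$ then $\tfrac{1}{6}r\ell^{1/r}\le m(A)\le r\ell\le (1-\eps)k$, and then Corollary~\ref{coro-propo-o1} (built on Lemmas~\ref{propo-1}, \ref{propo-2}, \ref{propo-3}) bounds the number of $k$-subsets with $e(A)=\ell$ and $m(A)$ in a prescribed range by roughly $\sqrt{r}/\sqrt{c'}$ where $c'$ is the lower bound on $m(A)$. Plugging in $c'=\tfrac{1}{6}r\ell^{1/r}$ gives $O(\ell^{-1/(2r)})$ directly, with explicit constants. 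The underlying lemmas are proved by a random-process double-counting argument (pleasant/nice pairs and tidy/tame sequences), not by any link reduction or variance computation.

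In your outline, two steps are genuine gaps. First, in the ``few heavy vertices'' case you assert $\Pr[X=\ell]\le O(1/\sqrt{\mathrm{Var}(X)})$ via ``Chebyshev or a sharper local-limit-type refinement.'' Chebyshev gives no such pointwise bound; a local limit theorem for the edge count of a random $k$-subset of a general hypergraph is a substantial result in its own right (this is essentially the content of the Kwan--Sudakov--Tran paper in the linear regime), and you would need to supply it from scratch. Second, the inductive step via links does not cleanly reduce to $\ind_{\le r-1}$: after conditioning on $S\cap\Vh$, the remaining random edge count still includes edges lying entirely inside $\Vl$, which can have size up to $r$, so the residual problem is not an instance of the inductive hypothesis. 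The ``superposition of links'' captures only edges through heavy vertices, and you give no mechanism to decouple these from the size-$r$ edges inside $\Vl$. You flag these as the ``hardest points,'' and indeed they are not merely bookkeeping but the entire substance of the argument; as written, the plan has no proof underneath it.
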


Again, the exponent $-1/(2r)$ is best-possible. Indeed, if $1\leq \l\leq k$ and $\l={m\choose r}$ for some positive integer $m$, then considering an $n$-vertex hypergraph $G$ consisting of an $r$-uniform clique on $(m/k)\cdot n$ vertices (and no additional edges) shows that $\ind(k,\l)\geq \Omega(m^{-1/2})=\Omega(\l^{-1/(2r)})$.

Another possible direction to extend the study of the edge-inducibility $\ind(k,\l)$ for graphs, is to consider more restricted families of induced subgraphs on $k$-vertices with $\l$ edges. One initial motivation for Alon et al.\ \cite{alon-et-al} to introduce and study edge-inducibilities was the close relationship to graph inducibilities, as introduced by Pippenger and Golumbic \cite{pippenger-golumbic}. The inducibility of a $k$-vertex graph $H$ is the maximum possible probability of the event to have an induced subgraph isomorphic to $H$ when sampling a $k$-vertex subset uniformly at random from a large graph $G$ (where again, we consider the limit as the number of vertices of $G$ tends to infinity). The graph inducibility problem has attracted a lot of attention recently, see for example \cite{balogh-hu-lidicky-pfender, hefetz-tyomkyn, kral-norin-volec, yuster}. If $H$ has $k$ vertices and $\l$ edges, then the graph inducibility of $H$ is clearly bounded by $\ind(k,\l)$. We think that it is an interesting question to study notions of inducibility that lie between graph inducibility and edge-inducibility.

For example, one can ask the following question: What is the maximum possible probability of the event to have an induced forest with precisely $\l$ edges when sampling a $k$-vertex subset uniformly at random from a large graph $G$? The following theorem implies an upper bound for this probability if $1\leq \l\leq \sqrt{k}/4$.

\begin{theorem}\label{thm-graph-forest}Let $k$ and $\l$ be positive integers with $1\leq \l\leq \sqrt{k}/4$ and let $G$ be a graph on $n \geq k$ vertices. Then the number of $k$-vertex subsets $A\su V(G)$ that induce a forest with exactly $\l$ edges is at most
\[50\cdot \l^{-1/2}\cdot \frac{n^k}{k!}.\]
\end{theorem}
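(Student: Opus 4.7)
For each $k$-subset $A$ inducing a forest with $\ell$ edges, let $B=B_A \subseteq A$ be the set of non-isolated vertices of $G[A]$, and let $I=A\setminus B$. Since $G[B]$ is a forest with $\ell$ edges and no isolated vertex, we have $\ell+1\le |B|\le 2\ell$; moreover $I$ is an independent set in $G$ whose vertices have no neighbors in $B$. The plan is to count good $A$'s by summing over $s=|B|$:
\[
\#\{\text{good } A\} \;=\; \sum_{s=\ell+1}^{2\ell}\ \sum_{\substack{B\subseteq V(G):\ |B|=s,\\ G[B]\text{ forest, }\ell\text{ edges, no iso}}} J(B),
\]
where $J(B)$ counts valid extensions $I$ and is bounded trivially by $\binom{n-s}{k-s}$.

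Next I would bound the number of valid $B$'s of a given size $s$ via an ordered rooted-forest encoding. For each valid $B$, designate one root per tree component (totaling $s-\ell$ roots) and list the vertices of $B$ in BFS-order so that every non-root's parent appears earlier. This identifies $B$ with an ordered tuple $(v_1,\dots,v_s)$ of distinct vertices of $V(G)$ together with a parent function $p$ assigning to each non-root index $i$ some $p(i)<i$ with $\{v_i,v_{p(i)}\}\in E(G)$. The number of such encodings is controlled by $m=|E(G)|$ through the parent-edge constraints, together with combinatorial factors for root placements.

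The contributions from different $s$ are then combined, and one applies Stirling-type estimates. The extremal case is $s=2\ell$, corresponding to $G[B]$ being an induced matching of $\ell$ edges: here one recovers the matching-based lower-bound construction, and the Poisson-like distribution of the number of induced matching edges in a random $k$-subset is precisely what produces the sharp $\ell^{-1/2}$ factor (coming from $\ell^\ell e^{-\ell}/\ell!\asymp\ell^{-1/2}$). For smaller $s$, the forest $G[B]$ must contain vertices of degree at least $2$, leaving fewer ways for $G[B]$ to be embedded as an induced subgraph of $G$, so those contributions should be negligible compared to the matching case.

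The main obstacle I expect is balancing the sum over $s\in[\ell+1,2\ell]$: for each $s$, neither the count of valid $B$'s nor the extension count $J(B)$ is uniformly tight on its own, and one must package them together with care. In particular, exploiting the independence constraint on $I$ (rather than using the trivial bound $J(B)\le\binom{n-s}{k-s}$) will likely be needed to obtain the correct $\ell^{-1/2}$ exponent rather than a weaker $\ell^{-1/4}$; this might be achieved via a Kruskal-Katona-style inequality for independent sets, or by a direct structural restriction on configurations near $B$ using that each vertex in $I$ must avoid the entire closed neighborhood of $B$.
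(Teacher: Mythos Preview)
Your sketch has a genuine gap and also misdiagnoses where the $\ell^{-1/2}$ comes from. The paper uses the forest hypothesis only to deduce $\ell+1\le m(A)\le 2\ell\le\sqrt{k}/2$, and then applies a general lemma (Lemma~\ref{propo-1}) valid for any graph: the number of $k$-subsets $A$ with $e(A)=\ell$ and $c\le m(A)\le\sqrt{k}/2$ is at most $32\sqrt{2}\,c^{-1/2}\,n^k/k!$. Taking $c=\ell$ finishes in one line. In particular, the jump from the generic $\ell^{-1/4}$ (for arbitrary $A$ with $e(A)=\ell$) to $\ell^{-1/2}$ (for forests) is exactly the jump from the generic bound $m(A)\ge\sqrt{2\ell}$ to the forest bound $m(A)\ge\ell$, plugged into the \emph{same} $c^{-1/2}$ lemma; it has nothing to do with the independence constraint on $I$ or a refined enumeration of forest shapes.

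Lemma~\ref{propo-1} itself is proved by a two-sided randomization, not by fixing the non-isolated set $B_A$ as you propose. One retains each vertex of $A$ in a subset $B\subseteq A$ independently with probability $p=1-\tfrac{1}{8r}$ and shows the pair $(A,B)$ is ``$z$-nice'' (with $z=c/(32r)$) with probability at least $\tfrac14$; then, for each fixed $B$, the number of compatible $A$ is bounded by building $A\setminus B$ as an ordered sequence $v_1,\dots,v_a$ in which the running edge count $e(B\cup\{v_1,\dots,v_{i-1}\})$ determines whether $v_i$ must or must not be connected to $B$. This forces at most $X^h(n-X)^{a-h}$ ordered choices for some fixed $X$ and some $h\ge z$, and Stirling converts that into the $z^{-1/2}$ saving. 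Your plan of bounding $N_s$ and $J(B)$ separately has no mechanism to capture their anticorrelation as $G$ varies, which is precisely what the sequential anticoncentration argument supplies; and since the lemma holds even for hypergraphs with no meaningful independence structure, the Kruskal--Katona direction you float is a red herring.
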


Note that for sufficiently large $n$, Theorem \ref{thm-graph-forest} is tight up to a constant factor. Indeed, consider a graph $G$ consisting of a complete bipartite graph with parts of sizes $n/k$ and $\l\cdot n/k$ and with $(k-\l-1)\cdot n/k$ additional isolated vertices. When sampling a $k$-vertex subset uniformly at random from this graph, with probability $\Omega(\l^{-1/2})$ we obtain an induced star with $\l$ edges together with $k-\l-1$ additional isolated vertices. Thus, Theorem \ref{thm-graph-forest} is tight up to a constant factor for all $1\leq \l\leq \sqrt{k}/4$. If $\l=o(\sqrt{k})$ one can also consider a random graph $G(n,p)$ with $p=\l/{k \choose 2}$ and note that when sampling a $k$-vertex subset, one obtains an induced matching of exactly $\l$ edges with probability $\Omega(\l^{-1/2})$.  

We remark that we did not attempt to optimize the constants in Theorems \ref{thm-graph-o1}, \ref{thm-hyper-o1} and \ref{thm-graph-forest}.

\noindent\textit{Organization.} We will first prove Theorem \ref{thm-hyper-1-e} in Section \ref{sect-hyper-1-e}. Note that applying this theorem to $r=2$ yields Theorem \ref{thm-graph-1-e} in the case where $1\leq \l\leq o(k)$. In Section \ref{sect-proof-o1}, we will prove Theorems \ref{thm-graph-o1}, \ref{thm-hyper-o1} and \ref{thm-graph-forest}. The proofs will rely on three lemmas, which we will prove in Sections \ref{sect-proof-propo-3} and \ref{sect-proof-propo-1-2}. A corollary of these lemmas will also be an important ingredient in the proof of Theorem \ref{thm-graph-1-e}. This proof can be found in Section \ref{sect-proof-graph-1-e}, apart from the proofs of several other lemmas which we will postpone to Sections \ref{sect-proofs-two-lemmas} to \ref{sect-A-interesting}.

\noindent\textit{Notation.} All logarithms are to base 2. For integers $m\geq 1$ and $0\leq t\leq m$, let $(m)_t$ denote the falling factorial ${m\choose t}\cdot t!=m\cdot (m-1)\dotsm (m-t+1)=\prod_{i=0}^{t-1}(m-i)$. Note that $(m)_0=1$.

All edges of all hypergraphs are assumed to be non-empty.

For a hypergraph $G$, let $V(G)$ denote its vertex set. For a vertex $v\in V(G)$, let $\deg_G(v)$ be its degree (which is the number of edges of $G$ that contain $v$). Similarly, for a subset $W\su V(G)$ and a vertex $v\in W$, let $\deg_W(v)$ denote the degree of $v$ in the sub-hypergraph induced by $W$.

Furthermore, for a hypergraph $G$ and a subset $W\su V(G)$, let $e(W)$ be the number of edges of $G$ that are subsets of $W$. Let us call a vertex $v\in W$ \emph{non-isolated in $W$} if there is an edge $e\su W$ with $v\in e$, and let us call $v$ \emph{isolated in $W$} otherwise. Finally, let $m(W)$ denote the number of vertices of $W$ that are non-isolated in $W$. If all edges of $G$ have size at most $r$, then we clearly have $m(W)\leq r\cdot e(W)$.

For a graph $G$ and a vertex $v\in V(G)$, let $N(v)\su V(G)\sm \lbrace v\rbrace$ denote the neighborhood of $v$ in $G$.

All the $o_k(1)$-terms are supposed to only depend on $k$ and $C$ (and on none of the other variables or objects), and to converge to zero as $k\to \infty$ for every fixed $C$.

\section{Proof of Theorem \ref{thm-hyper-1-e}}\label{sect-hyper-1-e}

In this section, we will prove Theorem \ref{thm-hyper-1-e}. So let us fix positive integers $r$, $k$ and $\l$ satisfying $1\leq \l<k/r$. In particular, $k\geq r+1\geq 2$.

Furthermore, let $G$ be a hypergraph on $n$ vertices all of whose edges have size at most $r$, and let us assume that $n$ is large with respect to $k$. We will show that the number of $k$-vertex subsets $A\su V(G)$ such that the set $A$ contains exactly $\l$ edges is at most
\[\frac{k}{k-r\l}\cdot \frac{1}{e}\cdot \frac{n^k}{k!}=(1+o_n(1))\cdot\frac{k}{k-r\l}\cdot \frac{1}{e}\cdot {n\choose k},\]
thus proving
\[\ind_{r}(k,\l)\leq \ind_{\leq r}(k,\l)\leq \frac{k}{k-r\l}\cdot \frac{1}{e}\]
as desired.

We may assume that there is at least one $k$-vertex subset $A\su V(G)$ that contains exactly $\l$ edges.

Let us introduce some more notation. For a subset $W\su V(G)$, let
\[\PR(W)=\lbrace X\su W\mid \vert X\vert<r\rbrace\]
be the family of all subsets of $W$ of size smaller than $r$. Furthermore, for a subfamily $\mathcal{X}\su \PR(W)$, let
\[\U(\mathcal{X})=\bigcup_{X\in \mathcal{X}}X\su W\]
be the union of all members of $\mathcal{X}$.

For a vertex $v\in V(G)$ and a subset $W\su V(G)\sm \lbrace v\rbrace$, let
\[\mathcal{N}(v,W)=\lbrace X \su W\rbrace\mid X\cup \lbrace v\rbrace\in E(G)\rbrace.\]
This means $\mathcal{N}(v,W)$ is the family of all subsets $X\su W$ such that $X\cup \lbrace v\rbrace$ is an edge of the hypergraph $G$. In other words, $\mathcal{N}(v,W)$ consists of the sets $e\sm \lbrace v\rbrace$ for all edges $e\in E(G)$ with $e\su W\cup \lbrace v\rbrace$ and $v\in e$. As every edge of $G$ has size at most $r$, we have $\mathcal{N}(v,W)\su \PR(W)$. Also note that $\vert \mathcal{N}(v,W)\vert$ is the number of edges inside $W\cup \lbrace v\rbrace$ that contain $v$, hence
\[\vert \mathcal{N}(v,W)\vert=\deg_{W\cup \lbrace v\rbrace}(v).\]

The following definition will be crucial for our proof of Theorem \ref{thm-hyper-1-e}.

\begin{definition}A sequence $v_1,\dots,v_k$ of distinct vertices of $G$ is called \emph{good} if $e(\lbrace v_1,\dots,v_k\rbrace)=\l$ and $v_k$ is non-isolated in $\lbrace v_1,\dots,v_k\rbrace$. For $0\leq j\leq k-1$ a sequence $v_1,\dots,v_j$ of distinct vertices of $G$ is called \emph{good} if it can be extended to a good sequence $v_1,\dots,v_k$.
\end{definition}

Recall that the condition that $v_k$ is non-isolated in $\lbrace v_1,\dots,v_k\rbrace$ means that there is at least one edge $e\su \lbrace v_1,\dots,v_k\rbrace$ with $v_k\in e$. If $r=2$ and $G$ is a graph, this simply means that $v_k$ has an edge to at least one of the vertices $v_1,\dots,v_{k-1}$.

Note that any good sequence $v_1,\dots,v_k$ must satisfy $e(\lbrace v_1,\dots,v_{k-1}\rbrace)<e(\lbrace v_1,\dots,v_{k}\rbrace)=\l$ and consequently $e(\lbrace v_1,\dots,v_j\rbrace)<\l$ for all $0\leq j\leq k-1$. Furthermore note that for any good sequence $v_1,\dots,v_k$, we have $m(\lbrace v_1,\dots,v_k\rbrace)\geq 1$ as $e(\lbrace v_1,\dots,v_k\rbrace)=\l\geq 1$.

For $0\leq j\leq k-1$ and any good sequence $v_1,\dots,v_j$, set $\lambda(v_1,\dots,v_j)=1$. On the other hand, for a good sequence $v_1,\dots,v_k$, set
\[\lambda(v_1,\dots,v_k)=\frac{1}{m(\lbrace v_1,\dots,v_k\rbrace)}.\]

Now, for $0\leq j\leq k-1$ and a good sequence $v_1,\dots,v_j$, set
\[\Lambda(v_1,\dots,v_j)=\sum_{\substack{v_{j+1}\text{ s.t. }\\v_1,\dots,v_{j+1}\text{ good}}}\lambda(v_1,\dots,v_{j+1}).\]
Note that if $0\leq j\leq k-2$, then $\Lambda(v_1,\dots,v_j)$ simply counts the number of choices for $v_{j+1}\in V(G)$ such that $v_1,\dots,v_{j+1}$ is a good sequence (as $\lambda(v_1,\dots,v_{j+1})=1$ for all such $v_{j+1}$).

Furthermore note that $\Lambda(v_1,\dots,v_j)>0$ for any $0\leq j\leq k-1$ and any good sequence $v_1,\dots,v_j$ (because there is at least one way to extend $v_1,\dots,v_j$ to a good sequence $v_1,\dots,v_k$).

For $1\leq j\leq k$ and a good sequence $v_1,\dots,v_j$, set
\[\rho(v_1,\dots,v_j)=\prod_{i=1}^{j}\frac{\lambda(v_1,\dots,v_i)}{\Lambda(v_1,\dots,v_{i-1})}=\frac{\lambda(v_1)\cdot \lambda(v_1,v_2)\cdot \lambda(v_1,v_2,v_3)\dotsm \lambda(v_1,\dots,v_j)}{\Lambda(\emptyset)\cdot \Lambda(v_1)\cdot \Lambda(v_1, v_2)\dotsm \Lambda(v_1,\dots,v_{j-1})}.\]
Here, by slight abuse of notation, $\emptyset$ stands for the empty sequence.

\begin{lemma}\label{lemma1}For every $1\leq j\leq k$, we have
\[\sum_{v_1,\dots,v_{j} \textnormal { good}}\rho(v_1,\dots,v_j)= 1.\]
\end{lemma}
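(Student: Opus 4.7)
The plan is to prove the lemma by induction on $j$, exploiting the telescoping structure that is built into the definition of $\rho$. In fact, $\rho$ is cooked up exactly so that it behaves like a probability distribution obtained from a step-by-step construction of the sequence $v_1,\dots,v_j$, where at each step the next vertex is chosen with weight proportional to $\lambda$.

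For the base case $j=1$, I observe that $\lambda(v_1)=1$ for every good $v_1$, so $\rho(v_1)=1/\Lambda(\emptyset)$, and by definition $\Lambda(\emptyset)=\sum_{v_1 \text{ good}}\lambda(v_1)=\sum_{v_1 \text{ good}}1$, so the sum equals $1$. For the inductive step, I would use the identity
\[\rho(v_1,\dots,v_j)=\rho(v_1,\dots,v_{j-1})\cdot \frac{\lambda(v_1,\dots,v_j)}{\Lambda(v_1,\dots,v_{j-1})},\]
which is immediate from the product form of $\rho$. Summing first over the last coordinate gives
\[\sum_{v_1,\dots,v_j \text{ good}}\rho(v_1,\dots,v_j)=\sum_{v_1,\dots,v_{j-1} \text{ good}}\rho(v_1,\dots,v_{j-1})\cdot\frac{1}{\Lambda(v_1,\dots,v_{j-1})}\sum_{\substack{v_j\text{ s.t.}\\ v_1,\dots,v_j \text{ good}}}\lambda(v_1,\dots,v_j),\]
and the last sum is exactly $\Lambda(v_1,\dots,v_{j-1})$ by definition. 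The factor cancels, and the inductive hypothesis finishes the computation.

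Two minor bookkeeping points need to be handled carefully so the manipulation above is legal. First, the re-indexing of the double sum requires that whenever $(v_1,\dots,v_j)$ is good, the prefix $(v_1,\dots,v_{j-1})$ is also good; this is immediate since any extension of $(v_1,\dots,v_j)$ to a good length-$k$ sequence is also an extension of $(v_1,\dots,v_{j-1})$. Second, the cancellation of $\Lambda(v_1,\dots,v_{j-1})$ requires that this quantity be nonzero for every good prefix, which is precisely the remark made in the text just before the lemma (every good sequence can be extended one step further into another good sequence, on its way toward a good length-$k$ sequence).

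There is essentially no real obstacle here: the statement is a formal consequence of the way $\Lambda$ is defined as a sum of $\lambda$'s, and the proof is a two-line telescoping induction. The only place one could slip is confusing the summation conventions when $j=k$ (where $\lambda$ takes its nontrivial value $1/m(\{v_1,\dots,v_k\})$), but the argument above does not care about the specific numerical value of $\lambda$ — it only uses that $\Lambda(v_1,\dots,v_{i-1})$ is the sum of $\lambda(v_1,\dots,v_i)$ over the admissible choices of $v_i$.
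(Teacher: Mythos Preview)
Your proposal is correct and matches the paper's own proof essentially line for line: both argue by induction on $j$, handle the base case via $\Lambda(\emptyset)=\sum_{v_1\text{ good}}1$, and for the inductive step use the factorization $\rho(v_1,\dots,v_j)=\rho(v_1,\dots,v_{j-1})\cdot\lambda(v_1,\dots,v_j)/\Lambda(v_1,\dots,v_{j-1})$ together with the definition of $\Lambda$ to collapse the inner sum. Your explicit remarks about prefixes of good sequences being good and about $\Lambda>0$ are exactly the justifications the paper relies on as well.
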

\begin{proof}
Let us prove the lemma by induction on $j$. Note that for $j=1$, by definition, $\Lambda(\emptyset)$ is precisely the number of $v_1\in V(G)$  such that $v_1$ is a good sequence. Thus, 
\[\sum_{v_1 \text { good}}\rho(v_1)=\sum_{v_1 \text { good}}\frac{\lambda(v_1)}{\Lambda(\emptyset)}=\sum_{v_1 \text { good}}\frac{1}{\Lambda(\emptyset)}=1.\]
Now assume $2\leq j\leq k$ and that we have already proved the lemma for $j-1$. For every good sequence $v_1,\dots,v_{j}$, the sequence $v_1,\dots,v_{j-1}$ is also good. Hence
\begin{multline*}
\sum_{v_1,\dots,v_{j} \text { good}}\rho(v_1,\dots,v_j)=\sum_{v_1,\dots,v_{j} \text { good}}\rho(v_1,\dots,v_{j-1})\cdot  \frac{\lambda(v_1,\dots,v_{j})}{\Lambda(v_1,\dots,v_{j-1})}\\
=\sum_{v_1,\dots,v_{j-1} \text { good}}\ \sum_{\substack{v_j\text{ s.t.\ }\\v_1,\dots,v_{j} \text { good}}} \rho(v_1,\dots,v_{j-1})\cdot  \frac{\lambda(v_1,\dots,v_{j})}{\Lambda(v_1,\dots,v_{j-1})}\\
=\sum_{v_1,\dots,v_{j-1} \text { good}}\left(\frac{\rho(v_1,\dots,v_{j-1})}{\Lambda(v_1,\dots,v_{j-1})}\cdot \sum_{\substack{v_j\text{ s.t.\ }\\v_1,\dots,v_{j} \text { good}}} \lambda(v_1,\dots,v_{j})\right)\\
=\sum_{v_1,\dots,v_{j-1} \text { good}} \frac{\rho(v_1,\dots,v_{j-1})}{\Lambda(v_1,\dots,v_{j-1})}\cdot \Lambda(v_1,\dots,v_{j-1})=\sum_{v_1,\dots,v_{j-1} \text { good}} \rho(v_1,\dots,v_{j-1})=1,
\end{multline*}
using the induction hypothesis in the last step.
\end{proof}

Recall that we want to show that the number of $k$-vertex subsets $A\su V(G)$ with $e(A)=\l$ is at most
\[\frac{k}{k-r\l}\cdot \frac{1}{e}\cdot \frac{n^k}{k!}.\]

Note that for every good sequence $v_1,\dots,v_k$, the set $A=\lbrace v_1,\dots,v_k\rbrace$ satisfies $A\su V(G)$ and $e(A)=\l$.  Conversely, for every $k$-vertex subset $A\su V(G)$ with $e(A)=\l$ we can find certain labelings of the elements of $A$ as $v_1,\dots, v_k$ such that $v_1,\dots, v_k$ is a good sequence (we just need to ensure that $v_k$ is one of the $m(A)$ non-isolated vertices in $A$).

By Lemma \ref{lemma1} applied to $j=k$, we have
\[\sum_{\substack{A\su V(G)\\\vert A\vert=k,\, e(A)=\l}}\ \sum_{\substack{\text{labelings }v_1,\dots, v_k\text{ of } A\\\text{s.t.\ }v_1,\dots, v_k\text{ is good}}}\rho(v_1,\dots,v_k)= 1.\]
In order to prove the desired bound on the number of $k$-vertex subsets $A$ with $e(A)=\l$, it therefore suffices to show that
\[\sum_{\substack{\text{labelings }v_1,\dots, v_k\text{ of } A\\\text{s.t.\ }v_1,\dots, v_k\text{ is good}}}\rho(v_1,\dots,v_k)\geq \frac{k-r\l}{k}\cdot e\cdot \frac{k!}{n^k}\]
for every $k$-vertex subset $A\su V(G)$ with $e(A)=\l$.

So from now on, let us fix a $k$-vertex subset $A\su V(G)$ with $e(A)=\l$. 

Note that there are exactly $m(A)\cdot (k-1)!$ ways to label the elements of $A$ by $v_1,\dots,v_k$ such that $v_1,\dots,v_k$ is a good sequence. This is because there are exactly $m(A)$ choices for $v_k\in A$ (as $v_k$ needs to be non-isolated in $A$), and then any labeling $v_1,\dots,v_{k-1}$ of the remaining $k-1$ vertices yields a good sequence $v_1,\dots,v_k$.

It suffices to prove that for each of the $m(A)$ choices for $v_k\in A$, we have
\[\sum_{\substack{\text{labelings }v_1,\dots, v_{k-1}\text{ of } A\sm \lbrace v_k\rbrace\\\text{s.t.\ }v_1,\dots, v_k\text{ is good}}}\rho(v_1,\dots,v_k)\geq \frac{1}{m(A)} \cdot \frac{k-r\l}{k}\cdot e\cdot \frac{k!}{n^k}.\]
So fix $v_k\in A$ such that $v_k$ is non-isolated in $A$. Set $A'=A\sm \lbrace v_k\rbrace$ and note that for any labeling $v_1,\dots, v_{k-1}$ of $A'$, the sequence $v_1,\dots, v_{k}$ is good. So it suffices to prove
\begin{equation}\label{inequality}
\sum_{\text{labelings }v_1,\dots, v_{k-1}\text{ of } A'}\rho(v_1,\dots,v_k)\geq \frac{1}{m(A)} \cdot \frac{k-r\l}{k}\cdot e\cdot \frac{k!}{n^k}.
\end{equation}

Let $d=\deg_A(v_k)$. Note that $1\leq d\leq e(A)=\l$ and
\[e(A')=e(A\sm \lbrace v_k\rbrace)=e(A)-\deg_A(v_k)=\l-d<\l.\]

Furthermore, let $B\su A'$ consist of those vertices in $A'$ that are non-isolated in $A'$. Then we have $\vert B\vert=m(A')\leq r\cdot e(A')=r\cdot (\l-d)$ and $e(B)=e(A')=\l-d$.

Note that for any labeling $v_1,\dots,v_{k-1}$ of $A'$ we have
\begin{multline}\label{eq-w}
\rho(v_1,\dots,v_k)=\frac{\lambda(v_1)\cdot \lambda(v_1,v_2)\cdot \lambda(v_1,v_2,v_3)\dotsm \lambda(v_1,\dots,v_k)}{\Lambda(\emptyset)\cdot \Lambda(v_1)\cdot \Lambda(v_1, v_2)\dotsm \Lambda(v_1,\dots,v_{k-1})}\\
=\frac{1\dotsm 1\cdot (1/m(\lbrace v_1,\dots,v_{k}\rbrace))}{\Lambda(\emptyset)\cdot \Lambda(v_1)\cdot \Lambda(v_1, v_2)\dotsm \Lambda(v_1,\dots,v_{k-1})}\\
=\frac{1}{m(A)}\cdot \frac{1}{\Lambda(\emptyset)\cdot \Lambda(v_1)\cdot \Lambda(v_1, v_2)\dotsm \Lambda(v_1,\dots,v_{k-1})}.
\end{multline}
Let us first analyze the quantity $\Lambda(v_1,\dots,v_{k-1})$. In particular, we will see that this quantity is independent of the labeling $v_1,\dots,v_{k-1}$ of $A'$.

For every labeling $v_1,\dots,v_{k-1}$ of $A'$, the possible extensions of $v_1,\dots,v_{k-1}$ to a good sequence $v_1,\dots,v_{k-1}, v_k'$ are given by those vertices $v_k'\in V(G)\sm A'$ that satisfy $e(A'\cup \lbrace v_k'\rbrace)=\l$ (note that then $v_k'$ will automatically be non-isolated in $\lbrace v_1,\dots,v_{k-1}, v_k'\rbrace=A'\cup \lbrace v_k'\rbrace$, because $e(A')=\l-d<\l$). As we always have
\[\vert \mathcal{N}(v_k',A')\vert=\deg_{A'\cup  \lbrace v_k'\rbrace}(v_k')=e(A'\cup \lbrace v_k'\rbrace)-e(A')=e(A'\cup \lbrace v_k'\rbrace)-(\l-d),\]
the condition $e(A'\cup \lbrace v_k'\rbrace)=\l$ is equivalent to $\vert \mathcal{N}(v_k',A')\vert=d$. Hence the extensions of $v_1,\dots,v_{k-1}$ to a good sequence $v_1,\dots,v_{k-1}, v_k'$ are given by those $v_k'\in V(G)\sm A'$  that satisfy $\vert \mathcal{N}(v_k',A')\vert=d$. Thus,
\begin{multline}\label{eq-last-Q}
\Lambda(v_1,\dots,v_{k-1})=\sum_{\substack{v_{k}'\text{ s.t. }\\v_1,\dots,v_{k-1},v_k' \text{ good}}}\lambda(v_1,\dots,v_{k-1}, v_k')\\
=\sum_{\substack{v_{k}'\in V(G)\sm A'\\\vert \mathcal{N}(v_k',A')\vert=d}}\lambda(v_1,\dots,v_{k-1}, v_k')=\sum_{\substack{\mathcal{X}\su \PR(A')\\ \vert \mathcal{X}\vert=d}}\ \sum_{\substack{v_{k}'\in V(G)\sm A'\\ \mathcal{N}(v_k',A')=\mathcal{X}}}\lambda(v_1,\dots,v_{k-1}, v_k')
\end{multline}
for every labeling $v_1,\dots,v_{k-1}$ of $A'$.

For every subfamily $\mathcal{X}\su \PR(A')$ with $\vert \mathcal{X}\vert=d$, let $0\leq c(\mathcal{X})\leq 1$ be such that the number of vertices $v\in V(G)\sm A'$ with $\mathcal{N}(v,A')=\mathcal{X}$ is precisely $c(\mathcal{X})\cdot n$. Then for every such $\mathcal{X}$ and every labeling $v_1,\dots,v_{k-1}$ of $A'$, the second sum on the right-hand side of (\ref{eq-last-Q}) has exactly $c(\mathcal{X})\cdot n$ summands. Note that $\mathcal{N}(v,A')=\mathcal{X}$ means that $X\cup \lbrace v\rbrace\in E(G)$ for all $X\in \mathcal{X}$, but for no other subsets $X\su A'$.

Now, for every labeling $v_1,\dots,v_{k-1}$ of $A'$, for every subfamily $\mathcal{X}\su \PR(A')$ with $\vert \mathcal{X}\vert=d$ and for every vertex $v_k'\in V(G)\sm A'$ with $\mathcal{N}(v_k',A')=\mathcal{X}$, we claim that we have $m(\lbrace v_1,\dots,v_{k-1}, v_k'\rbrace)=\vert B\cup \U(\mathcal{X})\vert +1$. Indeed, $\lbrace v_1,\dots,v_{k-1}, v_k'\rbrace=A'\cup \lbrace v_k'\rbrace$. The edges inside $A'\cup \lbrace v_k'\rbrace$ are precisely the edges inside $A'$ and the sets of the form $X\cup \lbrace v_k'\rbrace$ for $X\in \mathcal{N}(v_k',A')=\mathcal{X}$. Hence the non-isolated vertices in $A'\cup \lbrace v_k'\rbrace$ are precisely the vertices in the set 
\[\bigcup_{\substack{e\in E(G)\\e\su A'}}e\ \cup \bigcup_{X\in \mathcal{X}}(X\cup \lbrace v_k'\rbrace)=B\cup \U(\mathcal{X})\cup \lbrace v_k'\rbrace.\]
Here we used that $\vert \mathcal{X}\vert=d\geq 1$ and that $B$ was defined as the set of non-isolated vertices in $A'$. Thus, we indeed have $m(\lbrace v_1,\dots,v_{k-1}, v_k'\rbrace)=m(A'\cup \lbrace v_k'\rbrace)=\vert B\cup \U(\mathcal{X})\vert +1$. Hence
\[\lambda(v_1,\dots,v_{k-1}, v_k')=\frac{1}{m(\lbrace v_1,\dots,v_{k-1}, v_k'\rbrace)}=\frac{1}{\vert B\cup \U(\mathcal{X})\vert +1}.\]

Plugging this into (\ref{eq-last-Q}), for every labeling $v_1,\dots,v_{k-1}$ of $A'$ we obtain
\begin{multline}\label{eq-Q-C}
\Lambda(v_1,\dots,v_{k-1})=\sum_{\substack{\mathcal{X}\su \PR(A')\\ \vert \mathcal{X}\vert=d}}\ \sum_{\substack{v_{k}'\in V(G)\sm A'\\ \mathcal{N}(v_k',A')=\mathcal{X}}}\frac{1}{\vert B\cup \U(\mathcal{X})\vert +1}\\
=\sum_{\substack{\mathcal{X}\su \PR(A')\\ \vert \mathcal{X}\vert=d}}c(\mathcal{X})\cdot n\cdot \frac{1}{\vert B\cup \U(\mathcal{X})\vert +1}=n\cdot \sum_{\substack{\mathcal{X}\su \PR(A')\\ \vert \mathcal{X}\vert=d}}\frac{c(\mathcal{X})}{\vert B\cup \U(\mathcal{X})\vert +1}=C\cdot n.
\end{multline}
Here, we set
\[C=\sum_{\substack{\mathcal{X}\su \PR(A')\\ \vert \mathcal{X}\vert=d}}\frac{c(\mathcal{X})}{\vert B\cup \U(\mathcal{X})\vert +1}.\]
In particular, we see that $\Lambda(v_1,\dots,v_{k-1})$ is independent of the labeling $v_1,\dots,v_{k-1}$ of $A'$. Note that $C>0$ as $\Lambda(v_1,\dots,v_{k-1})>0$.

Now, let us find an upper bound for the terms $\Lambda(v_1,\dots,v_{j})$ for $0\leq j\leq k-2$ in the denominator on the right-hand side of (\ref{eq-w}). Recall that for any labeling $v_1,\dots,v_{k-1}$ of $A'$ and any $0\leq j\leq k-2$, the quantity  $\Lambda(v_1,\dots,v_{j})$ counts the number of choices for $v_{j+1}'\in V(G)$ such that $v_1,\dots,v_j,v_{j+1}'$ is a good sequence. So we clearly have $\Lambda(v_1,\dots,v_{j})\leq n$.

Suppose that the labeling $v_1,\dots,v_{k-1}$ of $A'$ and $0\leq j\leq k-2$ are such that $B\su \lbrace v_1,\dots,v_j\rbrace$. Then $e(\lbrace v_1,\dots,v_j\rbrace)=e(B)=e(A')=\l-d$. For any choice of $v_{j+1}'\in V(G)$ such that $v_1,\dots,v_j,v_{j+1}'$ is a good sequence, we must have $e(\lbrace v_1,\dots,v_j, v_{j+1}'\rbrace)<\l$ and therefore
\[\vert \mathcal{N}(v_{j+1}',\lbrace v_1,\dots,v_j\rbrace)\vert=\deg_{\lbrace v_1,\dots,v_j,v_{j+1}'\rbrace}(v_{j+1}')=e(\lbrace v_1,\dots,v_j, v_{j+1}'\rbrace)-e(\lbrace v_1,\dots,v_j\rbrace)<d.\]
Recall that for any subfamily $\mathcal{X}\su \PR(\lbrace v_1,\dots,v_j\rbrace)$ with $\vert \mathcal{X}\vert=d$, there are precisely $c(\mathcal{X})\cdot n$ vertices $v\in V(G)\sm A'$ with $\mathcal{N}(v,A')=\mathcal{X}\su \PR(\lbrace v_1,\dots,v_j\rbrace)$. For all these vertices $v$ we have $\mathcal{N}(v, \lbrace v_1,\dots,v_j\rbrace)=\mathcal{N}(v,A')=\mathcal{X}$ and consequently $\vert \mathcal{N}(v, \lbrace v_1,\dots,v_j\rbrace)\vert=\vert \mathcal{X}\vert=d$. Thus, none of these $c(\mathcal{X})\cdot n$ vertices are eligible choices for $v_{j+1}'\in V(G)$ such that $v_1,\dots,v_j,v_{j+1}'$ is a good sequence. So for any subfamily $\mathcal{X}\su \PR(\lbrace v_1,\dots,v_j\rbrace)$ with $\vert \mathcal{X}\vert=d$, we find $c(\mathcal{X})\cdot n$ vertices in $v\in V(G)\sm A'$ that are ineligible to be chosen as $v_{j+1}'\in V(G)$. These $c(\mathcal{X})\cdot n$ vertices are disjoint for different $\mathcal{X}\su \PR(\lbrace v_1,\dots,v_j\rbrace)$, because they were given by the condition $\mathcal{N}(v,A')=\mathcal{X}$. Hence all in all we found at least
\[\sum_{\substack{\mathcal{X}\su \PR(\lbrace v_1,\dots,v_j\rbrace)\\\vert \mathcal{X}\vert=d}}c(\mathcal{X})\cdot n\]
vertices that cannot be chosen as $v_{j+1}'\in V(G)$ such that $v_1,\dots,v_j,v_{j+1}'$ is a good sequence. Hence we can conclude that the number $\Lambda(v_1,\dots,v_{j})$ of choices for $v_{j+1}'\in V(G)$ satisfies
\begin{multline*}
\Lambda(v_1,\dots,v_{j})\leq n-\sum_{\substack{\mathcal{X}\su \PR(\lbrace v_1,\dots,v_j\rbrace)\\\vert \mathcal{X}\vert=d}}c(\mathcal{X})\cdot n=n\cdot \left(1-\sum_{\substack{\mathcal{X}\su \PR(\lbrace v_1,\dots,v_j\rbrace)\\\vert \mathcal{X}\vert=d}}c(\mathcal{X})\right)\\
\leq n\cdot \exp\left(-\sum_{\substack{\mathcal{X}\su \PR(\lbrace v_1,\dots,v_j\rbrace)\\\vert \mathcal{X}\vert=d}}c(\mathcal{X})\right)
\end{multline*}
if $0\leq j\leq k-2$ and $B\su \lbrace v_1,\dots,v_j\rbrace$.

Thus, for every labeling $v_1,\dots,v_{k-1}$ of $A'$, we obtain
\begin{multline*}
\Lambda(\emptyset)\cdot \Lambda(v_1)\cdot \Lambda(v_1, v_2)\dotsm \Lambda(v_1,\dots,v_{k-2})\\
\leq \prod_{\substack{0\leq j\leq k-2\text{ s.t.}\\B\not\su \lbrace v_1,\dots,v_j\rbrace}}n\cdot \prod_{\substack{0\leq j\leq k-2\text{ s.t.}\\B\su \lbrace v_1,\dots,v_j\rbrace}}n\cdot \exp\left(-\sum_{\substack{\mathcal{X}\su \PR(\lbrace v_1,\dots,v_j\rbrace)\\\vert \mathcal{X}\vert=d}}c(\mathcal{X})\right)\\
=n^{k-1}\cdot \exp\left(-\sum_{\substack{0\leq j\leq k-2\text{ s.t.}\\B\su \lbrace v_1,\dots,v_j\rbrace}}\ \sum_{\substack{\mathcal{X}\su \PR(\lbrace v_1,\dots,v_j\rbrace)\\\vert \mathcal{X}\vert=d}}c(\mathcal{X})\right)\\
=n^{k-1}\cdot \exp\left(-\sum_{\substack{\mathcal{X}\su \PR(A')\\\vert \mathcal{X}\vert=d}}\ \sum_{\substack{0\leq j\leq k-2\text{ s.t.}\\B\cup \U(\mathcal{X})\su \lbrace v_1,\dots,v_j\rbrace}}c(\mathcal{X})\right).
\end{multline*}
Note that for any given labeling $v_1,\dots,v_{k-1}$ of $A'$ and any subfamily $\mathcal{X}\su \PR(A')$, each index $0\leq j\leq k-2$ satisfies $B\cup \U(\mathcal{X})\su \lbrace v_1,\dots,v_j\rbrace$ if and only if $v_{j+1}\in A'\sm (B\cup \U(\mathcal{X}))$ comes after all vertices in $B\cup \U(\mathcal{X})$ in the labeling $v_1,\dots,v_{k-1}$. Hence the number of indices $0\leq j\leq k-2$ with $B\cup \U(\mathcal{X})\su \lbrace v_1,\dots,v_j\rbrace$ is precisely the number of vertices $w\in A'\sm (B\cup \U(\mathcal{X}))$ that come after all vertices in $B\cup \U(\mathcal{X})$ in the labeling $v_1,\dots,v_{k-1}$. Thus, we can rewrite the previous inequality as
\[\Lambda(\emptyset)\cdot \Lambda(v_1)\cdot \Lambda(v_1, v_2)\dotsm \Lambda(v_1,\dots,v_{k-2})\leq n^{k-1}\cdot \exp\left(-\sum_{\substack{\mathcal{X}\su \PR(A')\\\vert \mathcal{X}\vert=d}}\ \sum_{\substack{w\in A'\sm(B\cup \U(\mathcal{X}))\text{ s.t. }w\\\text{after }B\cup \U(\mathcal{X})\text{ in }v_1,\dots,v_{k-1}}}c(\mathcal{X})\right)\]
for every labeling $v_1,\dots,v_{k-1}$ of $A'$.

Plugging this, together with (\ref{eq-Q-C}), into (\ref{eq-w}) yields
\begin{multline*}
\rho(v_1,\dots,v_k)=\frac{1}{m(A)}\cdot \frac{1}{\Lambda(\emptyset)\cdot \Lambda(v_1)\cdot \Lambda(v_1, v_2)\dotsm \Lambda(v_1,\dots,v_{k-2})}\cdot \frac{1}{\Lambda(v_1,\dots,v_{k-2})}\\
\geq \frac{1}{m(A)}\cdot \frac{1}{n^{k-1}}\cdot \exp\left(\sum_{\substack{\mathcal{X}\su \PR(A')\\\vert \mathcal{X}\vert=d}}\ \sum_{\substack{w\in A'\sm(B\cup \U(\mathcal{X}))\text{ s.t. }w\\\text{after }B\cup \U(\mathcal{X})\text{ in }v_1,\dots,v_{k-1}}}c(\mathcal{X})\right)\cdot \frac{1}{C\cdot n}\\
=\frac{1}{m(A)\cdot n^k}\cdot \frac{1}{C}\cdot \exp\left(\sum_{\substack{\mathcal{X}\su \PR(A')\\\vert \mathcal{X}\vert=d}}\ \sum_{\substack{w\in A'\sm(B\cup \U(\mathcal{X}))\text{ s.t. }w\\\text{after }B\cup \U(\mathcal{X})\text{ in }v_1,\dots,v_{k-1}}}c(\mathcal{X})\right)
\end{multline*}
for every labeling $v_1,\dots,v_{k-1}$ of $A'$.

Summing this up for all labelings $v_1,\dots,v_{k-1}$ of $A'$, we obtain
\begin{equation}\label{ineq1}
\sum_{\text{labelings }v_1,\dots, v_{k-1}\text{ of } A'}\rho(v_1,\dots,v_k)
\geq \frac{1}{m(A)\cdot n^k}\cdot \frac{1}{C}\cdot T,
\end{equation}
where we define
\[T:=\sum_{\substack{\text{labelings }\\v_1,\dots, v_{k-1}\text{ of } A'}}\exp\left(\sum_{\substack{\mathcal{X}\su \PR(A')\\\vert \mathcal{X}\vert=d}}\ \sum_{\substack{w\in A'\sm(B\cup \U(\mathcal{X}))\text{ s.t. }w\\\text{after }B\cup \U(\mathcal{X})\text{ in }v_1,\dots,v_{k-1}}}c(\mathcal{X})\right).\]

On the other hand, the inequality between arithmetic and geometric mean yields
\begin{multline*}
T\geq (k-1)!\cdot \exp\left(\frac{1}{(k-1)!}\sum_{\substack{\text{labelings }\\v_1,\dots, v_{k-1}\text{ of } A'}}\ \sum_{\substack{\mathcal{X}\su \PR(A')\\\vert \mathcal{X}\vert=d}}\ \sum_{\substack{w\in A'\sm(B\cup \U(\mathcal{X}))\text{ s.t. }w\\\text{after }B\cup \U(\mathcal{X})\text{ in }v_1,\dots,v_{k-1}}}c(\mathcal{X})\right)\\
=(k-1)!\cdot \exp\left(\frac{1}{(k-1)!}\sum_{\substack{\mathcal{X}\su \PR(A')\\\vert \mathcal{X}\vert=d}}\ \sum_{w\in A'\sm(B\cup \U(\mathcal{X}))}\ \sum_{\substack{\text{labelings }v_1,\dots, v_{k-1}\text{ of } A'\\\text{s.t. }w\text{ after }B\cup \U(\mathcal{X})}}c(\mathcal{X})\right).
\end{multline*}
We claim that for every subfamily $\mathcal{X}\su \PR(A')$ and every $w\in A'\sm(B\cup  \U(\mathcal{X}))$ the number of labelings $v_1,\dots,v_{k-1}$ of $A'$ such that $w$ comes after $B\cup  \U(\mathcal{X})$ is precisely $(k-1)!/(\vert B\cup  \U(\mathcal{X})\vert+1)$. Indeed, when picking a labeling $v_1,\dots,v_{k-1}$ of $A'$ uniformly at random, the probability that $w$ is the last element of the set $B\cup  \U(\mathcal{X})\cup \lbrace w\rbrace$ in that labeling is precisely
\[\frac{1}{\vert B\cup  \U(\mathcal{X})\cup \lbrace w\rbrace\vert}=\frac{1}{\vert B\cup  \U(\mathcal{X})\vert+1}.\]
Consequently, the number of labelings $v_1,\dots,v_{k-1}$ of $A'$ such that $w$ comes after $B\cup  \U(\mathcal{X})$ is indeed $(k-1)!/(\vert B\cup  \U(\mathcal{X})\vert+1)$. Thus, we obtain
\begin{multline*}
T \geq (k-1)!\cdot \exp\left(\frac{1}{(k-1)!}\sum_{\substack{\mathcal{X}\su \PR(A')\\\vert \mathcal{X}\vert=d}}\ \sum_{w\in A'\sm(B\cup \U(\mathcal{X}))}\frac{(k-1)!}{\vert B\cup  \U(\mathcal{X})\vert+1}\cdot c(\mathcal{X})\right)\\
=(k-1)!\cdot \exp\left(\sum_{\substack{\mathcal{X}\su \PR(A')\\\vert \mathcal{X}\vert=d}}\ \sum_{w\in A'\sm(B\cup \U(\mathcal{X}))}\frac{c(\mathcal{X})}{\vert B\cup \U(\mathcal{X})\vert+1}\right)\\
=(k-1)!\cdot \exp\left(\sum_{\substack{\mathcal{X}\su \PR(A')\\\vert \mathcal{X}\vert=d}}(k-1-\vert B\cup \U(\mathcal{X})\vert)\cdot \frac{c(\mathcal{X})}{\vert B\cup \U(\mathcal{X})\vert+1}\right).
\end{multline*}
Recall that $\vert B\vert\leq r\cdot (\l-d)$. Furthermore observe that for every $\mathcal{X}\su \PR(A')$ with $\vert \mathcal{X}\vert=d$, we have $\vert \U(\mathcal{X})\vert\leq (r-1)\cdot \vert \mathcal{X}\vert\leq (r-1)\cdot d$. Thus, for every $\mathcal{X}\su \PR(A')$ with $\vert \mathcal{X}\vert=d$, we have $\vert B\cup \U(\mathcal{X})\vert\leq r\cdot \l-d\leq r\cdot \l-1$ and consequently $k-1-\vert B\cup X\vert\geq k-r\l$. Hence
\begin{multline*}
T\geq (k-1)!\cdot \exp\left(\sum_{\substack{\mathcal{X}\su \PR(A')\\\vert \mathcal{X}\vert=d}}(k-r\l)\cdot \frac{c(\mathcal{X})}{\vert B\cup \U(\mathcal{X})\vert+1}\right)\\
=(k-1)!\cdot \exp\left((k-r\l)\cdot \sum_{\substack{\mathcal{X}\su \PR(A')\\\vert \mathcal{X}\vert=d}}\frac{c(\mathcal{X})}{\vert B\cup \U(\mathcal{X})\vert+1}\right)=(k-1)!\cdot \exp\left((k-r\l)\cdot C\right),
\end{multline*}
where the last step just used the definition of $C$. Plugging this into (\ref{ineq1}), we obtain
\begin{multline*}
\sum_{\text{labelings }v_1,\dots, v_{k-1}\text{ of } A'}\rho(v_1,\dots,v_k)
\geq \frac{1}{m(A)\cdot n^k}\cdot \frac{1}{C}\cdot (k-1)!\cdot \exp\left((k-r\l)\cdot C\right)\\
= \frac{(k-1)!}{m(A)\cdot n^k}\cdot \frac{\exp\left((k-r\l)\cdot C\right)}{C}
\end{multline*}

Now we use the following lemma, which is a straightforward calculation.

\begin{lemma}\label{lemma2}For all $x\in [0,\infty)$ we have
\[x\cdot \exp(-(k-r\l)\cdot x)\leq \frac{1}{k-r\l}\cdot \frac{1}{e}.\]
\end{lemma}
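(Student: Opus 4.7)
The plan is a one-variable calculus optimization. Set $a = k - r\ell$; since $\ell < k/r$ by the hypothesis of Theorem~\ref{thm-hyper-1-e}, we have $a > 0$, so the claimed bound $\tfrac{1}{a}\cdot\tfrac{1}{e}$ is a finite positive number. Define $f(x) = x \cdot \exp(-ax)$ on $[0,\infty)$ and show that $f$ attains its global maximum at $x = 1/a$, with value $\tfrac{1}{a}e^{-1}$, which is exactly the right-hand side.

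First I would check the boundary behavior: $f(0) = 0$ and $f(x) \to 0$ as $x \to \infty$, so the maximum is attained at some interior critical point. Then I would differentiate: $f'(x) = e^{-ax} - ax\cdot e^{-ax} = e^{-ax}(1 - ax)$. Since $e^{-ax} > 0$, the sign of $f'(x)$ is the sign of $1 - ax$, so $f$ is strictly increasing on $[0, 1/a]$ and strictly decreasing on $[1/a, \infty)$. Hence the unique maximum occurs at $x = 1/a$, where
\[f(1/a) = \frac{1}{a} \cdot \exp(-a \cdot 1/a) = \frac{1}{a} \cdot e^{-1} = \frac{1}{k-r\ell} \cdot \frac{1}{e}.\]
Therefore $f(x) \leq \tfrac{1}{k-r\ell}\cdot\tfrac{1}{e}$ for all $x \in [0, \infty)$, as required.

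There is essentially no obstacle here; the only thing to be careful about is to record that $k - r\ell > 0$ so that all the quantities make sense and the exponential decay argument applies. An alternative, slicker route is to substitute $y = ax$, reducing the inequality to the well-known $y e^{-y} \leq e^{-1}$ for $y \geq 0$, which follows immediately from the same critical point analysis applied to $g(y) = ye^{-y}$ (or from the concavity-based inequality $1 + y \leq e^{y}$ applied with $y$ replaced by $y - 1$, giving $y \leq e^{y-1}$, i.e., $y e^{-y} \leq e^{-1}$). Either presentation yields the lemma in a couple of lines.
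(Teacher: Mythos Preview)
Your proof is correct and is essentially identical to the paper's: both compute the derivative of $x\exp(-(k-r\ell)x)$, locate the unique critical point at $x=1/(k-r\ell)$, and evaluate there to obtain the bound. The only cosmetic differences are your abbreviation $a=k-r\ell$ and the optional alternative via $ye^{-y}\le e^{-1}$, neither of which changes the argument.
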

\begin{proof}
The function $\varphi(x)=x\cdot \exp(-(k-r\l)\cdot x)$ for $x\in [0,\infty)$ is non-negative, satisfies $\varphi(0)=0$ and tends to zero for $x\to \infty$ (as $k-r\l>0$). Note that $\varphi$ has derivative
\[\varphi'(x)=\exp(-(k-r\l)\cdot x)-(k-r\l)\cdot x\cdot \exp(-(k-r\l)\cdot x),\]
hence it has its unique maximum at $x=1/(k-r\l)$. So we indeed have \[x\cdot \exp(-(k-r\l)\cdot x)=\varphi(x)\leq \varphi\left(\frac{1}{k-r\l}\right)= \frac{1}{k-r\l}\cdot \frac{1}{e}\]
for all $x\in [0,\infty)$.
\end{proof}

Applying Lemma \ref{lemma2} to $x=C$, we obtain
\[\frac{\exp((k-r\l)\cdot C)}{C}=\frac{1}{C\cdot \exp(-(k-r\l)\cdot C)}\geq (k-r\l)\cdot e.\]
Thus,
\begin{multline*}
\sum_{\text{labelings }v_1,\dots, v_{k-1}\text{ of } A'}\rho(v_1,\dots,v_k)\geq \frac{(k-1)!}{m(A)\cdot n^k}\cdot \frac{\exp\left((k-r\l)\cdot C\right)}{C}\\
\geq \frac{(k-1)!}{m(A)\cdot n^k}\cdot (k-r\l)\cdot e=\frac{1}{m(A)} \cdot \frac{k-r\l}{k}\cdot e\cdot \frac{k!}{n^k},
\end{multline*}
which proves (\ref{inequality}). This finishes the proof of Theorem \ref{thm-hyper-1-e}.

\section{Proof of Theorems \ref{thm-graph-o1}, \ref{thm-hyper-o1} and \ref{thm-graph-forest}}\label{sect-proof-o1}

The proofs of Theorem \ref{thm-graph-o1}, \ref{thm-hyper-o1} and \ref{thm-graph-forest} are based on the following three lemmas.

\begin{lemma}\label{propo-1}Let $r$, $k$ and $\l$ be positive integers and let $G$ be a hypergraph on $n \geq k$ vertices all of whose edges have size at most $r$. Then for each real number $c$ with $0<c<\sqrt{k}/2$, the following holds: The number of $k$-vertex subsets $A\su V(G)$ with $e(A)=\l$ and $c\leq m(A)\leq \sqrt{k}/2$ is at most
\[32\cdot \sqrt{r}\cdot \frac{1}{\sqrt{c}}\cdot \frac{n^k}{k!}.\]
\end{lemma}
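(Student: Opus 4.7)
The plan is to partition the $k$-subsets $A$ satisfying the hypothesis according to the exact value $m=m(A)$, bound the count $N_m$ for each $m\in\{\lceil c\rceil,\dots,\lfloor \sqrt{k}/2\rfloor\}$, and then sum. For each such $A$, the set $B=B(A)\su A$ of non-isolated vertices has $\vert B\vert = m$, $e(B)=\l$, and no isolated vertex in $B$; the remaining $k-m$ vertices lie in $V(G)\sm B$. Hence, letting $T_m$ denote the number of $m$-subsets $B\su V(G)$ with $e(B)=\l$ and no isolated vertex in $B$, the count $N_m$ satisfies
\[N_m \leq T_m \binom{n-m}{k-m}.\]
Using $\binom{n-m}{k-m}\leq (k/n)^m \cdot n^k/k!$, the task reduces to showing
\[\sum_{m=\lceil c\rceil}^{\lfloor \sqrt{k}/2\rfloor} T_m\, (k/n)^m \leq 32\sqrt{r/c}.\]

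To bound $T_m$, I would exploit that $B$ is the union of its $\l$ edges. Choosing an ordering of these edges and inserting them sequentially, let $s_i\in\{0,1,\dots,r\}$ be the number of vertices of $e_i$ not already in $e_1\cup\dots\cup e_{i-1}$; then $\sum_i s_i = m$. The number of ordered tuples with composition $(s_1,\dots,s_\l)$ is at most $\prod_i n^{s_i} m^{r-s_i}$ up to an $O_r(1)$ factor, since each new vertex contributes a factor of $n$ and each reused vertex is chosen from the at most $m$ previously placed ones; dividing by the $\l!$ overcount and summing over compositions gives a bound on $T_m$. Since $\sum_{m\geq c} m^{-3/2}=O(c^{-1/2})$, the target inequality above is implied by a bound of the form $T_m(k/n)^m = O(\sqrt{r}\cdot m^{-3/2})$.

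The main obstacle is to obtain this bound on $T_m$ with the correct $\sqrt{r}$ constant. A naive $T_m\leq \binom{n}{m}$ gives $T_m(k/n)^m \approx k^m/m!$, which is far too large for small $m$; the argument must crucially use $m\leq \sqrt{k}/2$ together with the fact that $B$ contains $\l$ edges whose union is exactly $B$, so that many edges are forced to reuse already-chosen vertices. Controlling the interaction between the composition count and the reused-vertex factors sharply enough to pin down the $\sqrt{r/c}$ dependence is the delicate step. Should the direct counting prove unwieldy, an alternative I would consider is to adapt the good-sequence framework of Theorem \ref{thm-hyper-1-e}: assign weights $\lambda(v_1,\dots,v_k)$ that incorporate $m(A)$, and reapply the AM-GM step at the appropriate place so that the resulting lower bound on $\rho$ is directly sensitive to $m(A)\geq c$, thereby converting the identity $\sum\rho=1$ into an upper bound of the desired form.
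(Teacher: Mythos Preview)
Your decomposition $N_m\le T_m\binom{n-m}{k-m}$ loses exactly the structural constraint that makes the lemma true, and cannot be repaired by any bound on $T_m$ alone. The $k-m$ additional vertices of $A$ are not merely in $V(G)\setminus B$; they must be vertices that create no new edge when added to $B$ (otherwise $e(A)>\ell$ or $m(A)>m$). Ignoring this yields bounds that are exponentially too weak. Concretely, let $G$ consist of a complete $r$-uniform hypergraph on a set $X$ of $n/2$ vertices together with $n/2$ isolated vertices, and take $\ell=\binom{m}{r}$. Then every $m$-subset of $X$ is a valid $B$, so $T_m=\binom{n/2}{m}$ and
\[
T_m\,(k/n)^m \;\ge\; \frac{(n/2-m)^m}{m!}\cdot\frac{k^m}{n^m}\;\sim\;\frac{(k/2)^m}{m!},
\]
which at $m=\lfloor\sqrt{k}/2\rfloor$ is of order $(e\sqrt{k})^{\sqrt{k}/2}$, not $O(m^{-3/2})$. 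Yet the lemma is perfectly fine in this example, precisely because the $k-m$ extra vertices must come from the isolated half of $V(G)$, a restriction your inequality throws away. Your edge-insertion bound on $T_m$ is beside the point: even the trivial $T_m\le\binom{n}{m}$ is essentially attained here.

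The paper's argument is structurally different. It does \emph{not} take $B$ to be the set of non-isolated vertices of $A$. Instead it chooses $B\subseteq A$ randomly (each vertex kept with probability $1-\tfrac{1}{8r}$) and shows that with probability at least $1/4$ the pair $(A,B)$ is ``$z$-nice'' for $z=c/(32r)$: roughly, $h(A,B)$---the number of vertices in $A\setminus B$ connected to $B$---lies in $[z,\sqrt{k}/2]$, while $|A\setminus B|$ is large and $f(A,B)$ is small. The crux is then Lemma~\ref{lemma-B-nice-bound}: for a \emph{fixed} $B$, the number of $A$ with $(A,B)$ $z$-nice is at most $\tfrac{4}{3}z^{-1/2}\cdot n^a/a!$ where $a=k-|B|$. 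This is proved by a sequential-choice argument in which the vertices of $A\setminus B$ are labelled so that the first $a-s$ are not connected to $B$, the next $h$ are, and the last $s-h$ are isolated in $A$; at each step the number of eligible vertices is at most $X$ or $n-X$ according to whether a vertex connected to $B$ is required or forbidden, and the inequality $X^h(n-X)^{a-h}\le h^h(a-h)^{a-h}(n/a)^a$ together with Stirling turns this into the stated bound. The factor $1/\sqrt{z}$ arises from $\binom{a}{h}^{-1}\cdot a^a/(h^h(a-h)^{a-h})\approx\sqrt{h(a-h)/a}\ge\sqrt{z/2}$, not from summing $m^{-3/2}$. If you want to salvage your outline, the missing ingredient is precisely this: for a fixed $B$, control the number of valid completions using the dichotomy ``connected to $B$'' versus ``not connected to $B$'', rather than bounding by $\binom{n-m}{k-m}$.
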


\begin{lemma}\label{propo-2}Let $r$, $k$ and $\l$ be positive integers  and let $G$ be a hypergraph on $n \geq k$ vertices all of whose edges have size at most $r$. Then for each real number $c$ satisfying $\sqrt{k}/2\leq c\leq k/(32r)$, the following holds: The number of $k$-vertex subsets $A\su V(G)$ with $e(A)=\l$ and $c\leq m(A)\leq 2c$ is at most
\[44\cdot \sqrt{r}\cdot k^{-1/4}\cdot \frac{n^k}{k!}.\]
\end{lemma}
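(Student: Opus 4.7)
The plan is to adapt the weighted random-labeling framework (good sequences and $\rho$-weights) from the proof of Theorem \ref{thm-hyper-1-e}. Following Lemma \ref{lemma1}, the total $\rho$-mass over good sequences equals $1$, so to bound the number $N$ of $A$ with $e(A)=\ell$ and $m(A)\in[c,2c]$ it suffices to prove that each such $A$ carries $\rho$-mass at least $\Omega(\sqrt{r}^{-1}\,k^{1/4})\cdot k!/n^k$. Dividing $1$ by this lower bound then yields the claimed count.

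To obtain a sharper per-$A$ lower bound, I would exploit two features specific to the regime $c\leq m(A)\leq 2c$ with $\sqrt{k}/2\leq c\leq k/(32r)$. First, the upper bound $m(A)\leq 2c\leq k/(16r)$ forces at least $k(1-1/(16r))$ vertices of $A$ to be isolated in $A$. Re-running the $\Lambda$-estimation from the proof of Theorem \ref{thm-hyper-1-e} (where $\Lambda(v_1,\ldots,v_j)\leq n\cdot\exp(-\sum_{\mathcal{X}}c(\mathcal{X}))$ whenever $B\subseteq\{v_1,\ldots,v_j\}$), but now iterating over the roughly $k$ indices $j$ for which the condition $B\subseteq\{v_1,\ldots,v_j\}$ holds, one can replace the exponent-coefficient $(k-r\ell)$ in the lower bound $\exp((k-r\ell)C)/C$ by a quantity of the same order as $k$ itself. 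Second, the lower bound $m(A)\geq c\geq\sqrt{k}/2$ provides at least $\sqrt{k}/2$ non-isolated choices of $v_k$; I would use this via a Cauchy--Schwarz / AM--GM averaging over these $v_k$, gaining an extra factor of $\sqrt{m(A)}\geq k^{1/4}$ in the per-$A$ estimate. Combining the two refinements, summing via Lemma \ref{lemma1}, and using $c\leq k/(32r)$ would deliver the bound $44\sqrt{r}\cdot k^{-1/4}\cdot n^k/k!$ after some routine book-keeping.

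The main obstacle will be the Cauchy--Schwarz / averaging step. In the proof of Theorem \ref{thm-hyper-1-e}, summing the per-$v_k$ estimate $(k-1)!/(m(A)\,n^k)\cdot \exp((k-r\ell)C_{v_k})/C_{v_k}$ over the $m(A)$ non-isolated choices of $v_k$ causes the $1/m(A)$ factor to cancel exactly, producing no gain from a large non-isolated part. Recovering the missing $\sqrt{m(A)}$ factor requires either leveraging a nontrivial spread of the parameters $C_{A,v_k}$ across different $v_k\in A$, or running a two-level averaging in which one first fixes the common structure $B$ of non-isolated vertices and then averages over independently weighted insertions of isolated vertices. Making this work uniformly, so that the final bound is independent of $\ell$ (as demanded by the statement of Lemma \ref{propo-2}), and matching the constant $44\sqrt{r}$, is likely the most delicate part of the argument.
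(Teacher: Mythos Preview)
Your proposal takes a different route from the paper and, as you yourself anticipate, it does not close. The paper does \emph{not} push the $\rho$-weight framework of Section~\ref{sect-hyper-1-e} further; instead it introduces a separate ``$z$-nice pair'' mechanism (Definition~\ref{defi-nice}). One samples a random subset $B\subseteq A$ with a carefully chosen retention probability $p=1-\tfrac{1}{16r}\sqrt{k}\,c^{-1}$, shows (Lemma~\ref{lemma-A-2}) that each target $A$ produces a $z$-nice pair with probability $\ge 1/4$ for $z=\tfrac{1}{64r}\sqrt{k}$, and then, for each fixed $B$, bounds the number of $A$ completing it to a $z$-nice pair (Lemma~\ref{lemma-B-nice-bound}). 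The $k^{-1/4}$ saving enters through a Stirling-type estimate on $\binom{a}{h}^{-1}$ in the tame-sequence count, using $h\ge z=\Theta(\sqrt{k}/r)$. This is where the power of $k$ actually materialises, and nothing analogous is present in your outline.

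The gap in your plan is exactly the step you flag. In the $\rho$-framework, the per-$A$ mass one can extract is of order $k!/n^k$ up to bounded constants: the $1/m(A)$ weight on $\lambda(v_1,\dots,v_k)$ is engineered to cancel against the $m(A)$ choices of $v_k$, and the optimisation $C\mapsto e^{\alpha C}/C$ saturates at $1/(\alpha e)$ with no residual factor in $m(A)$ or $k$. Neither of your two suggested fixes is viable as stated. A ``spread of the parameters $C_{A,v_k}$'' need not exist: for a given $A$ the quantities $C_{v_k}$ can all be equal (e.g.\ when the non-isolated part of $A$ is vertex-transitive under the relevant symmetry), so no convexity gain is available in general. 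And a ``two-level averaging over $B$'' is essentially the paper's idea, but to make it yield $k^{1/4}$ one must leave the $\rho$-framework and argue about completions of $B$ directly, which is precisely the tame-sequence argument of Section~\ref{sect-proof-propo-1-2} rather than a refinement of Section~\ref{sect-hyper-1-e}. In short, the $\rho$-weights are calibrated to hit $1/e$, not $k^{-1/4}$; obtaining the latter requires the different conditioning-on-$B$ mechanism the paper uses.
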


\begin{lemma}\label{propo-3}Let $r$, $k$ and $\l$ be positive integers and let $G$ be a hypergraph on $n \geq k$ vertices all of whose edges have size at most $r$. Then for each real number $\eps$ with $0<\eps<1/2$, the following holds: The number of $k$-vertex subsets $A\su V(G)$ with $e(A)=\l$ and $\eps k \leq m(A)\leq (1-\eps) k$ is at most
\[8\cdot r^{1/4}\cdot \eps^{-1/2}\cdot k^{-1/4}\cdot \frac{n^k}{k!}.\]
\end{lemma}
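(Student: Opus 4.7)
The plan is to bound $|\mathcal{A}|$, where $\mathcal{A}=\{A\su V(G):|A|=k,\ e(A)=\l,\ \eps k\le m(A)\le(1-\eps)k\}$, by reducing to a count of "non-isolated cores" and then invoking a Theorem \ref{thm-hyper-1-e}-style weighted counting argument in a regime where Theorem \ref{thm-hyper-1-e} itself does not directly apply. For $A\in\mathcal{A}$ write $B=B(A)$ for its set of non-isolated vertices (size $m=m(A)$) and $I=A\sm B$ for its isolated part (size $k-m\ge\eps k$). The first step is to observe that removing an isolated vertex $v\in I$ preserves both $e(A)=\l$ and the entire non-isolated part $B$; double-counting pairs $(A,v)$ with $v\in I(A)$ and using $|V(G)\sm A'|\le n$ for the reverse yields the recursion $(k-m)|\mathcal{A}_k^{\l,m}|\le n\,|\mathcal{A}_{k-1}^{\l,m}|$. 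Iterating all the way down to size $m$ gives
\[
|\mathcal{A}_k^{\l,m}|\;\le\;\frac{n^{k-m}}{(k-m)!}\,N_{\l,m},
\]
where $N_{\l,m}$ counts $B\su V(G)$ of size $m$ with $e(B)=\l$ and all vertices non-isolated in $B$.

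The second step is to bound $N_{\l,m}$. Because every vertex of $B$ is non-isolated and each edge has size $\le r$, we have $m\le r\l$, so the hypothesis $\l<m/r$ needed for Theorem \ref{thm-hyper-1-e} fails and its bound cannot be quoted as a black box. Instead I would mimic the good-sequence setup from Section \ref{sect-hyper-1-e}: ordered tuples $(v_1,\dots,v_m)$ ending at a non-isolated vertex, with weights $\rho$ summing to $1$ via an analogue of Lemma \ref{lemma1}. The key point is that in the regime $m\le r\l$ there is considerable slack in the AM-GM step of Theorem \ref{thm-hyper-1-e}, which should be replaced by a Cauchy-Schwarz-type inequality and combined with Lemma \ref{lemma2} applied at a carefully chosen parameter to gain an extra factor of order $k^{-1/4}$ over the crude $O(1)$ bound.

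The third step is to substitute the resulting bound on $N_{\l,m}$ into Step 1, divide by $n^k/k!$ to get $|\mathcal{A}_k^{\l,m}|\cdot k!/n^k\le \frac{(k)_m}{n^m}\,N_{\l,m}$, and sum over $m\in[\eps k,(1-\eps)k]$. The factor $\eps^{-1/2}$ should arise from the fact that the peak of the summand is attained near the endpoints of this range (where the iteration cost $(k)_m/n^m$ and the core bound balance), and the effective width of the peak is $\sqrt{\eps k}$; the factor $k^{-1/4}$ comes from the improved bound on $N_{\l,m}$ in Step 2, and $r^{1/4}$ tracks the bookkeeping on edge sizes.

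The main obstacle is Step 2: adapting the Theorem \ref{thm-hyper-1-e} weighting so that it produces a useful bound in the "dense core" regime $\l\ge m/r$. The weighting $\lambda(v_1,\dots,v_k)=1/m(\{v_1,\dots,v_k\})$ used there is tailored to the opposite (sparse) regime, and the proof of the per-$A$ lower bound on $\sum\rho$ uses AM-GM in a way that loses exactly the factor we need to recover. Reworking this step, and then tracking the resulting estimate through the iteration and the sum over $m$ with constants sharp enough to give $\eps^{-1/2}$ rather than $\eps^{-1}$, is where the bulk of the technical work will lie.
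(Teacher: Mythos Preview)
Your reduction in Step 1 is correct, but it is far too lossy for the rest of the plan to have any chance of succeeding. Writing $N_{\l,m}=\alpha_m\cdot n^m/m!$, your Step 1 bound becomes
\[
|\mathcal{A}_k^{\l,m}|\cdot\frac{k!}{n^k}\;\le\;\frac{(k)_m}{n^m}\,N_{\l,m}\;=\;\alpha_m\binom{k}{m}.
\]
For $m\in[\eps k,(1-\eps)k]$ the factor $\binom{k}{m}$ is at least $\binom{k}{\lfloor\eps k\rfloor}$, which is exponential in $k$. To reach the target $8r^{1/4}\eps^{-1/2}k^{-1/4}$ you would therefore need $\alpha_m$ to be exponentially small in $k$, i.e.\ $N_{\l,m}$ exponentially smaller than $n^m/m!$. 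No adaptation of the good-sequence machinery of Section~\ref{sect-hyper-1-e} can produce that: the argument there compares to $n^m/m!$ and wins only constant or polynomial factors, and indeed $N_{\l,m}$ can equal $\binom{n}{m}$ (take $G$ the complete $r$-uniform hypergraph and $\l=\binom{m}{r}$). Your suggested ``Cauchy--Schwarz in place of AM--GM'' could at best squeeze out a further polynomial factor, not the required $\binom{k}{m}^{-1}$. The difficulty is not a sharpness issue in Step~2; it is that Step~1 throws away the information that the $k-m$ added vertices must be isolated in $A$, and that information is exactly what carries the weight in this regime.

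The paper's proof proceeds quite differently. Instead of the deterministic split into the non-isolated core and the isolated part, it takes a \emph{random} subset $B\su A$ with each vertex retained independently with probability $p=1-\tfrac12 r^{-1/2}k^{-1/2}$, so that $|A\sm B|$ is only of order $\sqrt{k/r}$. One shows that with probability at least $1/2$ the pair $(A,B)$ is ``$\eps$-pleasant'' (Definition~\ref{defi-pleasant}): every vertex of $A\sm B$ is either connected to $B$ or isolated in $A$, and both types occur at least $\tfrac14 r^{-1/2}\eps\sqrt{k}$ times. For a fixed $B$ one then bounds the number of $A$ making $(A,B)$ $\eps$-pleasant by labelling $A\sm B$ as a ``tidy sequence'' (connected vertices first, then isolated ones) and running a random sequential selection argument; Stirling's formula turns the resulting $h!(a-h)!\cdot a^a/(h^h(a-h)^{a-h})$ into the factor $\sqrt{h(a-h)/a}\gtrsim \eps^{1/2}k^{1/4}r^{-1/4}$, which is where the $r^{1/4}\eps^{-1/2}k^{-1/4}$ in the lemma comes from. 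The crucial point is that $|A\sm B|$ is small, so the combinatorial loss $\binom{k}{m}$ that kills your approach never appears.
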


We will prove these three lemmas in the following two sections.

Lemmas \ref{propo-1}, \ref{propo-2} and \ref{propo-3} imply the following corollary, from which we will deduce Theorems \ref{thm-graph-o1} and \ref{thm-hyper-o1}. Corollary \ref{coro-propo-o1} will also be used again in the proof of Theorem \ref{thm-graph-1-e} in Section \ref{sect-proof-graph-1-e}.

\begin{corollary}\label{coro-propo-o1}Let us fix a positive integer $r$ and a real number $\eps>0$. Assume that $k$ is an integer which is sufficiently large with respect to $\eps$. Let $\l$ be a positive integer and $G$ a hypergraph on $n \geq k$ vertices all of whose edges have size at most $r$. Then for each real number $c'>0$, there are at most
\[\left(32\cdot \sqrt{r}\cdot \frac{1}{\sqrt{c'}}+23 \cdot \sqrt{r}\cdot k^{-1/4}\cdot \log k\right)\cdot \frac{n^k}{k!} \]
$k$-vertex subsets $A\su V(G)$ with $e(A)=\l$ and $c' \leq m(A)\leq (1-\eps) k$.
\end{corollary}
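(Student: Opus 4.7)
The plan is to partition the $k$-vertex subsets $A\su V(G)$ with $e(A)=\l$ and $c' \le m(A) \le (1-\eps)k$ into three groups according to the value of $m(A)$, and to apply Lemmas \ref{propo-1}, \ref{propo-2}, and \ref{propo-3} to each group respectively. For the first sub-range $c' \le m(A) \le \sqrt{k}/2$ (which is empty unless $c' < \sqrt{k}/2$), Lemma \ref{propo-1} applied with $c=c'$ directly gives an upper bound of $32\sqrt{r}/\sqrt{c'} \cdot n^k/k!$.

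For the middle sub-range $\sqrt{k}/2 \le m(A) \le k/(32r)$, I would cover it by the dyadic intervals $[c_i, 2c_i]$, where $c_i = 2^i \cdot \sqrt{k}/2$, for $i = 0,1,\dots,t$ with $t = \lceil \log_2(\sqrt{k}/(32r)) \rceil$. Each $c_i$ then lies in $[\sqrt{k}/2, k/(32r)]$, so Lemma \ref{propo-2} applies to each interval and contributes at most $44 \sqrt{r} \cdot k^{-1/4} \cdot n^k/k!$ to the count. Since $r \ge 1$, the number $t+1$ of intervals is at most $(1/2)\log k$ (for $k$ large enough that $\sqrt{k}/2 \le k/(32r)$), giving a total contribution of at most $22\sqrt{r}\cdot k^{-1/4}\log k \cdot n^k/k!$. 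For the high sub-range $k/(32r) \le m(A) \le (1-\eps)k$, I would set $\eps' = \min(\eps, 1/(32r))$; this sub-range lies inside $[\eps' k, (1-\eps')k]$, so Lemma \ref{propo-3} applied with parameter $\eps'$ gives at most $8 r^{1/4}(\eps')^{-1/2} k^{-1/4} \cdot n^k/k!$ sets $A$. Since $r$ and $\eps$ are fixed and $\eps' = \min(\eps, 1/(32r))$ is thus bounded below by a positive constant depending only on $r$ and $\eps$, this contribution is at most $\sqrt{r}\cdot k^{-1/4}\log k \cdot n^k/k!$ for all $k$ sufficiently large with respect to $\eps$ and $r$.

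Summing the three bounds yields the desired $(32\sqrt{r}/\sqrt{c'} + 23\sqrt{r} k^{-1/4}\log k)\cdot n^k/k!$. There is no deep obstacle here; the argument is essentially a careful aggregation of the three lemmas. The only points requiring attention are (a) verifying that the three sub-ranges jointly cover $[c',(1-\eps)k]$, which holds because the choice $\eps' \le 1/(32r)$ makes the Lemma \ref{propo-3} range reach down to $\eps' k \le k/(32r)$; (b) bounding the number of dyadic intervals by $(1/2)\log k$, which uses $r\ge 1$; and (c) absorbing the Lemma \ref{propo-3} contribution into the $\sqrt{r} k^{-1/4}\log k$ term, for which one simply takes $k$ large relative to $\eps$ and $r$ so that $\log k \ge 8(\eps')^{-1/2}/r^{1/4}$.
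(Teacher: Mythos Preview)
Your proposal is correct and follows essentially the same three-range decomposition as the paper. The only cosmetic difference is where you place the boundary between the second and third groups: the paper first assumes without loss of generality that $\eps<1/(32r)$ and then splits at $\eps k$, whereas you split at $k/(32r)$ and introduce $\eps'=\min(\eps,1/(32r))$ for the application of Lemma~\ref{propo-3}; these are equivalent maneuvers and the arithmetic goes through identically.
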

\begin{proof}We may assume without loss of generality that $\eps<1/(32r)<1/2$, because otherwise we can just make $\eps$ smaller.

Let us divide the $k$-vertex subsets $A\su V(G)$ with $e(A)=\l$ and $c' \leq m(A)\leq (1-\eps) k$ into three groups. The first group consists of those subsets $A$ with $c' \leq m(A)< \sqrt{k}/2$, the second group of those with $\sqrt{k}/2\leq m(A)< \eps k$ and finally the third group of those with $\eps k\leq m(A)\leq (1-\eps) k$. We will bound the number of $k$-vertex subsets in each of these three groups separately, in each case using one of the three lemmas above.

For the first group, we claim that the number of $k$-vertex subsets $A\su V(G)$ with $e(A)=\l$ and $c'\leq m(A)<\sqrt{k}/2$ is at most
\[32\cdot \sqrt{r}\cdot \frac{1}{\sqrt{c'}}\cdot \frac{n^k}{k!}.\]
If $c'\geq \sqrt{k}/2$, then there cannot be any such $k$-vertex subset $A\su V(G)$, so this claim is trivially true. If $c'< \sqrt{k}/2$, then the claim follows directly from Lemma \ref{propo-1}.

For the second group, we claim that the number of $k$-vertex subsets $A\su V(G)$ with $e(A)=\l$ and $\sqrt{k}/2\leq m(A)< \eps k$ is at most
\[22 \cdot \sqrt{r}\cdot k^{-1/4}\cdot \log k\cdot \frac{n^k}{k!}.\]
As $k$ is sufficiently large with respect to $\eps$, we may assume that $\eps\sqrt{k}\geq 2$. Let us apply Lemma \ref{propo-2} several times with different values of $c$. More precisely, let us take
\[c_j=\frac{\sqrt{k}}{2}\cdot 2^j,\]
for $j=0,1,\dots,\lceil \log (\eps \sqrt{k})\rceil$. For each of these $j$, we have $c_j\geq \sqrt{k}/2$ and also
\[c_j\leq \frac{\sqrt{k}}{2}\cdot 2^{\lceil \log (\eps \sqrt{k})\rceil}\leq \frac{\sqrt{k}}{2}\cdot 2^{\log (\eps \sqrt{k})+1}=\sqrt{k}\cdot (\eps \sqrt{k})=\eps k<\frac{k}{32r},\]
where we used the assumption $\eps<1/(32r)$. Thus, all these values $c_j$  satisfy the assumptions of Lemma \ref{propo-2}.  Thus, by Lemma \ref{propo-2}, for each $j=0,1,\dots,\lceil \log (\eps \sqrt{k})\rceil$ the number of $k$-vertex subsets $A\su V(G)$ with $e(A)=\l$ and $(\sqrt{k}/2)\cdot 2^j=c_j\leq m(A)\leq 2c_j=(\sqrt{k}/2)\cdot 2^{j+1}$ is at most
\[44\cdot \sqrt{r}\cdot k^{-1/4}\cdot \frac{n^k}{k!}.\]
Adding this up for all $j=0,1,\dots,\lceil \log (\eps \sqrt{k})\rceil$, we see that the total number of $k$-vertex subsets $A\su V(G)$ with $e(A)=\l$ and $\sqrt{k}/2\leq m(A)< (\sqrt{k}/2)\cdot 2^{\log (\eps \sqrt{k})+1}=\eps k$ is at most
\[\left(\lceil\log (\eps \sqrt{k})\rceil+1\right)\cdot 44\cdot \sqrt{r}\cdot k^{-1/4}\cdot \frac{n^k}{k!}< \frac{1}{2}\log k\cdot 44\cdot \sqrt{r}\cdot k^{-1/4}\cdot \frac{n^k}{k!}=22 \cdot \sqrt{r}\cdot k^{-1/4}\cdot \log k\cdot \frac{n^k}{k!},\]
where we used that $\lceil\log (\eps \sqrt{k})\rceil+1< \log (\eps \sqrt{k})+2\leq \log (\sqrt{k}/32)+2<\log \sqrt{k}=\frac{1}{2}\log k$. This indeed proves our claim about the second group.

Finally, for the third group, Lemma \ref{propo-3} implies that the number of $k$-vertex subsets $A\su V(G)$ with $e(A)=\l$ and $\eps k\leq m(A)\leq (1-\eps)k$ is at most
\[8\cdot r^{1/4}\cdot \eps^{-1/2}\cdot k^{-1/4}\cdot \frac{n^k}{k!}\leq \sqrt{r}\cdot k^{-1/4}\cdot \log k\cdot \frac{n^k}{k!}.\]
Here we used that $k$ is sufficiently large with respect to $\eps$, so that $\log k\geq 8\cdot \eps^{-1/2}$.

All in all, adding up our bounds for each of the three groups, we obtain that the total number of $k$-vertex subsets $A\su V(G)$ with $e(A)=\l$ and $c' \leq m(A)\leq (1-\eps) k$ is at most
\[\left(32\cdot \sqrt{r}\cdot \frac{1}{\sqrt{c'}}+23 \cdot \sqrt{r}\cdot k^{-1/4}\cdot \log k\right)\cdot \frac{n^k}{k!},\]
as desired.
\end{proof}

In order to deduce Theorem \ref{thm-graph-o1} from Corollary \ref{coro-propo-o1}, we will use the following claim.

\begin{claim}\label{claim-mA-bounds-graph} For every graph $G$ and every subset $A\su V(G)$ of its vertex set, we have
\[\sqrt{2e(A)}\leq m(A)\leq 2e(A).\]
\end{claim}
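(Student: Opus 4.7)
The plan is to verify the two inequalities by elementary double counting, using only the definitions of $e(A)$ and $m(A)$ for a graph.

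For the upper bound $m(A) \leq 2e(A)$, I would count incidences between non-isolated vertices of $A$ and edges contained in $A$. Every edge $e \subseteq A$ contributes exactly $2$ incidences (since $|e| = 2$ in a graph), so the total number of such incidences equals $2e(A)$. On the other hand, by definition, every non-isolated vertex of $A$ lies in at least one edge $e \subseteq A$, so it contributes at least $1$ incidence. Hence $m(A) \leq 2e(A)$.

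For the lower bound $\sqrt{2e(A)} \leq m(A)$, let $B \subseteq A$ be the set of non-isolated vertices of $A$, so $|B| = m(A)$. Every edge $e \subseteq A$ has both its endpoints non-isolated in $A$ (each endpoint is contained in the edge $e \subseteq A$), so in fact $e \subseteq B$. Therefore $e(A) = e(B) \leq \binom{|B|}{2} = \binom{m(A)}{2} \leq m(A)^2 / 2$, which rearranges to $m(A) \geq \sqrt{2e(A)}$.

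Neither step is difficult; there is no real obstacle here, just the observation that in the graph case every edge has exactly two endpoints and both lie in $B$. The same argument would extend, with the obvious modifications, to a hypergraph with edges of size at most $r$, giving $(r!\,e(A))^{1/r}\leq m(A)\leq r\cdot e(A)$, but for the present claim the simple form above suffices.
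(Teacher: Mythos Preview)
Your proof is correct and matches the paper's argument essentially verbatim: both bounds are obtained by observing that each edge in $A$ has exactly two endpoints among the $m(A)$ non-isolated vertices, giving $m(A)\leq 2e(A)$ from incidence counting and $e(A)\leq \binom{m(A)}{2}\leq m(A)^2/2$ for the other direction.
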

\begin{proof}The inequality $m(A)\leq 2e(A)$ follows directly from the definition of $m(A)$, because every vertex that is non-isolated in $A$ needs to be incident with at least one of the $e(A)$ edges within $A$.

For the other inequality, note that the $e(A)$ edges inside $A$ are all between the $m(A)$ non-isolated vertices. Therefore we must have
\[e(A)\leq {m(A) \choose 2}\leq \frac{m(A)^2}{2},\]
which implies $m(A)\geq \sqrt{2e(A)}$.
\end{proof}

Now we are ready to prove Theorem \ref{thm-graph-o1}

\begin{proof}[Proof of Theorem  \ref{thm-graph-o1}]Let $\eps>0$ be a fixed real number and let $k$ be sufficiently large (with respect to $\eps$). Furthermore, let $\l$ be an integer with $1 \leq \l\leq (1-\eps)k/2$.

Now, consider any graph $G$ on $n\geq k$ vertices. By Claim \ref{claim-mA-bounds-graph}, for any subset $A\su V(G)$ with $e(A)=\l$ we have $\sqrt{2\l}\leq m(A)\leq 2\l\leq  (1-\eps)k$. Thus, by Corollary \ref{coro-propo-o1} applied to $r=2$ and $c'=\sqrt{2l}$ the number of subsets $A\su V(G)$ with $e(A)=\l$ is at most
\[\left(32\cdot \sqrt{2}\cdot (2\l)^{-1/4}+23 \cdot \sqrt{2}\cdot k^{-1/4}\cdot \log k\right)\cdot \frac{n^k}{k!}\leq \left(48\cdot \l^{-1/4}+36 \cdot k^{-1/4}\cdot \log k\right)\cdot (1+o_n(1))\cdot {n\choose k}.\]
Here we used $\sqrt{2}\leq 3/2$. Thus, we can conclude that
\[\ind(k,\l)\leq 48\cdot \l^{-1/4}+36 \cdot k^{-1/4}\cdot \log k.\]

If $1\leq \l\leq k/\log^4 k$, then $\l^{-1/4}\geq k^{-1/4}\cdot \log k$ and so we can conclude
\[\ind(k,\l)\leq 48\cdot \l^{-1/4}+36 \cdot k^{-1/4}\cdot \log k\leq 90\cdot \l^{-1/4}.\]
On the other hand, if $k/\log^4 k \leq \l\leq (1-\eps)k/2$, then $\l^{-1/4}\leq k^{-1/4}\cdot \log k$ and we obtain
\[\ind(k,\l)\leq 48\cdot \l^{-1/4}+36 \cdot k^{-1/4}\cdot \log k\leq 90\cdot k^{-1/4}\cdot \log k.\] 
This finishes the proof of Theorem \ref{thm-graph-o1}.
\end{proof}

Now, let us turn to proving Theorem \ref{thm-hyper-o1}. Here, we will use the following claim.

\begin{claim}\label{claim-mA-bounds-hyper} Let $G$ be a hypergraph all of whose edges have size at most $r$. Then for every subset $A\su V(G)$ of its vertex set with $e(A)\geq 2^{2r}$, we have
\[\frac{1}{6}\cdot r\cdot e(A)^{1/r}\leq m(A)\leq r\cdot e(A).\]
\end{claim}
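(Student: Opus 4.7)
The upper bound $m(A) \le r \cdot e(A)$ is immediate from the definition of $m(A)$ and is in fact already recorded in the Notation section of the paper: every non-isolated vertex of $A$ lies in at least one edge $e \subseteq A$, each such edge has size at most $r$, and there are $e(A)$ such edges, so they collectively contain at most $r \cdot e(A)$ vertices.

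For the lower bound, my plan is to let $B \subseteq A$ denote the set of non-isolated vertices of $A$, so $|B| = m(A)$. Every edge of $G$ contained in $A$ is a non-empty subset of $B$ of size at most $r$, and hence
\[
e(A) \;\le\; \sum_{t=1}^{r}\binom{m(A)}{t}.
\]
The argument will then proceed in two short steps. First, I will rule out $m(A) < r$: if this held, the right-hand side above would be at most $2^{m(A)} - 1 \le 2^{r-1} - 1 < 2^{2r}$, contradicting the hypothesis $e(A) \ge 2^{2r}$. Second, with $m(A) \ge r$ in hand, I will invoke the standard inequality $\sum_{t=0}^{r}\binom{m}{t} \le (\mathrm{e}\cdot m / r)^{r}$ (valid for all $m \ge r \ge 1$, where $\mathrm{e} = 2.718\ldots$ is Euler's number), apply it to $m = m(A)$ to deduce $e(A) \le (\mathrm{e}\cdot m(A)/r)^{r}$, and take $r$-th roots to conclude
\[
m(A) \;\ge\; \frac{r}{\mathrm{e}}\cdot e(A)^{1/r} \;\ge\; \frac{r}{6}\cdot e(A)^{1/r},
\]
using $\mathrm{e} < 6$ (with plenty of room).

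I do not foresee any real obstacle. The only small point to keep in mind is notational: the symbol $e(A)$ for the edge count of $A$ should not be confused with Euler's number $\mathrm{e}$, and the bound $(\mathrm{e}\cdot m/r)^{r}$ on the partial binomial sum requires $m \ge r$, which is precisely why the preliminary step ruling out $m(A) < r$ is needed. The hypothesis $e(A) \ge 2^{2r}$ is in fact considerably stronger than what is used (only $e(A) \ge 2^{r}$ is needed to force $m(A) \ge r$), and likewise the constant $1/6$ in the conclusion is a comfortable weakening of the sharper $1/\mathrm{e}$ one would get from this argument.
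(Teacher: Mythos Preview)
Your proof is correct and follows essentially the same approach as the paper: both bound $e(A)$ by $\sum_{t=1}^{r}\binom{m(A)}{t}$, use the hypothesis $e(A)\ge 2^{2r}$ to rule out small $m(A)$, and then apply the standard bound $\binom{m}{r}\le (\mathrm{e}\,m/r)^r$. The only difference is that you invoke the partial-sum estimate $\sum_{t\le r}\binom{m}{t}\le(\mathrm{e}\,m/r)^r$ directly (needing only $m(A)\ge r$ and yielding the sharper constant $1/\mathrm{e}$), whereas the paper first forces $m(A)\ge 2r$, bounds the sum by $r\cdot\binom{m(A)}{r}\le 2^r\binom{m(A)}{r}$, and ends up with $1/(2\mathrm{e})$; both comfortably give the claimed $1/6$.
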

\begin{proof}Again, the inequality $m(A)\leq r\cdot e(A)$ follows directly from the definition of $m(A)$, because every vertex that is non-isolated in $A$ needs to be part of at least one of the $e(A)$ edges within $A$.

For the other inequality, note that the $e(A)$ edges inside $A$ are subsets of the set consisting of the $m(A)$ non-isolated vertices in $A$. Since all edges have size at least 1 and at most $r$, we obtain
\[e(A)\leq {m(A)\choose r}+{m(A)\choose r-1}+\dots+{m(A)\choose 1}.\]
If $m(A)<2r$, this yields $e(A)\leq 2^{m(A)}<2^{2r}$, a contradiction. Hence we must have $m(A)\geq 2r$. Then ${m(A)\choose r}$ is the largest of the binomial coefficients in the inequality above and we can conclude
\[e(A)\leq r\cdot {m(A)\choose r}\leq 2^r\cdot {m(A)\choose r}\leq 2^r\cdot\left(\frac{e\cdot m(A)}{r}\right)^{r}=\left(\frac{2e\cdot m(A)}{r}\right)^{r}\leq \left(\frac{6\cdot m(A)}{r}\right)^{r}.\]
Thus, $m(A)\geq  r\cdot e(A)^{1/r}/6$, as desired.
\end{proof}

Now it is easy to deduce Theorem \ref{thm-hyper-o1} from Corollary \ref{coro-propo-o1}.

\begin{proof}[Proof of Theorem  \ref{thm-hyper-o1}]Let $r\geq 3$ and $\eps>0$ be fixed and let $k$ be sufficiently large (with respect to $r$ and $\eps$). Furthermore, let $\l$ be an integer with $1 \leq \l\leq (1-\eps)k/r$. 

Note that we certainly have $\ind_{\leq r}(k,\l)\leq 1$, so the bound on $\ind_{\leq r}(k,\l)\leq 1$ claimed in Theorem  \ref{thm-hyper-o1} is trivial if $\l\leq 100^{2r}$. So let us from now on assume that $\l> 100^{2r}$

Consider any hypergraph $G$ on $n\geq k$ vertices all of whose edges have size at most $r$. By Claim \ref{claim-mA-bounds-hyper}, for any subset $A\su V(G)$ with $e(A)=\l$ we have $\frac{1}{6}\cdot r\cdot \l^{1/r}\leq m(A)\leq r\cdot \l\leq  (1-\eps)k$. Thus, by Corollary \ref{coro-propo-o1} applied to $c'=\frac{1}{6}\cdot r\cdot \l^{1/r}$ the number of subsets $A\su V(G)$ with $e(A)=\l$ is at most
\begin{multline*}\left(32\cdot \sqrt{r}\cdot \left(\frac{1}{6}\cdot r\cdot \l^{1/r}\right)^{-1/2}+23 \cdot \sqrt{r}\cdot k^{-1/4}\cdot \log k\right)\cdot \frac{n^k}{k!}\\
\leq \left(96\cdot \l^{-1/(2r)}+23\cdot \sqrt{r} \cdot k^{-1/4}\cdot \log k\right)\cdot \frac{n^k}{k!}
\leq 97\cdot \l^{-1/(2r)}\cdot (1+o_n(1))\cdot {n\choose k}.
\end{multline*}
Here we used $\sqrt{6}\leq 3$ and $23\cdot \sqrt{r} \cdot k^{-1/4}\cdot \log k\leq k^{-1/(2r)}\leq \l^{-1/(2r)}$ for $k$ sufficiently large (recall that $r\geq 3$). Thus, we can conclude that
\[\ind_{r}(k,\l)\leq \ind_{\leq r}(k,\l)\leq 97\cdot \l^{-1/(2r)}\leq 100\cdot \l^{-1/(2r)},\]
as desired.
\end{proof}

Finally, Theorem \ref{thm-graph-forest} is an easy consequence of Lemma \ref{propo-1}.

\begin{proof}[Proof of Theorem \ref{thm-graph-forest}]As in the theorem statement, let $k$ and $\l$ be positive integers with $\l\leq \sqrt{k}/4$ and let $G$ be a graph on $n \geq k$ vertices. Note that every $k$-vertex subset $A\su V(G)$ that induces a forest with exactly $\l$ edges satisfies $\l\leq \l+1\leq m(A)\leq 2\l\leq \sqrt{k}/2$. Hence, by Lemma \ref{propo-1} applied to $r=2$ and $c=\l$, the number of $k$-vertex subsets $A\su V(G)$ that induce a forest with exactly $\l$ edges is at most
\[32\cdot \sqrt{2}\cdot \l^{-1/2}\cdot \frac{n^k}{k!}\leq 50\cdot \l^{-1/2}\cdot \frac{n^k}{k!},\]
where we used that $\sqrt{2}\leq 3/2$.\end{proof}

The next two sections will be devoted to proving Lemmas \ref{propo-1}, \ref{propo-2} and \ref{propo-3}. All three of these proofs follow the same general strategy. We will first prove Lemma \ref{propo-3} in Section \ref{sect-proof-propo-3}, because the proof is slightly easier than the proofs of Lemmas \ref{propo-1} and \ref{propo-2}. Afterwards, we will proof Lemmas \ref{propo-1} and \ref{propo-2} in Section \ref{sect-proof-propo-1-2}. Let us now introduce some notation that will be relevant for the proofs of all three of the lemmas: 

For a hypergraph $G$ and a subset $B\su V(G)$, call a vertex $v\in V(G)\sm B$ \emph{connected to $B$} if there is an edge $e\in E(G)$ with $v\in e$ and $e\sm \lbrace v\rbrace\su B$. Note that in the case where the singleton set $\lbrace v\rbrace$ is an edge of $G$, the vertex $v$ is connected to every subset $B\su V(G)\sm \lbrace v\rbrace$. Also note that in the case where $G$ is a graph, $v$ being connected to $B$ simply means that $v$ has an edge to a vertex in $B$.

For a pair of subsets $B\su A\su V(G)$, let $h(A,B)$ denote the number of vertices $v\in A\sm B$ that are connected to $B$. Furthermore, let $m(A,B)$ denote the number of vertices $v\in A\sm B$ that are non-isolated in $A$. Clearly, $m(A,B)\leq m(A)$. Also note that every vertex $v\in A\sm B$ that is connected to $B$ is automatically non-isolated in $A$. Hence $h(A,B)\leq m(A,B)$. Finally, let $f(A,B)=m(A,B)-h(A,B)$ be the number of vertices $v\in A\sm B$ that are non-isolated in $A$ but not connected to $B$.

\section{Proof of Lemma \ref{propo-3}}\label{sect-proof-propo-3}

In this section, we will prove Lemma \ref{propo-3}. So let us fix positive integers $r$, $k$ and $\l$ and let $G$ be a hypergraph on $n \geq k$ vertices all of whose edges have size at most $r$.

The following definition introduces the key concept for our proof. Recall that we defined the quantities $h(A,B)$, $m(A,B)$ and $f(A,B)$ at the end of Section \ref{sect-proof-o1}.

\begin{definition}\label{defi-pleasant}For a real number $0<\eps<1/2$, let a pair $(A,B)$ of subsets $B\su A\su V(G)$ be called \emph{$\eps$-pleasant} if all of the following conditions are satisfied:
\begin{itemize}
\item[(i)] $\vert A\vert=k$ and $e(A)=\l$.
\item[(ii)] $f(A,B)=0$, meaning that every vertex in $A\sm B$ is either connected to $B$ or isolated in $A$.
\item[(iii)] $h(A,B)\geq \frac{1}{4}r^{-1/2}\cdot \eps\cdot \sqrt{k}$.
\item[(iv)] $\vert A\sm B\vert-h(A,B)\geq \frac{1}{4}r^{-1/2}\cdot \eps\cdot \sqrt{k}$.
\end{itemize}
\end{definition}

The crucial insight for the proof of Lemma \ref{propo-3} is that for each fixed subset $B\su V(G)$ we can bound the number of sets $A\su V(G)$ such that $(A,B)$ is $\eps$-pleasant.

\begin{lemma}\label{lemma-B-pleasant-bound}Let $0<\eps<1/2$ be a real number and let $B\su V(G)$ be a subset of size $\vert B\vert\leq k$. Let $a=k-\vert B\vert$. Then there are at most 
\[2\cdot r^{1/4}\cdot \eps^{-1/2}\cdot k^{-1/4}\cdot \frac{n^a}{a!}\]
different sets $A\su V(G)$ such that $(A,B)$ is an $\eps$-pleasant pair.
\end{lemma}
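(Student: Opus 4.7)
The plan is to fix $B$ and count the $\eps$-pleasant sets $A$ via a decomposition of $A$ according to $P := P(B) = \{v \in V(G) \sm B : v \text{ is connected to } B\}$. For any $\eps$-pleasant $(A,B)$, write $A^h := A \cap P$ and $A^f := (A \sm B) \sm P$, so $A \sm B = A^h \sqcup A^f$, and set $H := |A^h|$, $F := a - H = |A^f|$, $p := |P|$, and $c := \tfrac{1}{4} r^{-1/2} \eps \sqrt{k}$. Conditions (iii) and (iv) of $\eps$-pleasantness translate to $H \geq c$ and $F \geq c$. The crucial structural observation is that every vertex of $A^f$ is isolated in $A$ (by condition (ii), since such vertices lie outside $P$); hence no edge of $G$ contained in $A$ can meet $A^f$, and every edge inside $A$ actually lies inside $B \cup A^h$. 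In particular, $e(B \cup A^h) = e(A) = \l$.

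Setting $M(H) := \#\{A^h \su P : |A^h| = H, \ e(B \cup A^h) = \l\}$, the number of $\eps$-pleasant sets $A$ is then bounded by
\[\sum_{H=c}^{a-c} M(H) \cdot \binom{n - |B| - p}{a - H}.\]
The trivial bound $M(H) \leq \binom{p}{H}$, together with Vandermonde's identity, yields only $\binom{n-|B|}{a} \leq n^a/a!$. To extract the claimed factor of $1/\sqrt{c} = 2r^{1/4}\eps^{-1/2}k^{-1/4}$, the plan is to exploit the edge-count constraint $e(B \cup A^h) = \l$ via an anti-concentration estimate: for a uniformly random $H$-subset $A^h$ of $P$, show that $\Pr[e(B \cup A^h) = \l] \leq O(1/\sqrt{H})$, giving $M(H) \leq O\!\left(\binom{p}{H}/\sqrt{c}\right)$ since $H \geq c$. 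Summing via Vandermonde then delivers the lemma. The relevant random variable is $e(B \cup A^h) = e(B) + \sum_{v \in A^h} d(v) + (\text{edges inside } A^h)$ where $d(v) := |\{e \in E(G) : v \in e \su B \cup \{v\}\}| \geq 1$ for $v \in P$, and so an Erd\H{o}s--Littlewood--Offord or local-CLT style argument should furnish the desired anti-concentration whenever this sum has nontrivial variance.

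The main obstacle is to establish this bound uniformly, and in particular to handle the degenerate regime where $e(B \cup A^h)$ is essentially determined by $H$ alone---for instance, if every $v \in P$ has $d(v) = 1$ and no edges of $G$ lie inside $P$, so that $e(B \cup A^h) = e(B) + H$ is forced. In that regime, the edge constraint singles out a unique value $H = H_0$ and the count reduces to a single term $\binom{p}{H_0}\binom{n-|B|-p}{a-H_0}$, which must itself be shown to be at most $(1/\sqrt{c})\binom{n-|B|}{a}$. That reduces to a sharp estimate on a hypergeometric distribution at an interior point: since $H_0$ and $a - H_0$ are both at least $c$, and the pleasant conditions additionally force $c \leq p \leq n - |B| - c$ (otherwise either $H \geq c$ or $F \geq c$ would be unattainable), the underlying hypergeometric has standard deviation of order $\sqrt{c}$, matching the target. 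Unifying the generic anti-concentration case with this degenerate hypergeometric case and carefully tracking numerical constants to land precisely on the factor $2$ in the statement is the most delicate part of the argument.
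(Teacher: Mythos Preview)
Your decomposition $A\setminus B = A^h \sqcup A^f$ and the reduction to bounding $\sum_{H} M(H)\binom{n-|B|-p}{a-H}$ are correct and natural. However, the plan to control $M(H)$ via an Erd\H{o}s--Littlewood--Offord style anti-concentration has a genuine gap. The random variable $e(B\cup A^h)$ is \emph{not} a linear form in the indicators $\{1_{v\in A^h}\}$: the term ``edges inside $A^h$'' makes it a polynomial of degree up to $r$, and anti-concentration for such polynomials is precisely the kind of difficulty the whole paper is addressing. Moreover, your degenerate/generic dichotomy is not a clean case split: even for graphs, one can have almost all $d(v)$ equal with a few exceptions, and then neither the Littlewood--Offord bound $M(H)\le O(\binom{p}{H}/\sqrt{H})$ nor the single-term hypergeometric bound applies directly.

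What you are missing is that no anti-concentration is needed at all. Since every $v\in P$ is connected to $B$, adding $v$ to $A^h$ strictly increases $e(B\cup A^h)$; hence the family $\{A^h\subseteq P: e(B\cup A^h)=\ell\}$ is an \emph{antichain}, and the LYM inequality gives $\sum_H M(H)/\binom{p}{H}\le 1$. Combined with your ``degenerate case'' pointwise bound $\binom{p}{H}\binom{n-|B|-p}{a-H}\le \binom{a}{H}(p/n)^H(1-p/n)^{a-H}\cdot n^a/a!\le \frac{1}{\sqrt{\pi c}}\cdot n^a/a!$ (AM--GM plus Stirling, using $H(a-H)\ge c\,a/2$), this immediately yields the lemma with room to spare in the constant.

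The paper takes a different route that implicitly encodes the same monotonicity. It labels $A\setminus B$ as a \emph{tidy sequence} $v_1,\dots,v_a$ with $v_1,\dots,v_h$ connected to $B$ and $v_{h+1},\dots,v_a$ isolated, and observes that along any such sequence one has $e(B\cup\{v_1,\dots,v_{i-1}\})<\ell$ exactly for $i\le h$. Thus, in a greedy random construction of tidy sequences, the pool from which $v_i$ is drawn (at most $p$ choices, or at most $n-p$) is determined by the prefix alone. This gives each tidy sequence probability at least $p^{-h}(n-p)^{-(a-h)}\ge \frac{a^a}{h^h(a-h)^{a-h}n^a}$, and combining with the $h!(a-h)!$ orderings of $A\setminus B$ and Stirling yields the bound. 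Both arguments ultimately rest on the strict monotonicity of $e(B\cup\,\cdot\,)$ in $P$; your antichain/LYM fix makes this explicit, while the paper's sequential procedure uses it implicitly.
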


We will prove Lemma \ref{lemma-B-pleasant-bound} in the second subsection. First, we will prove Lemma \ref{propo-3} assuming Lemma \ref{lemma-B-pleasant-bound} in the first subsection.

The idea of the proof of Lemma \ref{propo-3} is to show that for every $k$-vertex subset $A\su V(G)$ with the properties specified in the lemma, we have a sufficiently high probability of generating an $\eps$-pleasant pair $(A,B)$ when choosing a random subset $B\su A$ in an appropriate way. Combining this with Lemma \ref{lemma-B-pleasant-bound} will give an upper bond for the number of such sets $A$.

\subsection{Main proof}

In this subsection we will prove Lemma \ref{propo-3} assuming Lemma \ref{lemma-B-pleasant-bound}. As in the statement of Lemma \ref{propo-3}, let $\eps$ be a real number with $0<\eps<1/2$.

Note that if $\eps^2\cdot k\leq 2^{12} \cdot r$, then
\[8\cdot r^{1/4}\cdot \eps^{-1/2}\cdot k^{-1/4}\cdot \frac{n^k}{k!}\geq \frac{n^k}{k!}\geq {n \choose k}.\]
So in this case Lemma \ref{propo-3} is trivially true as the total number of $k$-vertex subsets of $V(G)$ is only ${n\choose k}$. Thus, we can assume that $\eps^2\cdot k\geq 2^{12} \cdot r$.

As indicated at the end of the previous subsection, we start by proving that for each fixed $k$-vertex subset $A\su V(G)$ with the properties in Lemma \ref{propo-3}, we have a probability of at least $\frac{1}{2}$ to obtain an $\eps$-pleasant pair $(A,B)$ when choosing a random subset $B\su A$ by picking each element of $A$ independently with some appropriately chosen probability $p$.

\begin{lemma}\label{lemma-A-3}Let $p=1-\frac{1}{2}r^{-1/2}k^{-1/2}$. Then for every $k$-vertex subset $A\su V(G)$ with $e(A)=\l$ and $\eps k \leq m(A)\leq (1-\eps) k$, the following holds: If we choose a subset $B\su A$ randomly by taking each element of $A$ into $B$ with probability $p$ (independently for all elements of $A$), then with probability at least $\frac{1}{2}$ the resulting pair $(A,B)$ is $\eps$-pleasant.
\end{lemma}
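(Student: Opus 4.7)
The plan is to recast conditions (ii)--(iv) of Definition \ref{defi-pleasant} as three failure events and handle each with an independent-variables argument. Write $N$ for the set of non-isolated vertices of $A$ and $I = A\sm N$ for the isolated ones, so $\vert N\vert=m(A)$ and $\vert I\vert=k-m(A)$, both lying in $[\eps k,(1-\eps)k]$. The crucial observation is that \emph{whenever (ii) holds}, i.e.\ $f(A,B)=0$, one has $h(A,B)=m(A,B)=\vert N\cap(A\sm B)\vert$ and $\vert A\sm B\vert-h(A,B)=\vert I\cap(A\sm B)\vert$, so (iii) and (iv) collapse to the requirements that each of these two cardinalities is at least $\tau:=\frac{1}{4}r^{-1/2}\eps\sqrt{k}$. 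It therefore suffices to bound the three events
\[E_1=\{f(A,B)\geq 1\},\quad E_2=\{\vert N\cap(A\sm B)\vert<\tau\},\quad E_3=\{\vert I\cap(A\sm B)\vert<\tau\}.\]

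For $E_1$, I plan to apply Markov's inequality to $\E[f(A,B)]$: for each $v\in N$ fix a single edge $e_v\su A$ containing $v$; then $v$ contributes to $f(A,B)$ only when $v\notin B$ \emph{and} $e_v\sm\{v\}\not\su B$, two events depending on disjoint vertex sets (and hence independent) of probabilities $1-p$ and at most $(\vert e_v\vert-1)(1-p)\leq(r-1)(1-p)$ respectively. Summing over $v\in N$ gives $\E[f(A,B)]\leq m(A)(r-1)(1-p)^2\leq k(r-1)/(4rk)<1/4$, so $\Pr[E_1]<1/4$. For $E_2$ and $E_3$, the random variables $\vert N\cap(A\sm B)\vert=\sum_{v\in N}\mathbf{1}_{v\notin B}$ and $\vert I\cap(A\sm B)\vert=\sum_{v\in I}\mathbf{1}_{v\notin B}$ are sums of independent Bernoulli$(1-p)$ indicators, each with mean at least $\eps k(1-p)=\eps\sqrt{k}/(2\sqrt{r})=2\tau$ and variance at most that mean. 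Chebyshev's inequality then bounds $\Pr[E_i]\leq 4/\mu_i\leq 2/\tau=8\sqrt{r}/(\eps\sqrt{k})$ for $i\in\{2,3\}$, which in the regime $\eps^2 k\geq 2^{12}r$ (to which the main proof of Lemma \ref{propo-3} has already been reduced) is at most $1/8$. A union bound will then yield
\[\Pr[(A,B)\text{ is not $\eps$-pleasant}]\leq \Pr[E_1]+\Pr[E_2]+\Pr[E_3]\leq \tfrac{1}{4}+\tfrac{1}{8}+\tfrac{1}{8}=\tfrac{1}{2}.\]

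The conceptual obstacle is that $h(A,B)$ itself is \emph{not} a sum of independent indicators: the connectedness of $v\in A\sm B$ to $B$ is a complicated OR over the edges of $A$ through $v$, and these events are poorly behaved. The trick that keeps the argument light is to spend a quarter of the union-bound budget on forcing $f(A,B)=0$ via Markov; once (ii) is guaranteed, (iii) and (iv) collapse to the independent-indicator sums $\vert N\cap(A\sm B)\vert$ and $\vert I\cap(A\sm B)\vert$, to which Chebyshev applies directly. The other delicate point is the choice of $1-p=\frac{1}{2}r^{-1/2}k^{-1/2}$, which is tuned so that $(1-p)^2\cdot kr$ is a small constant (controlling $\E[f(A,B)]$) while simultaneously $\eps k(1-p)=2\tau$ (lining up $\E[E_i^c]$ with the desired threshold).
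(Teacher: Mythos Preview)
Your proposal is correct and follows essentially the same route as the paper: the same three failure events, the same Markov/union-bound argument for $E_1$ giving probability at most $1/4$, and the same reduction of (iii)--(iv) to the binomial tails of $\vert N\cap(A\sm B)\vert$ and $\vert I\cap(A\sm B)\vert$ once $f(A,B)=0$. The only difference is that the paper bounds those two binomial lower tails via a Chernoff bound rather than Chebyshev; both yield the $1/8$ needed under the standing assumption $\eps^2 k\geq 2^{12}r$, so this is a cosmetic distinction.
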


\begin{proof}Fix a subset $A\su V(G)$ with $\vert A\vert=k$ as well as $e(A)=\l$ and $\eps k \leq m(A)\leq (1-\eps) k$. As in the lemma, let the subset $B\su A$ be chosen by randomly picking elements of $A$ independently with probability $p$. Let $q=1-p=\frac{1}{2}r^{-1/2}k^{-1/2}$, then each element of $A$ will be in $A\sm B$ with probability $q$. We need to prove that the pair $(A,B)$ satisfies the conditions in Definition \ref{defi-pleasant} with probability at least $\frac{1}{2}$.

First, note that condition (i) in Definition \ref{defi-pleasant} is automatically satisfied by the assumptions on $A$.

Next, we claim that condition (ii) holds with probability at least $\frac{3}{4}$. For condition (ii) to fail, there needs to be a vertex $v\in A\sm B$ such that $v$ is non-isolated in $A$ but not connected to $B$. Each vertex $v\in A$ that is non-isolated in $A$ is, by definition, contained in at least one edge $e\su A$. For each fixed such $v$, the probability for $v\in A\sm B$ is precisely $q$. Let us pick one edge $e\su A$ with $v\in e$. For $v$ to cause condition (ii) to fail, furthermore at least one vertex in $e\sm \lbrace v\rbrace$ needs to be in $A\sm B$ as well (otherwise $v$ is connected to $B$). As $\vert e\sm \lbrace v\rbrace\vert\leq r-1$, the probability of this happening is at most $(r-1)\cdot q$ (and this is independent of whether $v\in A\sm B$). Thus, the total probability of $v$ causing condition (ii) to fail is at most $q\cdot (r-1)\cdot q=(r-1)\cdot q^2$. Summing this over all the at most $k$ choices of a non-isolated vertex $v\in A$, we see that the total probability of condition (ii) failing is at most
\[k\cdot (r-1)\cdot q^2\leq k\cdot r\cdot q^2=k\cdot r\cdot \frac{1}{4}\cdot \frac{1}{r\cdot k}=\frac{1}{4}.\]
Thus, condition (ii) indeed holds with probability at least $\frac{3}{4}$.

Note that if condition (ii) holds, then we have $h(A,B)=m(A,B)-f(A,B)=m(A,B)$. Therefore, in order to check conditions (iii) and (iv) we can consider $m(A,B)$ instead of $h(A,B)$.

We claim that with probability at least $\frac{7}{8}$ we have
\begin{equation}\label{ineq-mAB-low-3}
m(A,B)\geq \frac{1}{2}\cdot q \cdot m(A).
\end{equation}
Recall that $m(A,B)$ counts the number of vertices in $A\sm B$ that are non-isolated in $A$. There are $m(A)$ vertices of $A$ that are non-isolated in $A$ and each of them ends up in $A\sm B$ with probability $q$ (and all independently). Thus, $m(A,B)$ is binomially distributed with parameters $m(A)$ and $q$. So using the standard Chernoff bound for the lower tail of binomially distributed variables (see, for example, \cite[Theorem A.1.13]{alon-spencer}), we have 
\begin{multline*}
\mathbb{P}\left[m(A,B)< \frac{1}{2}\cdot q \cdot m(A)\right]<\exp\left(-\frac{\left(\frac{1}{2}\cdot q \cdot m(A)\right)^2}{2\cdot q\cdot m(A)}\right)=\exp\left(-\frac{q}{8}\cdot m(A)\right)\leq \exp\left(-\frac{q}{8}\cdot \eps\cdot k\right)\\
=\exp\left(-\frac{1}{16}\cdot \frac{1}{\sqrt{r}\cdot \sqrt{k}}\cdot \eps\cdot k\right)=\exp\left(-\frac{1}{16}\cdot \frac{\eps\cdot \sqrt{k}}{\sqrt{r}}\right)\leq \exp\left(-\frac{1}{16}\cdot 2^6\right)= e^{-4}\leq \frac{1}{8}
\end{multline*}
where we used $m(A)\geq \eps k$ and $\eps^2\cdot k\geq 2^{12} \cdot r$. So (\ref{ineq-mAB-low-3}) indeed holds with probability at least $\frac{7}{8}$.

Similarly, we claim that with probability at least $\frac{7}{8}$ we have
\begin{equation}\label{ineq-a-mAB-low-3}
\vert A\sm B\vert-m(A,B)\geq \frac{1}{2}\cdot q \cdot (k-m(A)).
\end{equation}
Note that $\vert A\sm B\vert-m(A,B)$ counts the number of vertices in $A\sm B$ that are isolated in $A$. There are $k-m(A)$ vertices of $A$ that are isolated in $A$ and each of them ends up in $A\sm B$ with probability $q$ (and all independently). Thus, $\vert A\sm B\vert-m(A,B)$ is binomially distributed with parameters $k-m(A)$ and $q$. So using the same computation as for inequality (\ref{ineq-mAB-low-3}), we see that
\begin{multline*}
\mathbb{P}\left[\vert A\sm B\vert-m(A,B)< \frac{1}{2}\cdot q \cdot (k-m(A))\right]<\exp\left(-\frac{\left(\frac{1}{2}\cdot q \cdot (k-m(A))\right)^2}{2\cdot q\cdot (k-m(A))}\right)\\
=\exp\left(-\frac{q}{8}\cdot (k-m(A))\right)\leq \exp\left(-\frac{q}{8}\cdot \eps\cdot k\right)\leq \frac{1}{8}
\end{multline*}
where this time we used $m(A)\leq (1-\eps) k$. So (\ref{ineq-a-mAB-low-3}) indeed holds with probability at least $\frac{7}{8}$.

All in all, condition (ii) fails with probability at most $\frac{1}{4}$, and each of (\ref{ineq-mAB-low-3}) and (\ref{ineq-a-mAB-low-3}) fails with probability at most $\frac{1}{8}$. Thus, with probability at least $\frac{1}{2}$ condition (ii) and both of the inequalities (\ref{ineq-mAB-low-3}) and (\ref{ineq-a-mAB-low-3}) hold. We claim that in this case the pair $(A,B)$ is $\eps$-pleasant. Indeed, we already saw that condition (i) is always satisfied by the assumptions on $A$. Furthermore, condition (ii) implies $h(A,B)=m(A,B)-f(A,B)=m(A,B)$. Thus, (\ref{ineq-mAB-low-3}) gives (using $m(A)\geq \eps k$)
\[h(A,B)=m(A,B)\geq \frac{1}{2}\cdot q \cdot m(A)\geq \frac{1}{2}\cdot q\cdot \eps\cdot k=\frac{1}{4}\cdot \frac{1}{\sqrt{r}\cdot \sqrt{k}}\cdot \eps\cdot k=\frac{1}{4}r^{-1/2}\cdot \eps\cdot \sqrt{k}.\]
So condition (iii) holds. Finally, condition (iv) is satisfied, because (\ref{ineq-a-mAB-low-3}) yields (using $m(A)\leq (1-\eps) k$ this time)
\[\vert A\sm B\vert- h(A,B)=\vert A\sm B\vert - m(A,B)\geq \frac{1}{2}\cdot q \cdot (k-m(A))\geq \frac{1}{2}\cdot q\cdot \eps\cdot k=\frac{1}{4}r^{-1/2}\cdot \eps\cdot \sqrt{k}.\]

Thus, the pair $(A,B)$ is indeed $\eps$-pleasant if condition (ii) as well as the inequalities (\ref{ineq-mAB-low-3}) and (\ref{ineq-a-mAB-low-3}) hold. Hence we have shown that $(A,B)$ is an $\eps$-pleasant pair with probability at least $\frac{1}{2}$.
\end{proof}

Let us now prove Lemma \ref{propo-3}. So suppose for contradiction that there are more than
\[8\cdot r^{1/4}\cdot \eps^{-1/2}\cdot k^{-1/4}\cdot \frac{n^k}{k!}.\]
$k$-vertex subsets $A\su V(G)$ with $e(A)=\l$ and $\eps k \leq m(A)\leq (1-\eps) k$. 

Define $p=1-\frac{1}{2}r^{-1/2}k^{-1/2}$ as in Lemma \ref{lemma-A-3}. Now, let us choose a pair $(A,B)$ of subsets $B\su A\su V(G)$ with $\vert A\vert=k$ randomly as follows: First, choose $A\su V(G)$ uniformly at random among all subsets of size $\vert A\vert=k$. Then, choose a subset $B\su A$ by taking each element of $A$ into $B$ with probability $p$ (independently for all elements of $A$).

By our assumption above, the probability that $A$ satisfies $e(A)=\l$ and $\eps k \leq m(A)\leq (1-\eps) k$ is at least
\[8\cdot r^{1/4}\cdot \eps^{-1/2}\cdot k^{-1/4}\cdot \frac{n^k}{k!}\cdot {n\choose k}^{-1}=8\cdot r^{1/4}\cdot \eps^{-1/2}\cdot k^{-1/4}\cdot \frac{n^k}{(n)_k}.\]
If this is the case, then by Lemma \ref{lemma-A-3} the pair $(A,B)$ is $\eps$-pleasant with probability at least $\frac{1}{2}$. Thus, for the total probability that $(A,B)$ is an $\eps$-pleasant pair, we obtain
\begin{equation}\label{ineq-ABpleasantup3}
\mathbb{P}[(A,B)\text{ is }\eps\text{-pleasant}]\geq 
4\cdot r^{1/4}\cdot \eps^{-1/2}\cdot k^{-1/4}\cdot\frac{n^k}{(n)_k}.
\end{equation}

On the other hand, we can find an upper bound for the probability that $(A,B)$ is $\eps$-pleasant by conditioning on $B$. Note that $B$ will always be a subset $B\su V(G)$ with $\vert B\vert \leq k$. So let us fix any subset $B\su V(G)$ with $\vert B\vert \leq k$ and let $a=k-\vert B\vert$. If we condition on having this particular set $B$ as the result of our random draw of the pair $(A,B)$, then there are
\[{n-\vert B\vert\choose k-\vert B\vert}={n-\vert B\vert\choose a}=\frac{(n-\vert B\vert)_a}{a!}\]
possibilities for the set $A$ and they are all equally likely (recall that we choose $A$ uniformly at random among all subsets $A\su V(G)$ with $\vert A\vert=k$). But by Lemma \ref{lemma-B-pleasant-bound} at most
\[2\cdot r^{1/4}\cdot \eps^{-1/2}\cdot k^{-1/4}\cdot \frac{n^a}{a!}\]
of these possible sets $A$ have the property that $(A,B)$ is $\eps$-pleasant. Thus, when we condition on having this set $B$, the probability for $(A,B)$ to be $\eps$-pleasant  is at most
\[2\cdot r^{1/4}\cdot \eps^{-1/2}\cdot k^{-1/4}\cdot \frac{n^a}{(n-\vert B\vert)_a}=
2\cdot r^{1/4}\cdot \eps^{-1/2}\cdot k^{-1/4}\cdot \frac{n^{k-\vert B\vert}}{(n-\vert B\vert)_{k-\vert B\vert}}
\leq 2\cdot r^{1/4}\cdot \eps^{-1/2}\cdot k^{-1/4}\cdot \frac{n^k}{(n)_k}.\]
Since the set $B$ was arbitrary, this shows that the total probability for $(A,B)$ to be an $\eps$-pleasant pair satisfies
\[\mathbb{P}[(A,B)\text{ is }\eps\text{-pleasant}]\leq 2\cdot r^{1/4}\cdot \eps^{-1/2}\cdot k^{-1/4}\cdot \frac{n^k}{(n)_k}.\]
But now we have a contradiction to (\ref{ineq-ABpleasantup3}). This finishes the proof of Lemma \ref{propo-3}.

\subsection{Proof of Lemma \ref{lemma-B-pleasant-bound}}

Now, let us finally prove Lemma \ref{lemma-B-pleasant-bound}. As in the statement of the lemma, fix $0<\eps<1/2$ as well as a subset $B\su V(G)$ of size $\vert B\vert\leq k$, and let $a=k-\vert B\vert$.

Note that for every $\eps$-pleasant pair $(A,B)$ we have $\vert A\vert=k$ by condition (i) in Definition \ref{defi-pleasant} and therefore $\vert A\sm B\vert=k-\vert B\vert=a$.

The following definition introduces a central notion for our proof of Lemma \ref{lemma-B-pleasant-bound}.

\begin{definition}\label{defi-tidy-sequence} Let us call a sequence $v_1,\dots,v_a$ of distinct vertices in $V(G)\sm B$ \emph{tidy} if $e(B\cup \lbrace v_1,\dots,v_a\rbrace)=\l$ and if there exists an index $h\in \lbrace 1,\dots,a-1\rbrace$ with
\[\frac{\eps\cdot \sqrt{k}}{4\sqrt{r}}\leq h\leq a-\frac{\eps\cdot \sqrt{k}}{4\sqrt{r}}\]
and such that
\begin{itemize}
\item each of the vertices $v_1,\dots,v_{h}$ is connected to $B$, and
\item each of the vertices $v_{h+1},\dots,v_{a}$ is isolated in $B\cup \lbrace v_1,\dots,v_a\rbrace$.
\end{itemize}
\end{definition}

Note that for a sequence $v_1,\dots,v_a$ as in Definition \ref{defi-tidy-sequence}, the vertices $v_{h+1},\dots,v_a$ cannot be connected to $B$ (since otherwise they would be non-isolated in $B\cup \lbrace v_1,\dots,v_a\rbrace$). So the number $h(B\cup \lbrace v_1,\dots,v_a\rbrace, B)$ of vertices in $\lbrace v_1,\dots,v_a\rbrace$ that are connected to $B$ is precisely $h$. Thus, for every tidy sequence $v_1,\dots,v_a$, the index $h$ in Definition \ref{defi-tidy-sequence} must be equal to $h(B\cup \lbrace v_1,\dots,v_a\rbrace, B)$.

An important observation is that for every set $A\su V(G)$ such that $(A,B)$ is $\eps$-pleasant, we have $\vert A\sm B\vert = a$ and we can label the vertices of $A\sm B$ as $v_1,\dots,v_a$ in such a way that $v_1,\dots,v_a$ is a tidy sequence. In fact, there are many ways to label the vertices of $A\sm B$ as $v_1,\dots,v_a$ so that $v_1,\dots,v_a$ is a tidy sequence. The following claim makes this precise.

\begin{claim}\label{claim-count-labelings-tidy} Let $A\su V(G)$ be such that $(A,B)$ is an $\eps$-pleasant pair. Then there are at least
\[h(A,B)!\cdot (a-h(A,B))!\]
different ways to  label the vertices of $A\sm B$ as $v_1,\dots,v_a$ such that $v_1,\dots,v_a$ is a tidy sequence.
\end{claim}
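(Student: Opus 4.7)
The plan is to exhibit an explicit family of labelings of $A \setminus B$ that are tidy, and count them. The key observation is that the $\varepsilon$-pleasantness of $(A,B)$ forces a clean partition of $A\setminus B$ into two pieces which correspond exactly to the first block and the second block of a tidy sequence.

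First I would verify that condition (ii) partitions $A\setminus B$ cleanly. A vertex $v\in A\setminus B$ that is connected to $B$ cannot be isolated in $A$, because the edge witnessing connectedness lies inside $B\cup\{v\}\subseteq A$ and contains $v$. Combined with condition (ii), which says every $v\in A\setminus B$ is either connected to $B$ or isolated in $A$, this splits $A\setminus B$ into exactly $h(A,B)$ vertices connected to $B$ and $|A\setminus B|-h(A,B)=a-h(A,B)$ vertices isolated in $A$.

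Next I would define the labelings: for every ordering of the $h(A,B)$ "connected" vertices as $v_1,\dots,v_{h(A,B)}$ followed by every ordering of the $a-h(A,B)$ "isolated" vertices as $v_{h(A,B)+1},\dots,v_a$, we get a labeling of $A\setminus B$. This yields exactly $h(A,B)!\cdot(a-h(A,B))!$ labelings. I would then check that each such labeling is tidy, taking $h = h(A,B)$ in Definition \ref{defi-tidy-sequence}: the equality $e(B\cup\{v_1,\dots,v_a\}) = e(A) = \ell$ comes from condition (i); the vertices $v_1,\dots,v_h$ are connected to $B$ by construction; and $v_{h+1},\dots,v_a$ are isolated in $B\cup\{v_1,\dots,v_a\}=A$ by construction. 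Finally, the range condition $\tfrac{\varepsilon\sqrt{k}}{4\sqrt{r}} \leq h \leq a - \tfrac{\varepsilon\sqrt{k}}{4\sqrt{r}}$ is exactly conditions (iii) and (iv) of $\varepsilon$-pleasantness.

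There is no real obstacle here: the statement is essentially a bookkeeping consequence of the way $\varepsilon$-pleasant pairs were defined, and the only thing to be careful about is the direction of the two set inclusions (isolated in $A$ versus isolated in $B\cup\{v_1,\dots,v_a\}$), which coincide since $A = B\cup(A\setminus B) = B\cup\{v_1,\dots,v_a\}$. The count $h(A,B)!\cdot(a-h(A,B))!$ is just a lower bound (not claimed to be all tidy labelings) since in principle there could be tidy labelings where $h\neq h(A,B)$, but we do not need to account for those.
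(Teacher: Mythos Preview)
Your proof is correct and follows essentially the same approach as the paper: partition $A\setminus B$ via condition~(ii) into the $h(A,B)$ vertices connected to $B$ and the $a-h(A,B)$ vertices isolated in $A$, then label the first block followed by the second in all possible orders and verify tidiness using conditions (i), (iii), (iv). One minor remark: your closing comment that ``in principle there could be tidy labelings where $h\neq h(A,B)$'' is unnecessary, since the paper already observed (just before the claim) that the index $h$ in a tidy sequence is forced to equal $h(B\cup\{v_1,\dots,v_a\},B)=h(A,B)$; but this does not affect the validity of your lower bound.
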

\begin{proof}First, note that by condition (i) in Definition \ref{defi-pleasant} we have $\vert A\sm B\vert=k-\vert B\vert=a$.

Let $h=h(A,B)$ and note that by conditions (iii) and (iv) in Definition \ref{defi-pleasant} we have 
\[\frac{\eps\cdot \sqrt{k}}{4\sqrt{r}}\leq h\leq \vert A\sm B\vert-\frac{\eps\cdot \sqrt{k}}{4\sqrt{r}}=a-\frac{\eps\cdot \sqrt{k}}{4\sqrt{r}}.\]
In particular $0<h<a$. As $h=h(A,B)$ is an integer, this implies $1\leq h\leq a-1$.

There are exactly $h=h(A,B)$ vertices in $A\sm B$ that are connected to $B$. There are $h!$ ways to label these vertices as $v_1,\dots,v_h$. By condition (ii) in Definition \ref{defi-pleasant}, the remaining $a-h$ vertices in $A\sm B$ are all isolated in $A$. There are $(a-h)!$ ways to label these vertices as $v_{h+1},\dots,v_a$. It is easy to see that for each labeling of $A\sm B$ as $v_1,\dots,v_a$ obtained in this way, the sequence $v_1,\dots,v_a$ together with the index $h$ satisfies the conditions in Definition \ref{defi-pleasant}.

All in all, this gives $h!\cdot (a-h)!=h(A,B)!\cdot (a-h(A,B))!$ labelings of $A\sm B$ as $v_1,\dots,v_a$ such that $v_1,\dots,v_a$ is a tidy sequence.
\end{proof}

In particular, for each subset $A\su V(G)$ such that $(A,B)$ is an $\eps$-pleasant pair, there is at least one way to  label the vertices of $A\sm B$ as $v_1,\dots,v_a$ such that $v_1,\dots,v_a$ is a tidy sequence. Thus, we may assume that there exists at least one tidy sequence $v_1,\dots,v_a$, as otherwise there are no sets $A\su V(G)$ such that $(A,B)$ is an $\eps$-pleasant pair (and then Lemma \ref{lemma-B-pleasant-bound} is trivially true).

Note that for every tidy sequence $v_1,\dots,v_a$ (with the index $h$ as in Definition \ref{defi-tidy-sequence}), we must have $e(B\cup \lbrace v_1,\dots,v_{h}\rbrace)=e(B\cup \lbrace v_1,\dots,v_a\rbrace)=\l$, because the vertices $v_{h+1},\dots,v_a$ are all isolated in $B\cup \lbrace v_1,\dots,v_a\rbrace$ (so they are not part of any edges $e\su B\cup \lbrace v_1,\dots,v_a\rbrace$). On the other hand, we have $e(B\cup \lbrace v_1,\dots,v_{h-1}\rbrace)<e(B\cup \lbrace v_1,\dots,v_{h}\rbrace)=\l$, because $v_{h}$ is connected to $B$ (so  there exists at least one edge $e\su B\cup \lbrace v_h\rbrace$ containing $v_{h}$).

Hence we see that $e(B\cup \lbrace v_1,\dots,v_{i-1}\rbrace)<\l$ for all $i$ with $1\leq i\leq h$ and $e(B\cup \lbrace v_1,\dots,v_{i-1}\rbrace)=\l$ for all $i$ with $h+1\leq i\leq a$. Therefore, for all indices $i=1,\dots,a$, the following holds: If $e(B\cup \lbrace v_1,\dots,v_{i-1}\rbrace)<\l$, then $v_i$ is connected $B$, but if $e(B\cup \lbrace v_1,\dots,v_{i-1}\rbrace)=\l$, then $v_i$ is not connected to $B$ (as $v_i$ must be isolated in $B\cup \lbrace v_1,\dots,v_a\rbrace$).

Let us now choose a random tidy sequence $v_1,\dots,v_a$ according to the following procedure: If for some $1\leq i\leq a$ the vertices $v_1,\dots, v_{i-1}$ have already been chosen, choose $v_{i}$ uniformly at random among all vertices in $V(G)\sm (B\cup \lbrace v_1,\dots, v_{i-1}\rbrace)$ such that $v_1,\dots, v_{i}$ can be extended to some tidy sequence.

For example, $v_1$ will be chosen uniformly at random among all vertices that occur as the first vertex in some tidy sequence. Then, $v_2$ will be chosen uniformly at random among all vertices that appear as the second vertex in a tidy sequence that starts with vertex $v_1$. Continuing this random process, in the end one obtains a tidy sequence $v_1,\dots,v_a$ (and in each step there is indeed at least one choice for the next vertex).

For every tidy sequence $v_1,\dots,v_a$, the following claim gives a lower bound on the probability that this particular sequence got chosen in the random procedure.

\begin{claim}\label{claim-prob-tidy-seq} Let $v_1,\dots,v_a$ be a tidy sequence and let the index $h$ be as in Definition \ref{defi-tidy-sequence}. Then the probability that the sequence $v_1,\dots,v_a$ was chosen during the random procedure is at least
\[\frac{a^a}{h^h\cdot (a-h)^{a-h}}\cdot \frac{1}{n^a}.\]
\end{claim}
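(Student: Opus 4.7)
The plan is to apply the chain rule to the random procedure and to bound, at every step, the number of eligible candidates for $v_i$. Writing $\mathbb{P}[v_1,\dots,v_a] = \prod_{i=1}^{a} 1/N_i$, where $N_i$ is the number of eligible choices for the $i$-th vertex given the history $v_1,\dots,v_{i-1}$, it suffices to prove the deterministic inequality $\prod_{i=1}^a N_i \le n^a \cdot h^h(a-h)^{a-h}/a^a$.

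Let $C \subseteq V(G)\setminus B$ denote the set of vertices connected to $B$. I would invoke the observation made just before the random procedure: in every tidy sequence $w_1,\dots,w_a$, the vertex $w_i$ is connected to $B$ if and only if $e(B\cup \{w_1,\dots,w_{i-1}\}) < \ell$. This dichotomy applies uniformly to every tidy extension of our current history, not only to the specific target sequence. For our target tidy sequence, the value $e(B\cup\{v_1,\dots,v_{i-1}\})$ is strictly less than $\ell$ exactly when $i \le h$, so every eligible candidate at step $i$ must lie in $C$ when $i\le h$, and in $(V(G)\setminus B)\setminus C$ when $i > h$. Since we have already used $i-1$ elements of $C$ in the first regime (all of $v_1,\dots,v_{i-1}$ lie in $C$) and $i-1-h$ elements of $(V(G)\setminus B)\setminus C$ in the second, this yields $N_i \le \vert C\vert - (i-1)$ for $i\le h$ and $N_i \le (n-\vert C\vert) - (i-1-h)$ for $i > h$.

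Multiplying across all $a$ steps and using the crude bound $(m)_t \le m^t$ for the two resulting falling factorials, I obtain $\prod_{i=1}^a N_i \le \vert C\vert^h (n-\vert C\vert)^{a-h}$. A standard calculus exercise (or weighted AM--GM) shows that the function $x \mapsto x^h(n-x)^{a-h}$ on $[0,n]$ attains its maximum at $x = nh/a$, where it equals $n^a \cdot h^h(a-h)^{a-h}/a^a$. Inverting this product bound produces the claimed lower bound on the probability.

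The only step requiring real attention is ensuring that \emph{every} eligible candidate at step $i$ (not merely the chosen $v_i$) is forced into $C$ or its complement according to whether $i\le h$ or $i>h$. This is exactly why I need the paper's earlier biconditional, which applies to every tidy extension of the current prefix, rather than only to the target sequence. It is also worth noting in passing that $\vert C\vert \ge h$ and $n-\vert C\vert \ge a-h$, witnessed by the target sequence itself, so all falling factorials above are nonnegative and the bounds make sense. Beyond this bookkeeping the argument reduces to a one-line optimization, and I do not anticipate any other obstacle.
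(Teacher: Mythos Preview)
Your proposal is correct and follows essentially the same approach as the paper: both arguments use the dichotomy (established just before the random procedure) that the $i$-th vertex of any tidy sequence must lie in $C$ or its complement according to whether $e(B\cup\{v_1,\dots,v_{i-1}\})$ is below or equal to $\ell$, bound the number of eligible candidates accordingly, and finish with the AM--GM maximization of $x^h(n-x)^{a-h}$. Your intermediate bounds $N_i\le |C|-(i-1)$ and $N_i\le (n-|C|)-(i-1-h)$ are slightly sharper than the paper's (which uses simply $|C|$ and $n-|C|$), but since you then relax the falling factorials to powers, the two arguments coincide from that point on.
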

\begin{proof}
Let $X$ be the number of vertices in $V(G)\sm B$ that are connected to $B$. Note that $1\leq h\leq a-1$ implies that $v_1$ is connected to $B$, but $v_{a}$ is not connected to $B$ (since it is isolated in $B\cup \lbrace v_1,\dots,v_a\rbrace$). In particular, we see that $0<X<n$.

For every $1\leq i\leq a$, let us analyze the number of choices we have when choosing $v_i$ in the random procedure described above (after having chosen $v_1,\dots,v_{i-1}$).

If $1\leq i\leq h$, then by the above observations we have $e(B\cup \lbrace v_1,\dots,v_{i-1}\rbrace)<\l$ and, when choosing $v_i$, we must choose a vertex that is connected to $B$. Thus, there are at most $X$ choices, and the probability that the desired vertex $v_i$ is chosen is at least $1/X$.

On the other hand, if $h+1\leq i\leq a$, then by the above observations we have $e(B\cup \lbrace v_1,\dots,v_{i-1}\rbrace)=\l$ and, when choosing $v_i$, we must choose a vertex that is not connected to $B$. Thus, there are at most $n-X$ choices, and the probability that the desired vertex $v_i$ is chosen is at least $1/(n-X)$.

So all in all, the probability that we make the desired choice in each step and obtain precisely the sequence $v_1,\dots,v_a$ is at least
\[\left(\frac{1}{X}\right)^h\cdot \left(\frac{1}{n-X}\right)^{a-h}.\]

Now, let $0<x<1$ be such that $X=xn$, then 
\[\left(\frac{1}{X}\right)^h\cdot \left(\frac{1}{n-X}\right)^{a-h}=\left(\frac{1}{xn}\right)^h\cdot \left(\frac{1}{(1-x)n}\right)^{a-h}=\frac{1}{x^h(1-x)^{a-h}}\cdot \frac{1}{n^a}.\]
Note that by the inequality of arithmetic and geometric mean we have
\begin{multline*}
x^h(1-x)^{a-h}=h^h\cdot (a-h)^{a-h}\cdot\left(\frac{x}{h}\right)^h\cdot\left(\frac{1-x}{a-h}\right)^{a-h}\\
\leq h^h\cdot (a-h)^{a-h}\cdot\left(\frac{h\cdot \frac{x}{h}+(a-h)\cdot \frac{1-x}{a-h}}{h+a-h}\right)^{h+a-h}=h^h\cdot (a-h)^{a-h}\cdot\left(\frac{1}{a}\right)^{a}.
\end{multline*}
Hence the probability to obtain precisely the sequence $v_1,\dots,v_a$ during the random procedure is at least
\[\left(\frac{1}{X}\right)^h\cdot \left(\frac{1}{n-X}\right)^{a-h}=\frac{1}{x^h(1-x)^{a-h}}\cdot \frac{1}{n^a}\geq \frac{a^a}{h^h\cdot (a-h)^{a-h}}\cdot \frac{1}{n^a},\]
which finishes the proof of the claim.
\end{proof}

\begin{corollary}\label{coro-A-pleasant} Let $A\su V(G)$ be such that $(A,B)$ is an $\eps$-pleasant pair. Then, when choosing $v_1,\dots,v_a$ according to the random procedure described above, we have $B\cup \lbrace v_1,\dots,v_a\rbrace=A$ with probability at least
\[\frac{\eps^{1/2}\cdot k^{1/4}}{2\cdot r^{1/4}}\cdot \frac{a!}{n^a}.\]
\end{corollary}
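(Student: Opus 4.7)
The plan is to combine Claims \ref{claim-count-labelings-tidy} and \ref{claim-prob-tidy-seq} with Stirling's formula. Set $h = h(A,B)$. Since the random procedure always outputs a tidy sequence, the event $B \cup \lbrace v_1, \dots, v_a\rbrace = A$ occurs precisely when the procedure outputs one of the tidy labelings of $A \sm B$, and these correspond to pairwise disjoint outcomes. Hence the probability in question is at least the number of such labelings, times the minimum probability of obtaining a specific one.

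Claim \ref{claim-count-labelings-tidy} guarantees at least $h! \cdot (a-h)!$ tidy labelings of $A \sm B$, and for each such labeling Claim \ref{claim-prob-tidy-seq} gives a lower bound of $\frac{a^a}{h^h (a-h)^{a-h}} \cdot \frac{1}{n^a}$ on the probability of being produced by the procedure. Multiplying, the probability of $B \cup \lbrace v_1, \dots, v_a\rbrace = A$ is at least
\[
h! \cdot (a-h)! \cdot \frac{a^a}{h^h (a-h)^{a-h}} \cdot \frac{1}{n^a} = \frac{h!\,(a-h)!}{a!} \cdot \frac{a^a}{h^h (a-h)^{a-h}} \cdot \frac{a!}{n^a}.
\]
Thus the task reduces to showing that $\frac{h!\,(a-h)!}{a!} \cdot \frac{a^a}{h^h (a-h)^{a-h}} \geq \frac{\eps^{1/2} k^{1/4}}{2\, r^{1/4}}$.

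For this, I would use the standard Stirling bounds $m! \geq \sqrt{2\pi m}\,(m/e)^m$ applied to $h!$ and $(a-h)!$, and $a! \leq e\sqrt{a}\,(a/e)^a$ applied to $a!$. After cancellation, the left-hand side becomes at least $\frac{2\pi}{e} \cdot \sqrt{h(a-h)/a}$. Then I invoke the $\eps$-pleasantness conditions (iii) and (iv) from Definition \ref{defi-pleasant}: both $h$ and $a-h$ are at least $\frac{\eps\sqrt{k}}{4\sqrt{r}}$, so one factor is at least $\frac{\eps\sqrt{k}}{4\sqrt{r}}$ while the other is at least $a/2$. This gives $h(a-h)/a \geq \frac{\eps\sqrt{k}}{8\sqrt{r}}$, hence
\[
\frac{2\pi}{e} \cdot \sqrt{\frac{h(a-h)}{a}} \geq \frac{2\pi}{e} \cdot \frac{\eps^{1/2} k^{1/4}}{2\sqrt{2}\, r^{1/4}} = \frac{\pi}{e\sqrt{2}} \cdot \frac{\eps^{1/2} k^{1/4}}{r^{1/4}},
\]
and since $\pi/(e\sqrt{2}) > 1/2$ the desired inequality follows.

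There is no real obstacle here; the result is essentially a bookkeeping exercise combining the two previous claims. The only mildly delicate point is handling the case where $h$ and $a-h$ are very imbalanced: one wants to make sure the product $h(a-h)$ is still $\Omega(a \cdot \eps\sqrt{k}/\sqrt{r})$, which follows from the observation that $\max(h, a-h) \geq a/2$ together with the uniform lower bound $\min(h,a-h) \geq \eps\sqrt{k}/(4\sqrt{r})$ coming from pleasantness.
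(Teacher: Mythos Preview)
Your proposal is correct and follows essentially the same approach as the paper: combine Claims \ref{claim-count-labelings-tidy} and \ref{claim-prob-tidy-seq}, apply Stirling's formula to get the factor $\frac{2\pi}{e}\sqrt{h(a-h)/a}$, and use conditions (iii) and (iv) of Definition \ref{defi-pleasant} together with $\max(h,a-h)\geq a/2$ to bound $h(a-h)/a$ from below. The numerical endgame ($\pi/(e\sqrt{2})>1/2$) matches the paper's computation.
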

\begin{proof}
Recall that by Claim \ref{claim-count-labelings-tidy} there are at least $h(A,B)!\cdot (a-h(A,B))!$ labelings of $A\sm B$ as $v_1,\dots,v_a$ such that $v_1,\dots,v_a$ is a tidy sequence.  Clearly, for each of these labelings we have $B\cup \lbrace v_1,\dots,v_a\rbrace=A$. Hence for each of these labelings the index $h$ in Definition \ref{defi-tidy-sequence} equals $h(B\cup \lbrace v_1,\dots,v_a\rbrace,B)=h(A,B)$.

Thus, abbreviating $h(A,B)$ just by $h$, by Claim \ref{claim-prob-tidy-seq} each of these $h!\cdot (a-h)!$ tidy sequences $v_1,\dots,v_a$ is chosen during the random procedure with probability at least
\[\frac{a^a}{h^h\cdot (a-h)^{a-h}}\cdot \frac{1}{n^a}.\]
So all in all the probability of having $B\cup \lbrace v_1,\dots,v_a\rbrace=A$ is at least
\[h!\cdot (a-h)!\cdot \frac{a^a}{h^h\cdot (a-h)^{a-h}}\cdot \frac{1}{n^a}=\frac{h!}{h^h}\cdot \frac{(a-h)!}{(a-h)^{a-h}}\cdot \frac{a^a}{a!}\cdot \frac{a!}{n^a}.\]
Note that by conditions (iii) and (iv) in Definition \ref{defi-pleasant} we have both $h\geq \frac{1}{4}r^{-1/2}\cdot \eps\cdot \sqrt{k}$ and $a-h\geq \frac{1}{4}r^{-1/2}\cdot \eps\cdot \sqrt{k}$. Furthermore, at least one of the two numbers $h$ and $a-h$ is at least $\frac{1}{2}a$. Thus,
\[h\cdot (a-h)\geq \frac{\eps\cdot \sqrt{k}}{4\sqrt{r}} \cdot \frac{1}{2}a=\frac{\eps\cdot \sqrt{k}}{8\sqrt{r}} \cdot a.\]
Now, by Stirling's formula (see for example \cite{robbins}) we have
\begin{multline*}
\frac{h!}{h^h}\cdot \frac{(a-h)!}{(a-h)^{a-h}}\cdot \frac{a^a}{a!}\geq 
\left(\sqrt{2\pi}\cdot \sqrt{h}\cdot e^{-h}\right)\cdot \left(\sqrt{2\pi}\cdot \sqrt{a-h}\cdot e^{-a+h}\right)\cdot \left(e\cdot \sqrt{a}\cdot e^{-a}\right)^{-1}\\
=\frac{2\pi}{e}\cdot \sqrt{\frac{h\cdot (a-h)}{a}}\geq \frac{2\pi}{e}\cdot \sqrt{\frac{\eps\cdot \sqrt{k}}{8\sqrt{r}}}=\frac{2\pi}{e\cdot \sqrt{2}}\cdot \frac{1}{2}\cdot \frac{\eps^{1/2}\cdot k^{1/4}}{r^{1/4}}\geq \frac{6}{3\cdot 2}\cdot  \frac{\eps^{1/2}\cdot k^{1/4}}{2\cdot r^{1/4}}=\frac{\eps^{1/2}\cdot k^{1/4}}{2\cdot r^{1/4}},
\end{multline*}
Thus, the probability of having $B\cup \lbrace v_1,\dots,v_a\rbrace=A$ is at least
\[\frac{h!}{h^h}\cdot \frac{(a-h)!}{(a-h)^{a-h}}\cdot \frac{a^a}{a!}\cdot \frac{a!}{n^a}\geq\frac{\eps^{1/2}\cdot k^{1/4}}{2\cdot r^{1/4}}\cdot \frac{a!}{n^a},\]
as desired.
\end{proof}
Recall that we fixed the set $B\su V(G)$ and we chose a tidy sequence $v_1,\dots,v_a$ according to the random procedure described above. For different sets $A\su V(G)$ such that $(A,B)$ is an $\eps$-pleasant pair, the events $B\cup \lbrace v_1,\dots,v_a\rbrace=A$ are clearly disjoint. Hence, by Corollary \ref{coro-A-pleasant} the number of different sets $A\su V(G)$ such that $(A,B)$ is an $\eps$-pleasant pair can be at most
\[\left(\frac{\eps^{1/2}\cdot k^{1/4}}{2\cdot r^{1/4}}\cdot \frac{a!}{n^a}\right)^{-1}=2\cdot r^{1/4}\cdot \eps^{-1/2}\cdot k^{-1/4}\cdot \frac{n^a}{a!},\]
which finishes the proof of Lemma \ref{lemma-B-pleasant-bound}.

\section{Proof of Lemmas \ref{propo-1} and \ref{propo-2}}\label{sect-proof-propo-1-2}

This section is devoted to proving Lemmas \ref{propo-1} and \ref{propo-2}. The general proof strategy is very similar to the proof of Lemma \ref{propo-3} in the previous section. However, we need slightly different versions of the notions introduced in Definitions \ref{defi-pleasant} and \ref{defi-tidy-sequence}, see Definitions \ref{defi-nice} and \ref{defi-tame-sequence} below.

As in Lemmas \ref{propo-1} and \ref{propo-2}, let us fix positive integers $r$, $k$ and $\l$ and let $G$ be a hypergraph on $n \geq k$ vertices all of whose edges have size at most $r$. 

Recall that we defined the quantities $h(A,B)$, $m(A, B)$ and $f(A,B)$ at the end of Section \ref{sect-proof-o1}.

\begin{definition}\label{defi-nice}For a real number $z>0$, let a pair $(A,B)$ of subsets $B\su A\su V(G)$ be called \emph{$z$-nice} if all of the following conditions are satisfied:
\begin{itemize}
\item[(i)] $\vert A\vert=k$ and $e(A)=\l$.
\item[(ii)] $z\leq h(A,B)\leq \sqrt{k}/2$.
\item[(iii)] $\vert A\sm B\vert\geq 2\sqrt{k}\cdot f(A,B)$.
\item[(iv)] $\vert A\sm B\vert\geq \sqrt{k}$.
\end{itemize}
\end{definition}

Similarly as in the proof of Lemma \ref{propo-3}, for a given value of $z$ and a fixed subset $B\su V(G)$ we can bound the number of sets $A\su V(G)$ such that $(A,B)$ is $z$-nice:

\begin{lemma}\label{lemma-B-nice-bound}Let $z>0$ be a real number and let $B\su V(G)$ be a subset of size $\vert B\vert\leq k$. Let $a=k-\vert B\vert$. Then there are at most 
\[\frac{4}{3}\cdot\frac{1}{\sqrt{z}}\cdot \frac{n^a}{a!}\]
different sets $A\su V(G)$ such that $(A,B)$ is a $z$-nice pair.
\end{lemma}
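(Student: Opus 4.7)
The plan is to mirror the proof of Lemma~\ref{lemma-B-pleasant-bound} closely. I would first introduce a notion of \emph{tame sequence} analogous to Definition~\ref{defi-tidy-sequence}: a sequence $v_1,\ldots,v_a$ of distinct vertices of $V(G)\setminus B$ with $e(B\cup\{v_1,\ldots,v_a\})=\ell$ together with an index $h$ with $z\le h\le\sqrt{k}/2$ such that $v_1,\ldots,v_h$ are connected to $B$ while $v_{h+1},\ldots,v_a$ are not connected to $B$ (possibly with an additional ordering clause on the second block, e.g.\ forcing the $f(A,B)$ non-isolated-not-connected vertices to come after the vertices isolated in $A$, to make the probability analysis go through). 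For any $z$-nice pair $(A,B)$, condition~(ii) of Definition~\ref{defi-nice} guarantees that $h(A,B)$ is an admissible choice of index $h$, so any labeling of $A\setminus B$ that places the $h(A,B)$ vertices connected to $B$ first, in any order, and the remaining $a-h(A,B)$ not-connected vertices afterwards, in any order, yields a tame sequence, giving at least $h(A,B)!\cdot(a-h(A,B))!$ such labelings.

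I then mimic the random procedure from just before Claim~\ref{claim-prob-tidy-seq}: at step $i$, choose $v_i$ uniformly at random among vertices $v\in V(G)\setminus(B\cup\{v_1,\ldots,v_{i-1}\})$ such that $v_1,\ldots,v_{i-1},v$ extends to some tame sequence. Writing $X$ for the number of vertices in $V(G)\setminus B$ connected to $B$, I aim to prove, as in Claim~\ref{claim-prob-tidy-seq}, that at each step $i\le h^*$ there are at most $X$ valid choices and at each step $i>h^*$ at most $n-X$, so the probability of obtaining a specific tame sequence with index $h^*$ is at least $(1/X)^{h^*}(1/(n-X))^{a-h^*}$. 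The AM--GM inequality then gives a lower bound of $a^a/\bigl(h^{*h^*}(a-h^*)^{a-h^*}\cdot n^a\bigr)$. Summing over the $h(A,B)!\cdot(a-h(A,B))!$ tame labelings of each $z$-nice $A\supseteq B$ and invoking Stirling's formula exactly as in Corollary~\ref{coro-A-pleasant} yields the lower bound $(2\pi/e)\sqrt{h(A,B)(a-h(A,B))/a}\cdot a!/n^a$ for the probability of producing some labeling of $A\setminus B$. Conditions~(ii) and (iv) of Definition~\ref{defi-nice} force $h(A,B)\le\sqrt{k}/2\le a/2$, hence $a-h(A,B)\ge a/2$, so $h(A,B)(a-h(A,B))/a\ge h(A,B)/2\ge z/2$. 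The per-$A$ probability is therefore at least $(\pi\sqrt{2}/e)\sqrt{z}\cdot a!/n^a$; since distinct sets $A$ yield disjoint events, the number of such $A$ is at most $\bigl(e/(\pi\sqrt{2})\bigr)(1/\sqrt{z})\cdot n^a/a!\le(4/3)(1/\sqrt{z})\cdot n^a/a!$.

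The main obstacle is justifying the upper bounds on the number of valid choices at each step of the random procedure. In the $\eps$-pleasant setting, condition~(ii) of Definition~\ref{defi-pleasant} forces $f(A,B)=0$, and the edge-count argument used in Claim~\ref{claim-prob-tidy-seq} (namely, that $e(B\cup\{v_1,\ldots,v_{i-1}\})<\ell$ before index $h$ is reached, since later vertices are isolated in the final set and add no edges) cleanly forces the next vertex to be connected to $B$. For $z$-nice pairs, however, $f(A,B)$ may be positive: edges among the not-connected vertices can increase the edge count after step $h$, so the clean edge-count argument breaks down. This is precisely where the extra ordering clause in the definition of tame sequence, together with condition~(iii) of Definition~\ref{defi-nice} (which bounds $f(A,B)$ by $|A\setminus B|/(2\sqrt{k})$) and condition~(iv), should come in: the paucity of troublesome vertices and their placement in the tail of the sequence should ensure that the contribution of non-connected vertices to the pool of valid choices at steps $i\le h^*$ is negligible compared to $X$, so that the bound $X$ still holds up to an absolute constant factor absorbed by the slack between $e/(\pi\sqrt{2})\approx 0.61$ and the target $4/3$.
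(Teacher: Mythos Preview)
Your overall strategy and the Stirling computation are right, but the claim ``at each step $i\le h^*$ there are at most $X$ valid choices'' fails for your definition of tame. The pool at step $i$ consists of all $v$ for which $v_1,\dots,v_{i-1},v$ extends to \emph{some} tame sequence, and the index $h$ of that extension need not equal the index $h^*$ of your target. Once $i-1\ge z$, there exist tame extensions with $h=i-1$ (just complete with $a-i+1$ suitable not-connected vertices), so the pool at step $i$ contains not-connected vertices---potentially $\Omega(n)$ of them, not $O(X)$. Your ordering clause (isolated before bad) does not help, since it only affects the structure of block~2 and block~3, not the transition out of block~1. And the hoped-for ``negligible compared to $X$'' correction is about the wrong quantity: what must be small is the number of not-connected vertices \emph{in the pool} at step $i$, which has nothing to do with $f(A,B)$ for any particular target $A$.

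The paper resolves this with a different three-block structure. Set $s=\lfloor\sqrt k/2\rfloor$ and declare $v_1,\dots,v_a$ tame if: $v_1,\dots,v_{a-s}$ are not connected to $B$; $v_{a-s+1},\dots,v_{a-s+h}$ are connected to $B$; and $v_{a-s+h+1},\dots,v_a$ are isolated in $B\cup\{v_1,\dots,v_a\}$. The first block has \emph{fixed} length $a-s$ depending only on $B$, so for $i\le a-s$ the pool is trivially contained in the not-connected vertices. For $i>a-s$, since the last block consists of vertices isolated in the final set, the edge count reaches $\ell$ exactly at position $a-s+h$; hence $e(B\cup\{v_1,\dots,v_{i-1}\})<\ell$ forces $v_i$ connected and $=\ell$ forces $v_i$ not connected, exactly as in the tidy-sequence argument. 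The price is that the $f(A,B)$ bad vertices must all land in the first block rather than the third; a uniformly random placement of the $a-h$ not-connected vertices into positions $\{1,\dots,a-s\}\cup\{a-s+h+1,\dots,a\}$ puts a given bad vertex into the third block with probability $(s-h)/(a-h)\le s/a$, and condition~(iii) gives $f(A,B)\cdot s/a\le\tfrac14$. This yields at least $\tfrac34\,h(A,B)!\,(a-h(A,B))!$ tame labelings of $A\setminus B$, and the rest of your argument then goes through with the extra factor $4/3$.
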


We will prove Lemma \ref{lemma-B-nice-bound} in Subsection \ref{subsect-lemma-proof} at the end of this section. First, we will prove Lemmas \ref{propo-1} and \ref{propo-2} in the next two subsections assuming Lemma \ref{lemma-B-nice-bound}. Most of the arguments will be quite similar to those we used in the previous section to prove Lemma \ref{propo-3}. However, some of the details are different, which makes it necessary to prove Lemmas \ref{propo-1} and \ref{propo-2} separately from Lemma \ref{propo-3}.

\subsection{Proof of Lemma \ref{propo-1}}

In this subsection we will prove Lemma \ref{propo-1} assuming Lemma \ref{lemma-B-nice-bound}. As in the statement of Lemma \ref{propo-1}, let $c$ be a real number with $0<c<\sqrt{k}/2$. Note that if $c\leq 32^2 \cdot r$, then
\[32\cdot \sqrt{r}\cdot \frac{1}{\sqrt{c}}\cdot \frac{n^k}{k!}\geq \frac{n^k}{k!}\geq {n \choose k}.\]
So in this case Lemma \ref{propo-1} is trivially true as the total number of $k$-vertex subsets of $V(G)$ is only ${n\choose k}$. Thus, we can assume that $c\geq 32^2 \cdot r$.

We start by proving that for each fixed $k$-vertex subset $A\su V(G)$ with the properties in Lemma \ref{propo-1}, we have a probability of at least $\frac{1}{4}$ to obtain a $z$-nice pair $(A,B)$ when choosing a random subset $B\su A$ by picking each element of $A$ independently with probability $p$ (for appropriate choices of the parameters $z$ and $p$).

\begin{lemma}\label{lemma-A-1}Let $z=\frac{1}{32r}c$ and $p=1-\frac{1}{8r}$. Then for every $k$-vertex subset $A\su V(G)$ with $e(A)=\l$ and $c\leq m(A)\leq \sqrt{k}/2$, the following holds: If we choose a subset $B\su A$ randomly by taking each element of $A$ into $B$ with probability $p$ (independently for all elements of $A$), then with probability at least $\frac{1}{4}$ the resulting pair $(A,B)$ is $z$-nice.
\end{lemma}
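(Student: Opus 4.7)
The plan is to verify the four conditions of Definition \ref{defi-nice} separately and conclude by a union bound over their failure events. Under the assumption $c \geq 32^2 r$ inherited from the outline of Lemma \ref{propo-1}'s proof (the complementary regime being dispatched there directly), the inequality $c \leq \sqrt{k}/2$ yields $k \geq 2^{22} r^2$, so $k$ is very large compared to $r$. Condition (i) holds by hypothesis on $A$, and the upper bound $h(A,B) \leq \sqrt{k}/2$ in (ii) is immediate from $h(A,B) \leq m(A,B) \leq m(A) \leq \sqrt{k}/2$. Setting $q = 1-p = 1/(8r)$, each vertex of $A$ lies in $A \setminus B$ independently with probability $q$, and applying Chernoff to $|A \setminus B| \sim \mathrm{Bin}(k,q)$, whose mean $k/(8r)$ dwarfs $\sqrt{k}$ in our regime, yields $|A \setminus B| \geq k/(16r) \geq \sqrt{k}$ except with probability $\exp(-\Omega(k/r))$, verifying (iv).

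The heart of the argument is the lower bound of (ii) and condition (iii), which I would handle via the decomposition $h(A,B) = m(A,B) - f(A,B)$. On the one hand, $m(A,B) \sim \mathrm{Bin}(m(A), q)$, so a Chernoff bound gives $m(A,B) \geq m(A)/(16r)$ except with probability at most $\exp(-m(A)/(64r)) \leq e^{-16}$, using $m(A) \geq c \geq 1024r$. On the other hand, for each of the $m(A)$ vertices $v$ non-isolated in $A$, fixing any edge $e_v \subseteq A$ containing $v$ shows that $v$ contributes to $f(A,B)$ only when $v \in A\setminus B$ and $e_v \setminus \{v\} \not\subseteq B$, whose joint probability is at most $q(1 - p^{|e_v|-1}) \leq q(r-1)q \leq rq^2 = 1/(64r)$ by Bernoulli's inequality. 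Linearity then gives $\E[f(A,B)] \leq m(A)/(64r)$, and Markov yields $f(A,B) \leq m(A)/(32r)$ with probability at least $1/2$.

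On the intersection of these three events (total failure probability at most $e^{-16} + 1/2 + o(1) < 3/4$), we have
\[h(A,B) = m(A,B) - f(A,B) \geq \frac{m(A)}{16r} - \frac{m(A)}{32r} = \frac{m(A)}{32r} \geq \frac{c}{32r} = z,\]
verifying the lower bound in (ii), and (using $m(A) \leq \sqrt{k}/2$)
\[2\sqrt{k}\, f(A,B) \leq 2\sqrt{k} \cdot \frac{m(A)}{32r} \leq \frac{k}{32r} \leq |A\setminus B|,\]
verifying (iii). The probability that $(A,B)$ is $z$-nice is therefore at least $1/4$. The main obstacle is that $f(A,B)$ is not a sum of independent indicators (whether a vertex is connected to $B$ depends on many correlated edge events), so we are restricted to Markov's inequality rather than a Chernoff bound on $f$. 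This forces a delicate balancing of the parameters $z = c/(32r)$ and $q = 1/(8r)$: the factor-of-two losses from both Chernoff (on $m(A,B)$) and Markov (on $f$) must leave the guaranteed gap $m(A)/(32r)$ just large enough to exceed $z$.
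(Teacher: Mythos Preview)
Your proof is correct and follows essentially the same approach as the paper: bound $\E[f(A,B)]\leq r q^{2}m(A)$ and apply Markov, apply Chernoff to the binomials $m(A,B)$ and $|A\setminus B|$, then combine via a union bound and verify the four conditions of Definition~\ref{defi-nice}. The only differences are cosmetic (you squeeze $e^{-16}$ out of the Chernoff bound where the paper is content with $e^{-3}\leq 1/8$, and you write ``$o(1)$'' for the $|A\setminus B|$ failure probability where the paper records an explicit $1/8$), but the structure and all the key inequalities are identical.
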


\begin{proof}Fix a subset $A\su V(G)$ with $\vert A\vert=k$ as well as $e(A)=\l$ and $c\leq m(A)\leq \sqrt{k}/2$. As in the lemma, let the subset $B\su A$ be chosen by randomly picking elements of $A$ independently with probability $p$. Let $q=1-p=\frac{1}{8r}$, then each element of $A$ will be in $A\sm B$ with probability $q$.

We claim that with probability at least $\frac{1}{2}$ we have
\begin{equation}\label{ineq-fAB}
f(A,B)\leq 2r\cdot q^2 \cdot m(A)=\frac{1}{32r}\cdot m(A).
\end{equation}
Recall that $f(A,B)$ counts the number of vertices in $A\sm B$ that are non-isolated in $A$ but not connected to $B$. In total there are $m(A)$ non-isolated vertices $v$ in $A$. For each such $v$ in order to count towards $f(A,B)$, it must satisfy $v\in A\sm B$ and for each edge $e\su A$ with $v\in e$ there must be at least one vertex $v'\in e\sm \lbrace v\rbrace$ with $v'\in A\sm B$. The probability for $v\in A\sm B$ is exactly $q$. Let us just pick one edge $e\su A$ with $v\in e$ (such an edge exists as $v$ is non-isolated in $A$). The probability for there to exist $v'\in e\sm \lbrace v\rbrace$ with $v'\in A\sm B$ is at most $(r-1)\cdot q$ (because there are at most $\vert e\sm \lbrace v\rbrace\vert\leq r-1$ possibilities for $v'$ and each of them is in $A\sm B$ with probability $q$), and this is independent of whether $v\in A\sm B$. Thus, the total probability for $v$ to count towards $f(A,B)$ is at most $(r-1)\cdot q^2$. Summing this over all $m(A)$ choices for $v$, we obtain
\[\mathbb{E}[f(A,B)]\leq (r-1)\cdot q^2\cdot m(A)\leq r\cdot q^2\cdot m(A).\]
Clearly, $f(A,B)$ is a non-negative random variable, so by Markov's inequality we have
\[\mathbb{P}[f(A,B)\geq 2r\cdot q^2\cdot m(A)]\leq \frac{\mathbb{E}[f(A,B)]}{2r\cdot q^2\cdot m(A)}\leq \frac{r\cdot q^2\cdot m(A)}{2r\cdot q^2\cdot m(A)}=\frac{1}{2}.\]
This shows that (\ref{ineq-fAB}) indeed holds with probability at least $\frac{1}{2}$.

Now, we claim that with probability at least $\frac{7}{8}$ we have
\begin{equation}\label{ineq-mAB-low}
m(A,B)\geq \frac{1}{2}\cdot q \cdot m(A)=\frac{1}{16r}\cdot m(A).
\end{equation}
Recall that $m(A,B)$ counts the number of vertices in $A\sm B$ that are non-isolated in $A$. As there are $m(A)$ vertices of $A$ that are non-isolated in $A$, we can see that $m(A,B)$ is binomially distributed with parameters $m(A)$ and $q$. By \cite[Theorem A.1.13]{alon-spencer}, we therefore have 
\begin{multline*}
\mathbb{P}\left[m(A,B)< \frac{1}{2}\cdot q \cdot m(A)\right]<\exp\left(-\frac{\left(\frac{1}{2}\cdot q \cdot m(A)\right)^2}{2\cdot q\cdot m(A)}\right)=\exp\left(-\frac{q}{8}\cdot m(A)\right)\\
=\exp\left(-\frac{1}{64r}\cdot m(A)\right)\leq e^{-3}\leq \frac{1}{8}
\end{multline*}
where we used $m(A)\geq c\geq 32^2\cdot r\geq3\cdot  64r$. So (\ref{ineq-mAB-low}) indeed holds with probability at least $\frac{7}{8}$.

Furthermore, by the assumptions on $A$ we always have
\begin{equation}\label{ineq-mAB-high}
h(A,B)\leq m(A,B)\leq m(A)\leq \sqrt{k}/2.
\end{equation}

Finally, we claim that with probability at least $\frac{7}{8}$ we have
\begin{equation}\label{ineq-AsmB}
\vert A\sm B\vert\geq \frac{1}{2}\cdot q \cdot k=\frac{1}{16r}\cdot k.
\end{equation}
Indeed, $\vert A\sm B\vert$ is binomially distributed with parameters $\vert A\vert=k$ and $q$. So again using \cite[Theorem A.1.13]{alon-spencer}, we have 
\[\mathbb{P}\left[m(A,B)< \frac{1}{2}\cdot q \cdot k\right]<\exp\left(-\frac{\left(\frac{1}{2}\cdot q \cdot k\right)^2}{2\cdot q\cdot k}\right)=\exp\left(-\frac{q}{8}\cdot k\right)=\exp\left(-\frac{1}{64r}\cdot k\right)\leq e^{-3}\leq \frac{1}{8}\]
where we used $k\geq \sqrt{k}/2\geq c\geq 32^2\cdot r\geq 3\cdot 64r$. So (\ref{ineq-AsmB}) indeed holds with probability at least $\frac{7}{8}$.

All in all, (\ref{ineq-fAB}) fails with probability at most $\frac{1}{2}$,  each of (\ref{ineq-mAB-low}) and  (\ref{ineq-AsmB}) fails with probability at most $\frac{1}{8}$, and (\ref{ineq-mAB-high}) always holds. Thus, with probability at least $\frac{1}{4}$ all the inequalities (\ref{ineq-fAB}) to (\ref{ineq-AsmB}) hold. We claim that in this case the pair $(A,B)$ is $z$-nice.

First, note that condition (i) in Definition \ref{defi-nice} is automatically satisfied by the assumptions on $A$. The upper bound $h(A,B)\leq \sqrt{k}/2$ in condition (ii) follows from (\ref{ineq-mAB-high}). Furthermore, note that (\ref{ineq-fAB}) and (\ref{ineq-mAB-low}) imply
\[h(A,B)=m(A,B)-f(A,B)\geq \frac{1}{16r}\cdot m(A)-\frac{1}{32r}\cdot m(A)=\frac{1}{32r}\cdot m(A)\geq \frac{1}{32r}\cdot c=z,\]
which gives the lower bound in condition (ii). For condition (iii), note that the inequalities (\ref{ineq-AsmB}) and (\ref{ineq-fAB}) imply (using $m(A)\leq \sqrt{k}/2\leq \sqrt{k}$)
\[\vert A\sm B\vert\geq \frac{1}{16r}\cdot k=2\sqrt{k}\cdot \frac{1}{32r}\cdot \sqrt{k}\geq 
2\sqrt{k}\cdot \frac{1}{32r}\cdot m(A)\geq 2\sqrt{k}\cdot f(A,B).\]
For condition (iv), note that $16r\leq 32^2\cdot r\leq c\leq \sqrt{k}/2\leq \sqrt{k}$, so (\ref{ineq-AsmB}) gives
\[\vert A\sm B\vert\geq \frac{1}{16r}\cdot k\geq \frac{1}{\sqrt{k}}\cdot k=\sqrt{k}.\]
Thus, the pair $(A,B)$ is indeed $z$-nice if all the inequalities (\ref{ineq-fAB}) to (\ref{ineq-AsmB}) hold. Hence we have shown that $(A,B)$ is a $z$-nice pair with probability at least $\frac{1}{4}$.
\end{proof}

We will now prove Lemma \ref{propo-1}. Let us suppose for contradiction that there are more than
\[32\cdot\sqrt{r}\cdot \frac{1}{\sqrt{c}}\cdot \frac{n^k}{k!}.\]
$k$-vertex subsets $A\su V(G)$ with $e(A)=\l$ and $c\leq m(A)\leq \sqrt{k}/2$. 

Define $z=\frac{1}{32r}c$ and $p=1-\frac{1}{8r}$ as in Lemma \ref{lemma-A-1}. Now, let us choose a pair $(A,B)$ of subsets $B\su A\su V(G)$ with $\vert A\vert=k$ randomly as follows: First, choose $A\su V(G)$ uniformly at random among all subsets of size $\vert A\vert=k$. Then, choose a subset $B\su A$ by taking each element of $A$ into $B$ with probability $p$ (independently for all elements of $A$).

The probability that $A$ satisfies $e(A)=\l$ and $c\leq m(A)\leq \sqrt{k}/2$ is at least
\[32\cdot\sqrt{r}\cdot \frac{1}{\sqrt{c}}\cdot \frac{n^k}{k!}\cdot {n\choose k}^{-1}=32\cdot\sqrt{r}\cdot \frac{1}{\sqrt{c}}\cdot\frac{n^k}{(n)_k}.\]
If this is the case, then by Lemma \ref{lemma-A-1} the pair $(A,B)$ is $z$-nice with probability at least $\frac{1}{4}$. So we obtain
\begin{equation}\label{ineq-ABniceup1}
\mathbb{P}[(A,B)\text{ is }z\text{-nice}]\geq 8\cdot\sqrt{r}\cdot \frac{1}{\sqrt{c}}\cdot\frac{n^k}{(n)_k}.
\end{equation}

However, we can find an upper bound on the probability that $(A,B)$ is $z$-nice by conditioning on $B$. Note that $B$ will always be a subset $B\su V(G)$ with $\vert B\vert \leq k$. So let us fix any subset $B\su V(G)$ with $\vert B\vert \leq k$ and let $a=k-\vert B\vert$. Conditioning on having this particular set $B$ as the result of our random draw of the pair $(A,B)$, there are
\[{n-\vert B\vert\choose k-\vert B\vert}={n-\vert B\vert\choose a}=\frac{(n-\vert B\vert)_a}{a!}\]
possibilities for the set $A$ and they are all equally likely. But by Lemma \ref{lemma-B-nice-bound} at most
\[\frac{4}{3}\cdot\frac{1}{\sqrt{z}}\cdot \frac{n^a}{a!}=\frac{4}{3}\cdot\frac{1}{\sqrt{\frac{1}{32r}c}}\cdot \frac{n^a}{a!}< \frac{4}{3}\cdot 6\cdot \sqrt{r}\cdot\frac{1}{\sqrt{c}}\cdot \frac{n^a}{a!}=8\cdot \sqrt{r}\cdot\frac{1}{\sqrt{c}}\cdot \frac{n^a}{a!}\]
of these possible sets $A$ have the property that $(A,B)$ is $z$-nice. Thus, when we condition on having this set $B$, the probability for $(A,B)$ to be $z$-nice is less than
\[8\cdot \sqrt{r}\cdot\frac{1}{\sqrt{c}}\cdot \frac{n^a}{(n-\vert B\vert)_a}=
8\cdot \sqrt{r}\cdot\frac{1}{\sqrt{c}}\cdot \frac{n^{k-\vert B\vert}}{(n-\vert B\vert)_{k-\vert B\vert}}\leq 8\cdot \sqrt{r}\cdot\frac{1}{\sqrt{c}}\cdot \frac{n^k}{(n)_k}.\]
Since the set $B$ was arbitrary, this shows
\[\mathbb{P}[(A,B)\text{ is }z\text{-nice}]<8\cdot \sqrt{r}\cdot\frac{1}{\sqrt{c}}\cdot \frac{n^k}{(n)_k},\]
a contradiction to (\ref{ineq-ABniceup1}). This finishes the proof of Lemma \ref{propo-1}.

\subsection{Proof of Lemma \ref{propo-2}}

In this subsection we will prove Lemma \ref{propo-2} assuming Lemma \ref{lemma-B-nice-bound}. The proof is very similar to the proof of Lemma \ref{propo-1} in the previous subsection. This time, let $c$ be a real number with $\sqrt{k}/2\leq c\leq k/(32r)$. Note that if $k^{1/4}\leq 44 \cdot \sqrt{r}$, then
\[44\cdot \sqrt{r}\cdot k^{-1/4}\cdot \frac{n^k}{k!}\geq \frac{n^k}{k!}\geq {n \choose k}.\]
and Lemma \ref{propo-2} is trivially true. Thus, we can assume that $k^{1/4}\geq 44 \cdot \sqrt{r}$, which means $k\geq 44^4 \cdot r^2$.

\begin{lemma}\label{lemma-A-2}Let $z=\frac{1}{64r}\sqrt{k}$ and $p=1-\frac{1}{16r}\sqrt{k}\cdot c^{-1}$. Then for every $k$-vertex subset $A\su V(G)$ with $e(A)=\l$ and $c\leq m(A)\leq 2c$, the following holds: If we choose a subset $B\su A$ randomly by taking each element of $A$ into $B$ with probability $p$ (independently for all elements of $A$), then with probability at least $\frac{1}{4}$ the resulting pair $(A,B)$ is $z$-nice.
\end{lemma}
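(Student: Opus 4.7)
The approach mirrors the proof of Lemma \ref{lemma-A-1}, with the parameters retuned for the regime $\sqrt{k}/2\leq c\leq k/(32r)$ and using the standing reduction $k\geq 44^4 r^2$ from the opening of the proof of Lemma \ref{propo-2}. Fix $A\su V(G)$ with $\vert A\vert=k$, $e(A)=\l$ and $c\leq m(A)\leq 2c$, and write $q=1-p=\sqrt{k}/(16rc)$, so that each element of $A$ lies independently in $A\sm B$ with probability $q$. The bounds $\sqrt{k}/2\leq c\leq k/(32r)$ translate into $q\leq 1/(8r)$ and $qk\geq 2\sqrt{k}$ respectively; these two inequalities drive the whole argument.

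The plan is to introduce four events whose intersection forces $(A,B)$ to be $z$-nice. First, by the one-edge-per-vertex calculation used to prove \eqref{ineq-fAB}, $\mathbb{E}[f(A,B)]\leq (r-1)q^2 m(A)$, whence Markov gives
\[
\text{(A)}\colon\ f(A,B)\leq 2rq^2 m(A)\quad\text{with probability}\geq 1/2.
\]
Second, $m(A,B)\sim\mathrm{Bin}(m(A),q)$ and $|A\sm B|\sim\mathrm{Bin}(k,q)$ have means $qm(A)\geq qc=\sqrt{k}/(16r)$ and $qk\geq 2\sqrt{k}$, both large enough (thanks to $k\geq 44^4 r^2$) for the Chernoff inequalities of \cite[Theorem A.1.13]{alon-spencer} to make each of
\[
\text{(B)}\colon m(A,B)\geq \tfrac12 qm(A),\quad \text{(C)}\colon m(A,B)\leq 2qm(A),\quad \text{(D)}\colon |A\sm B|\geq \tfrac12 qk
\]
fail with probability at most $1/16$. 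A union bound then yields that (A)--(D) hold simultaneously with probability at least $1-\tfrac12-3\cdot\tfrac{1}{16}=\tfrac{5}{16}>\tfrac14$.

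It remains to check that when (A)--(D) all hold, $(A,B)$ is $z$-nice in the sense of Definition \ref{defi-nice}. Condition (i) is by hypothesis on $A$; the upper bound in (ii) follows from (C) via $m(A,B)\leq 2qm(A)\leq \sqrt{k}/(4r)\leq \sqrt{k}/2$; the lower bound in (ii) uses (A) and (B), since $q\leq 1/(8r)$ gives
\[
h(A,B)=m(A,B)-f(A,B)\geq qm(A)\!\left(\tfrac12-2rq\right)\geq \tfrac14 qm(A)\geq \tfrac14 qc=\frac{\sqrt{k}}{64r}=z;
\]
and condition (iv) follows from (D) together with $c\leq k/(32r)$, which force $qk/2\geq\sqrt{k}$. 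The delicate part — and what I expect to be the main obstacle — is condition (iii): combining (A) and (D) gives $|A\sm B|\geq qk/2=k\sqrt{k}/(32rc)$ and $2\sqrt{k}\cdot f(A,B)\leq 8\sqrt{k}\cdot rq^2 c=k\sqrt{k}/(32rc)$, so the two sides are \emph{exactly} equal. This tight match (together with the similarly tight (iv)) is precisely what pins down the value of $q$ and hence of $p$: any larger $q$ would destroy (iii), while any smaller $q$ would destroy the lower bound in (ii). This barely-sufficient slack is the real difficulty, since it means the Markov and Chernoff constants chosen in (A)--(D) cannot be loosened even by a small factor.
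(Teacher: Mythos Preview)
Your argument is correct and follows essentially the same four-event strategy as the paper. The one small difference is your event (C): you bound $m(A,B)\leq 2qm(A)$ and then deduce $m(A,B)\leq \sqrt{k}/(4r)\leq \sqrt{k}/2$, whereas the paper bounds $m(A,B)\leq \sqrt{k}/2$ directly via an additive upper-tail Chernoff bound (using that $qm(A)\leq \sqrt{k}/8$ and $m(A)\leq k/(16r)$). Both versions work; note only that \cite[Theorem A.1.13]{alon-spencer} is a lower-tail bound, so for (C) you need a different Chernoff inequality (any standard multiplicative upper-tail form suffices given $qm(A)\geq \sqrt{k}/(16r)$ and $k\geq 44^4 r^2$). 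Your observation that condition (iii) is tight is exactly the paper's computation as well.
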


\begin{proof}Fix a subset $A\su V(G)$ with $\vert A\vert=k$ as well as $e(A)=\l$ and $c\leq m(A)\leq 2c$. As in the lemma, let the subset $B\su A$ be chosen by randomly picking elements of $A$ independently with probability $p$. Let $q=1-p=\frac{1}{16r}\sqrt{k}\cdot c^{-1}$, so each element of $A$ will be in $A\sm B$ with probability $q$.

We claim that with probability at least $\frac{1}{2}$ we have
\begin{equation}\label{ineq-fAB-2}
f(A,B)\leq 2r\cdot q^2 \cdot m(A).
\end{equation}
With the same argument as in the proof of Lemma \ref{lemma-A-1}, we can see that $\mathbb{E}[f(A,B)]\leq r\cdot q^2\cdot m(A)$ and again by Markov's inequality we have
\[\mathbb{P}[f(A,B)\geq 2r\cdot q^2\cdot m(A)]\leq \frac{\mathbb{E}[f(A,B)]}{2r\cdot q^2\cdot m(A)}\leq \frac{r\cdot q^2\cdot m(A)}{2r\cdot q^2\cdot m(A)}=\frac{1}{2}.\]
This shows that (\ref{ineq-fAB-2}) indeed holds with probability at least $\frac{1}{2}$.

Next, we claim that with probability at least $\frac{15}{16}$ we have
\begin{equation}\label{ineq-mAB-low-2}
m(A,B)\geq \frac{1}{2}\cdot q \cdot m(A).
\end{equation}
As in the proof of Lemma \ref{lemma-A-1} we see that $m(A,B)$ is binomially distributed with parameters $m(A)$ and $q$. So again using \cite[Theorem A.1.13]{alon-spencer}, we have 
\begin{multline*}
\mathbb{P}\left[m(A,B)< \frac{1}{2}\cdot q \cdot m(A)\right]<\exp\left(-\frac{\left(\frac{1}{2}\cdot q \cdot m(A)\right)^2}{2\cdot q\cdot m(A)}\right)=\exp\left(-\frac{1}{8}\cdot q\cdot m(A)\right)\\
\leq\exp\left(-\frac{1}{8}\cdot q\cdot c\right)=\exp\left(-\frac{1}{8}\cdot \frac{1}{16r}\cdot \sqrt{k}\right)=\exp\left(-\frac{1}{128r}\cdot \sqrt{k}\right)\leq e^{-4}\leq \frac{1}{16}
\end{multline*}
where this time we used $m(A)\geq c$ and $\sqrt{k}\geq 44^2 \cdot r\geq 4\cdot 128r$. So (\ref{ineq-mAB-low-2}) indeed holds with probability at least $\frac{15}{16}$.

Furthermore, we claim that with probability at least $\frac{15}{16}$ we have
\begin{equation}\label{ineq-mAB-high-2}
m(A,B)\leq \sqrt{k}/2.
\end{equation}
Note that
\[q\cdot m(A)\leq q\cdot 2c=2\cdot \frac{1}{16r}\sqrt{k}=\frac{1}{8r}\sqrt{k}\leq \frac{\sqrt{k}}{8}.\]
Therefore, again using that $m(A,B)$ is binomially distributed with parameters $m(A)$ and $q$, the standard Chernoff bound for the upper tail of binomially distributed variables (see, for example, \cite[Theorem A.1.4]{alon-spencer}) gives
\begin{multline*}
\mathbb{P}\left[m(A,B)> \frac{\sqrt{k}}{2}\right]<\exp\left(-2\cdot \left(\frac{\sqrt{k}}{2}-q\cdot m(A)\right)^2\cdot \frac{1}{m(A)}\right)\leq \exp\left(-2\cdot \left(\frac{3}{8}\sqrt{k}\right)^2\cdot \frac{1}{m(A)}\right)\\
=\exp\left(-\frac{9}{32}\cdot k\cdot \frac{1}{m(A)}\right)\leq \exp\left(-\frac{9}{32}\cdot k\cdot \frac{1}{k/16}\right)=\exp\left(-\frac{9}{2}\right)\leq e^{-4}\leq \frac{1}{16}.
\end{multline*}
Here we used that $m(A)\leq 2c\leq k/(16r)\leq k/16$. So (\ref{ineq-mAB-high-2}) holds with probability at least $\frac{15}{16}$.

Finally, we claim that with probability at least $\frac{15}{16}$ we have
\begin{equation}\label{ineq-AsmB-2}
\vert A\sm B\vert\geq \frac{1}{2}\cdot q \cdot k.
\end{equation}
Using that $\vert A\sm B\vert$ is binomially distributed with parameters $\vert A\vert=k$ and $q$ and applying \cite[Theorem A.1.13]{alon-spencer}, we obtain
\begin{multline*}
\mathbb{P}\left[m(A,B)< \frac{1}{2}\cdot q \cdot k\right]<\exp\left(-\frac{\left(\frac{1}{2}\cdot q \cdot k\right)^2}{2\cdot q\cdot k}\right)=\exp\left(-\frac{q}{8}\cdot k\right)=\exp\left(-\frac{1}{128r}\cdot \frac{\sqrt{k}}{c}\cdot k\right)\\
\leq \exp\left(-\frac{1}{128r}\cdot \frac{\sqrt{k}}{k/(32r)}\cdot k\right)=\exp\left(-\frac{\sqrt{k}}{4}\right)\leq e^{-4}\leq \frac{1}{16},
\end{multline*}
where we used $c\leq k/(32r)$ and $k\geq 44^4 \cdot r^2\geq 16^2$. So (\ref{ineq-AsmB-2}) holds with probability at least $\frac{15}{16}$.

All in all, (\ref{ineq-fAB-2}) fails with probability at most $\frac{1}{2}$,  and each of (\ref{ineq-mAB-low-2}), (\ref{ineq-mAB-high-2}) and  (\ref{ineq-AsmB-2}) fails with probability at most $\frac{1}{16}$. Thus, with probability at least $\frac{1}{4}$ all the inequalities (\ref{ineq-fAB-2}) to (\ref{ineq-AsmB-2}) hold. We claim that in this case the pair $(A,B)$ is $z$-nice.

First, note that condition (i) in Definition \ref{defi-nice} is automatically satisfied by the assumptions on $A$. For the upper bound in condition (ii), note that  (\ref{ineq-mAB-high-2}) implies $h(A,B)\leq m(A,B)\leq \sqrt{k}/2$. For the lower bound in condition (ii), first note that by the assumption $c\geq \sqrt{k}/2$ we have
\[q=\frac{1}{16r}\cdot \frac{\sqrt{k}}{c}\leq \frac{1}{16r}\cdot \frac{\sqrt{k}}{\sqrt{k}/2}=\frac{1}{8r}.\]
Hence (\ref{ineq-fAB-2}) and (\ref{ineq-mAB-low-2}) imply
\begin{multline*}
h(A,B)=m(A,B)-f(A,B)\geq \frac{1}{2}\cdot q \cdot m(A)-2r\cdot q^2 \cdot m(A)=q\cdot m(A)\cdot \left(\frac{1}{2}-2r\cdot q\right)\\
\geq q\cdot m(A)\cdot \left(\frac{1}{2}-2r\cdot \frac{1}{8r}\right)=\frac{1}{4}\cdot q\cdot m(A)\geq \frac{1}{4}\cdot q\cdot c=\frac{1}{4}\cdot\frac{1}{16r}\cdot \sqrt{k}=\frac{1}{64r}\cdot \sqrt{k}=z,
\end{multline*}
which gives the lower bound in condition (ii). For condition (iii), note that the inequalities (\ref{ineq-fAB-2}) and (\ref{ineq-AsmB-2}) imply (using $m(A)\leq 2c$)
\[2\sqrt{k}\cdot f(A,B)\leq 2\sqrt{k}\cdot 2r\cdot q^2 \cdot m(A)\leq 4\sqrt{k}\cdot r\cdot q^2 \cdot 2c
=8\sqrt{k}\cdot r\cdot q \cdot \frac{1}{16r}\sqrt{k}=\frac{1}{2}\cdot q \cdot k\leq \vert A\sm B\vert.\]
For condition (iv), note that using $c\leq k/(32r)$, inequality (\ref{ineq-AsmB-2}) gives
\[\vert A\sm B\vert\geq \frac{1}{2}\cdot q\cdot k=\frac{1}{2}\cdot \frac{1}{16r}\cdot \frac{\sqrt{k}}{c}\cdot k\geq\frac{1}{32r}\cdot \frac{\sqrt{k}}{k/(32r)}\cdot k =\sqrt{k}.\]
Thus, the pair $(A,B)$ is indeed $z$-nice if the inequalities (\ref{ineq-fAB-2}) to (\ref{ineq-AsmB-2}) hold. Hence we have shown that $(A,B)$ is a $z$-nice pair with probability at least $\frac{1}{4}$.
\end{proof}

Let us now prove Lemma \ref{propo-2}. So suppose for contradiction that there are more than
\[44\cdot \sqrt{r}\cdot k^{-1/4}\cdot \frac{n^k}{k!}\]
$k$-vertex subsets $A\su V(G)$ with $e(A)=\l$ and $c\leq m(A)\leq 2c$. 

Define $z=\frac{1}{64r}\sqrt{k}$ and $p=1-\frac{1}{16r}\sqrt{k}\cdot c^{-1}$ as in Lemma \ref{lemma-A-2}. Again, let us choose a pair $(A,B)$ of subsets $B\su A\su V(G)$ with $\vert A\vert=k$ randomly as follows: First, choose $A\su V(G)$ uniformly at random among all subsets of size $\vert A\vert=k$. Then, choose a subset $B\su A$ by taking each element of $A$ into $B$ with probability $p$ (independently for all elements of $A$).

The probability that $A$ satisfies $e(A)=\l$ and $c\leq m(A)\leq 2c$ is at least
\[44\cdot \sqrt{r}\cdot k^{-1/4}\cdot \frac{n^k}{k!}\cdot {n\choose k}^{-1}=44\cdot \sqrt{r}\cdot k^{-1/4}\cdot\frac{n^k}{(n)_k},\]
and in this case, by Lemma \ref{lemma-A-1} the pair $(A,B)$ is $z$-nice with probability at least $\frac{1}{4}$. Thus, 
\begin{equation}\label{ineq-ABniceup2}
\mathbb{P}[(A,B)\text{ is }z\text{-nice}]\geq 11\cdot \sqrt{r}\cdot k^{-1/4}\cdot\frac{n^k}{(n)_k}.
\end{equation}

On the other hand, we can again find an upper bound on the probability that $(A,B)$ is $z$-nice by conditioning on $B$. Let us fix any subset $B\su V(G)$ with $\vert B\vert \leq k$ and let $a=k-\vert B\vert$. Conditioning on having this particular set $B$ as the result of our random draw of the pair $(A,B)$, there are
\[{n-\vert B\vert\choose k-\vert B\vert}={n-\vert B\vert\choose a}=\frac{(n-\vert B\vert)_a}{a!}\]
possibilities for the set $A$ and they are all equally likely. By Lemma \ref{lemma-B-nice-bound} at most
\[\frac{4}{3}\cdot\frac{1}{\sqrt{z}}\cdot \frac{n^a}{a!}=\frac{4}{3}\cdot\frac{1}{\sqrt{\frac{1}{64r}\sqrt{k}}}\cdot \frac{n^a}{a!}= \frac{4}{3}\cdot 8\cdot \sqrt{r}\cdot k^{-1/4}\cdot \frac{n^a}{a!}<11\cdot \sqrt{r}\cdot k^{-1/4}\cdot \frac{n^a}{a!}\]
of these possible sets $A$ have the property that $(A,B)$ is $z$-nice. Thus, conditioning on having this set $B$, the probability for $(A,B)$ to be $z$-nice is less than
\[11\cdot \sqrt{r}\cdot k^{-1/4}\cdot \frac{n^a}{(n-\vert B\vert)_a}
=11\cdot \sqrt{r}\cdot k^{-1/4}\cdot \frac{n^{k-\vert B\vert}}{(n-\vert B\vert)_{k-\vert B\vert}}\leq 11\cdot \sqrt{r}\cdot k^{-1/4}\cdot \frac{n^k}{(n)_k}.\]
Since the set $B$ was arbitrary, we obtain
\[\mathbb{P}[(A,B)\text{ is }z\text{-nice}]<11\cdot \sqrt{r}\cdot k^{-1/4}\cdot \frac{n^k}{(n)_k},\]
which contradicts (\ref{ineq-ABniceup2}). This finishes the proof of Lemma \ref{propo-2}.

\subsection{Proof of Lemma \ref{lemma-B-nice-bound}}\label{subsect-lemma-proof}

In this subsection, we will prove Lemma \ref{lemma-B-nice-bound}. The proof is overall quite similar to the proof of Lemma \ref{lemma-B-pleasant-bound}, but some of the details differ significantly.

As in the statement of the lemma, fix $z>0$ as well as a subset $B\su V(G)$ of size $\vert B\vert\leq k$, and let $a=k-\vert B\vert$. Furthermore, let $s=\lfloor \sqrt{k}/2\rfloor$. 

Note that for every $z$-nice pair $(A,B)$ we have $\vert A\vert=k$ by condition (i) in Definition \ref{defi-nice} and therefore $\vert A\sm B\vert=k-\vert B\vert=a$. Thus, by condition (iv) in Definition \ref{defi-nice} we must have $a\geq \sqrt{k}$ if there exists at least one $z$-nice pair $(A,B)$. This means that we may assume that $a\geq \sqrt{k}$, because otherwise Lemma \ref{lemma-B-nice-bound} is trivially true. This in particular implies $a\geq 2\cdot\lfloor \sqrt{k}/2\rfloor=2s$.

Furthermore, if there exists some $z$-nice pair $(A,B)$, we must have $0<z\leq h(A,B)\leq \sqrt{k}/2$ by condition (ii) in Definition \ref{defi-nice}. As $h(A,B)$ is an integer, this implies $h(A,B)\geq 1$ and hence $\sqrt{k}/2\geq 1$. Thus, we must have $s=\lfloor \sqrt{k}/2\rfloor\geq 1$ if there exists at least one $z$-nice pair $(A,B)$. So we may assume $s\geq 1$ from now on.

\begin{definition}\label{defi-tame-sequence} We call a sequence $v_1,\dots,v_a$ of distinct vertices in $V(G)\sm B$ \emph{tame} if $e(B\cup \lbrace v_1,\dots,v_a\rbrace)=\l$ and if there exists a positive integer $h$ with $z\leq h\leq s$ such that
\begin{itemize}
\item none of the vertices $v_1,\dots,v_{a-s}$ is connected to $B$, 
\item each of the vertices $v_{a-s+1},\dots,v_{a-s+h}$ is connected to $B$, and
\item each of the vertices $v_{a-s+h+1},\dots,v_a$ is isolated in $B\cup \lbrace v_1,\dots,v_a\rbrace$.
\end{itemize}
\end{definition}

Note that for a sequence $v_1,\dots,v_a$ as in Definition \ref{defi-tame-sequence}, the vertices $v_{a-s+h+1},\dots,v_a$ cannot be connected to $B$ (since otherwise they would be non-isolated in $B\cup \lbrace v_1,\dots,v_a\rbrace$). Thus, the number $h(B\cup \lbrace v_1,\dots,v_a\rbrace, B)$ of vertices in $\lbrace v_1,\dots,v_a\rbrace$ that are connected to $B$ is precisely $h$. Hence for every tame sequence $v_1,\dots,v_a$, the integer $h$ in Definition \ref{defi-tame-sequence} must equal $h(B\cup \lbrace v_1,\dots,v_a\rbrace, B)$.

Similarly as in the proof of Lemma \ref{lemma-B-pleasant-bound}, we can show that for every set $A\su V(G)$ such that $(A,B)$ is $z$-nice, there are many ways to label the vertices of $A\sm B$ as $v_1,\dots,v_a$ so that $v_1,\dots,v_a$ is a tame sequence:

\begin{claim}\label{claim-count-labelings} Let $A\su V(G)$ be such that $(A,B)$ is a $z$-nice pair. Then there are at least
\[\frac{3}{4}\cdot  h(A,B)!\cdot (a-h(A,B))!\]
different ways to  label the vertices of $A\sm B$ as $v_1,\dots,v_a$ such that $v_1,\dots,v_a$ is a tame sequence.
\end{claim}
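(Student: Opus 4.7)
The plan is to partition $A \sm B$ according to the role each vertex will play in a tame sequence. Specifically, let $H$ denote the $h := h(A,B)$ vertices of $A\sm B$ that are connected to $B$, let $F$ denote the $f := f(A,B)$ vertices that are non-isolated in $A$ but not connected to $B$, and let $I$ denote the remaining $a-h-f$ vertices, which are isolated in $A$. The three prescribed position blocks of a tame sequence---the first $a-s$ positions (not connected to $B$), the middle $h$ positions (connected to $B$), and the final $s-h$ positions (isolated in $A$)---force every vertex of $H$ into the middle block and every vertex of $F$ into the first block, while each vertex of $I$ may sit either in the first block or in the last block. As already observed after Definition \ref{defi-tame-sequence}, the index $h$ appearing there is forced to equal $h(A,B)$, so every tame labeling of $A\sm B$ is of this shape.

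I will then count such labelings directly: $H$ fills the middle block in $h!$ ways, the last block is filled by choosing and arranging $s-h$ of the $a-h-f$ isolated vertices in $\binom{a-h-f}{s-h}(s-h)!$ ways, and the remaining $a-s$ vertices (all of $F$ together with the unused isolated ones) fill the first block in $(a-s)!$ ways. The total is therefore
\[h!\cdot \frac{(a-h-f)!}{(a-s-f)!}\cdot (a-s)!\]
tame labelings. Dividing by $h!(a-h)!$, the claim reduces to showing
\[\frac{(a-h-f)!\,(a-s)!}{(a-s-f)!\,(a-h)!}\;=\;\frac{\binom{a-s}{f}}{\binom{a-h}{f}}\;\geq\;\frac{3}{4}.\]

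The hard part is this last ratio bound, which I will handle via a clean probabilistic interpretation: the ratio equals the probability that a uniformly random $f$-element subset of an $(a-h)$-element set is disjoint from a fixed $(s-h)$-element subset. Letting $X$ denote the number of elements of the fixed subset that get picked, linearity of expectation gives $\E[X]=(s-h)f/(a-h)$, and Markov's inequality then yields $\mathbb{P}[X=0]\geq 1-(s-h)f/(a-h)$. The remaining quantitative step is therefore to verify $4(s-h)f\leq a-h$. The case $f=0$ is trivial, since the ratio then equals $1$. For $f\geq 1$ I write $4(s-h)f=4sf-4hf$; condition (iii) of $z$-niceness ($a\geq 2\sqrt{k}\,f$) combined with $s\leq \sqrt{k}/2$ (from the definition $s=\lfloor \sqrt{k}/2\rfloor$) gives $4sf\leq a$, while the easy bound $4hf\geq h$ (valid for $f\geq 1$ and $h\geq 0$) yields $4(s-h)f\leq a-4hf\leq a-h$, as required.
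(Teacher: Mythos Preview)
Your proof is correct and follows essentially the same approach as the paper. Both arguments partition $A\sm B$ into the vertices connected to $B$, those non-isolated in $A$ but not connected to $B$, and those isolated in $A$, place the first group in the middle block, and then show that at least a $\tfrac{3}{4}$ fraction of the $(a-h)!$ arrangements of the remaining vertices avoid putting an $F$-vertex into the last block; your Markov bound $\mathbb{P}[X\ge 1]\le \E[X]=(s-h)f/(a-h)$ is exactly the union bound the paper uses, and your inequality $4(s-h)f\le a-h$ is a mild sharpening of the paper's $4sf\le a$, derived from the same condition~(iii). The only cosmetic difference is that you first write down the exact count $h!\cdot\frac{(a-h-f)!}{(a-s-f)!}\cdot(a-s)!$ and then bound the ratio, whereas the paper argues probabilistically from the start.
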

\begin{proof}First, note that by condition (i) in Definition \ref{defi-nice} we have $\vert A\sm B\vert=k-\vert B\vert=a$.

Let $h=h(A,B)$ and note that by condition (ii) in Definition \ref{defi-nice} we have $z\leq h\leq \sqrt{k}/2$, and therefore $h\leq \lfloor \sqrt{k}/2\rfloor=s$ as $h=h(A,B)$ is an integer. Thus, $z\leq h\leq s$. From $z>0$, we also obtain $h\geq 1$. 

There are exactly $h=h(A,B)$ vertices in $A\sm B$ that are connected to $B$. Label these $h$ vertices as $v_{a-s+1},\dots,v_{a-s+h}$; there are $h!$ ways how to do that. Now, pick a labeling of the remaining $a-h$ vertices in $A\sm B$ as $v_1,\dots,v_{a-s}$ and $v_{a-s+h+1},\dots,v_a$ uniformly at random from the $(a-h)!$ possible labelings.

Then $v_1,\dots,v_a$ are always distinct vertices in $V(G)\sm B$ and furthermore $e(B\cup \lbrace v_1,\dots,v_a\rbrace)=e(A)=\l$ (by condition (i) in Definition \ref{defi-nice}). We already saw that $z\leq h\leq s$. Note that by the choice of the labeling, the first and second condition in Definition \ref{defi-tame-sequence} are also automatically satisfied. So we just need to make sure that with a sufficiently high probability the third condition is satisfied as well.

In order for the third condition in Definition \ref{defi-tame-sequence} to fail, there needs to be a vertex $v\in A\sm B$ that is non-isolated in $B\cup \lbrace v_1,\dots,v_a\rbrace=A$ and obtains a label $v_i$ with $a-s+h+1\leq i\leq a$. In particular this vertex is not one of the $h=h(A,B)$ vertices in $A\sm B$ that are connected to $B$, because otherwise $v$ would have already obtained one of the labels $v_{a-s+1},\dots,v_{a-s+h}$ in the first step. This means that $v$ is one of the $f(A,B)=m(A,B)-h(A,B)$ vertices in $A\sm B$ that are non-isolated in $A$ but not connected to $B$. So there are at most $f(A,B)$ possibilities for the vertex $v$ and for each of them the probability that $v$ is labeled $v_i$ for some $a-s+h+1\leq i\leq a$ is at most
\[\frac{s-h}{a-h}\leq \frac{s}{a}.\]
So by a simple union bound, the third condition in Definition \ref{defi-tame-sequence} holds with probability at least
\[1-f(A,B)\cdot \frac{s}{a}= 1-\frac{s\cdot f(A,B)}{a}\geq 1-\frac{1}{4}=\frac{3}{4},\]
where we used that
\[a=\vert A\sm B\vert \geq  2\sqrt{k}\cdot f(A,B)\geq 4\cdot \lfloor\sqrt{k}/2\rfloor\cdot f(A,B)=4\cdot s\cdot f(A,B)\]
by condition (iii) in Definition \ref{defi-nice}.

Hence for at least $\frac{3}{4}(a-h)!$ labelings of the $a-h$ vertices in the second step as $v_1,\dots,v_{a-s}$ and $v_{a-s+h+1},\dots,v_a$, the resulting sequence $v_1,\dots,v_a$ is tame. Recall that we also had $h!$ choices in the first step (to distribute the labels $v_{a-s+1},\dots,v_{a-s+h}$). Thus, the total number of ways to  label the vertices of $A\sm B$ as $v_1,\dots,v_a$ such that $v_1,\dots,v_a$ is a tame sequence is at least
\[h!\cdot\frac{3}{4}(a-h)!=\frac{3}{4}\cdot  h!\cdot (a-h)!=\frac{3}{4}\cdot  h(A,B)!\cdot (a-h(A,B))!,\]
as desired.
\end{proof}

In particular, for each subset $A\su V(G)$ such that $(A,B)$ is a $z$-nice pair, there is a labeling of $A\sm B$ as $v_1,\dots,v_a$ such that $v_1,\dots,v_a$ is a tame sequence. Thus, we may assume that there exists at least one tame sequence $v_1,\dots,v_a$, as otherwise Lemma \ref{lemma-B-nice-bound} is trivially true.

Note that for every tame sequence $v_1,\dots,v_a$ (with the integer $h$ as in Definition \ref{defi-tame-sequence}), we must have $e(B\cup \lbrace v_1,\dots,v_{a-s+h}\rbrace)=e(B\cup \lbrace v_1,\dots,v_a\rbrace)=\l$, because the vertices $v_{a-s+h+1},\dots,v_a$ are all isolated in $B\cup \lbrace v_1,\dots,v_a\rbrace$. On the other hand, we have $e(B\cup \lbrace v_1,\dots,v_{a-s+h-1}\rbrace)<e(B\cup \lbrace v_1,\dots,v_{a-s+h}\rbrace)=\l$, because $v_{a-s+h}$ is connected to $B$.

Therefore we obtain that $e(B\cup \lbrace v_1,\dots,v_{i-1}\rbrace)<\l$ for all $i$ with $a-s+1\leq i\leq a-s+h$ and that $e(B\cup \lbrace v_1,\dots,v_{i-1}\rbrace)=\l$ for all $i$ with $a-s+h+1\leq i\leq a$. Therefore, for all $i$ with $a-s+1\leq i\leq a$, the following holds: If $e(B\cup \lbrace v_1,\dots,v_{i-1}\rbrace)<\l$, then $v_i$ is connected $B$, but if $e(B\cup \lbrace v_1,\dots,v_{i-1}\rbrace)=\l$, then $v_i$ is not connected to $B$ (as $v_i$ must be isolated in $B\cup \lbrace v_1,\dots,v_a\rbrace$). Furthermore, for all $i$ with $1\leq i\leq a-s$, the vertex $v_i$ is also not connected to $B$.

As in the proof of Lemma \ref{lemma-B-pleasant-bound}, let us choose a random tame sequence $v_1,\dots,v_a$ according to the following procedure: If for some $1\leq i\leq a$ the vertices $v_1,\dots, v_{i-1}$ have already been chosen, choose $v_{i}$ uniformly at random among all vertices in $V(G)\sm (B\cup \lbrace v_1,\dots, v_{i-1}\rbrace)$ such that $v_1,\dots, v_{i}$ can be extended to some tame sequence.

\begin{claim}\label{claim-prob-nice-seq} Let $v_1,\dots,v_a$ be a tame sequence and let the integer $h$ be as in Definition \ref{defi-tame-sequence}. Then the probability that the sequence $v_1,\dots,v_a$ was chosen during the random procedure is at least
\[\frac{a^a}{h^h\cdot (a-h)^{a-h}}\cdot \frac{1}{n^a}.\]
\end{claim}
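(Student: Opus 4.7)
The plan is to mirror the proof of Claim~\ref{claim-prob-tidy-seq}, adapting its two-part case analysis to the three-part structure in Definition~\ref{defi-tame-sequence}. I will let $X$ denote the number of vertices in $V(G)\sm B$ that are connected to $B$, and first verify $1\le X\le n-1$. Since $z>0$ and $h\ge z$ is a positive integer, we have $h\ge 1$, so $v_{a-s+1}$ witnesses $X\ge 1$. Symmetrically, $a-h\ge a-s\ge s\ge 1$ (using $a\ge 2s$ from the reduction before the definition), so $v_a$ witnesses that at least one vertex of $V(G)\sm B$ is not connected to $B$, giving $X\le n-1$.

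Next I will analyze the random procedure one step at a time, bounding the number of eligible choices for $v_i$ (i.e.\ vertices in $V(G)\sm(B\cup\{v_1,\dots,v_{i-1}\})$ such that the truncated sequence extends to some tame sequence). The key input is the observation made just before the claim: along any tame sequence, $e(B\cup\{v_1,\dots,v_{i-1}\})<\l$ exactly for $a-s+1\le i\le a-s+h$, and in this range $v_i$ must be connected to $B$, while in the complementary ranges $1\le i\le a-s$ and $a-s+h+1\le i\le a$ the vertex $v_i$ cannot be connected to $B$ (in the former by the first bullet of Definition~\ref{defi-tame-sequence}, in the latter because isolated vertices in $B\cup\{v_1,\dots,v_a\}$ cannot be connected to $B$). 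Hence for the $h$ indices $a-s+1\le i\le a-s+h$ there are at most $X$ eligible choices, and for the remaining $(a-s)+(s-h)=a-h$ indices there are at most $n-X$ eligible choices. Multiplying over all $a$ steps, the probability of drawing the specific sequence $v_1,\dots,v_a$ is at least
\[
\left(\frac{1}{X}\right)^{\!h}\cdot\left(\frac{1}{n-X}\right)^{\!a-h}.
\]

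Writing $X=xn$ with $0<x<1$ and applying the inequality of arithmetic and geometric means to $h$ copies of $x/h$ and $a-h$ copies of $(1-x)/(a-h)$ yields
\[
x^h(1-x)^{a-h}=h^h(a-h)^{a-h}\left(\frac{x}{h}\right)^{\!h}\!\left(\frac{1-x}{a-h}\right)^{\!a-h}\le\frac{h^h(a-h)^{a-h}}{a^a},
\]
so the lower bound above becomes exactly $a^a/(h^h(a-h)^{a-h}\cdot n^a)$, as claimed. I do not anticipate any genuine obstacle; the only point that requires care is the bookkeeping that confirms the tame-sequence constraints produce precisely $h$ ``connected'' positions and $a-h$ ``non-connected'' positions, so that the exponents match and the AM-GM step applies verbatim as in Claim~\ref{claim-prob-tidy-seq}.
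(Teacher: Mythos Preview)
Your approach is essentially identical to the paper's proof: define $X$, bound the number of eligible choices at each step by $X$ or $n-X$ according to whether $v_i$ must be connected to $B$ or not, and finish with the same AM--GM computation as in Claim~\ref{claim-prob-tidy-seq}.

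One small slip: your witness for $X\le n-1$ does not always work. You write that $v_a$ is not connected to $B$, but when $h=s$ (which is allowed by Definition~\ref{defi-tame-sequence}) the third block $v_{a-s+h+1},\dots,v_a$ is empty and $v_a=v_{a-s+h}$ lies in the second block, hence \emph{is} connected to $B$. The paper instead uses $v_1$: since $a-s\ge s\ge 1$ the first block is nonempty, and $v_1$ is not connected to $B$ by the first bullet of Definition~\ref{defi-tame-sequence}. Your own chain $a-h\ge a-s\ge s\ge 1$ already shows the first block is nonempty, so simply replacing ``$v_a$'' by ``$v_1$'' (or by ``some $v_i$ with $i\le a-s$'') fixes the argument.
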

\begin{proof}
Let $X$ be the number of vertices in $V(G)\sm B$ that are connected to $B$. Note that $a-s\geq 2s-s\geq 1$ implies that $v_1$ is not connected to $B$, whereas  $v_{a-s+1}$ is connected to $B$. In particular, we see that $0<X<n$.

For every $1\leq i\leq a$, let us analyze the number of choices we have when choosing $v_i$ in the random procedure described above (after having chosen $v_1,\dots,v_{i-1}$).

When choosing $v_i$ for $1\leq i\leq a-s$, we must choose a vertex that is not connected to $B$. Thus, there are at most $n-X$ choices, and the probability that the desired vertex $v_i$ is chosen is at least $1/(n-X)$.

If $a-s+1\leq i\leq a-s+h$, then by the above observations we have $e(B\cup \lbrace v_1,\dots,v_{i-1}\rbrace)<\l$ and, when choosing $v_i$, we must choose a vertex that is connected to $B$. Thus, there are at most $X$ choices, and the probability that the desired vertex $v_i$ is chosen is at least $1/X$.

If $a-s+ h+1\leq i\leq a$, then by the above observations we have $e(B\cup \lbrace v_1,\dots,v_{i-1}\rbrace)=\l$ and, when choosing $v_i$, we must choose a vertex that is not connected to $B$. Thus, there are at most $n-X$ choices, and the probability that the desired vertex $v_i$ is chosen is at least $1/(n-X)$.

So all in all, the probability that we make the desired choice in each step and obtain precisely the sequence $v_1,\dots,v_a$ is at least
\[\left(\frac{1}{n-X}\right)^{a-s}\cdot \left(\frac{1}{X}\right)^h\cdot \left(\frac{1}{n-X}\right)^{s-h}=\left(\frac{1}{X}\right)^h\cdot \left(\frac{1}{n-X}\right)^{a-h}.\]

In exactly the same way as in the proof of Claim \ref{claim-prob-tidy-seq}, we can show that
\[\left(\frac{1}{X}\right)^h\cdot \left(\frac{1}{n-X}\right)^{a-h}\geq \frac{a^a}{h^h\cdot (a-h)^{a-h}}\cdot \frac{1}{n^a}.\]
This finishes the proof of Claim \ref{claim-prob-nice-seq}.
\end{proof}

\begin{corollary}\label{coro-A-nice} Let $A\su V(G)$ be such that $(A,B)$ is a $z$-nice pair. Then, when choosing $v_1,\dots,v_a$ according to the random procedure described above, we have $B\cup \lbrace v_1,\dots,v_a\rbrace=A$ with probability at least
\[\frac{3}{4}\cdot \sqrt{z}\cdot \frac{a!}{n^a}.\]
\end{corollary}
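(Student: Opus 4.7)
The plan is to mirror the strategy used in Corollary \ref{coro-A-pleasant}: combine Claim \ref{claim-count-labelings} (which gives many tame labelings of $A\sm B$) with Claim \ref{claim-prob-nice-seq} (which gives a lower bound on the probability of obtaining any single tame labeling in the random procedure).

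First, I would fix $A\su V(G)$ such that $(A,B)$ is $z$-nice and set $h=h(A,B)$. By Claim \ref{claim-count-labelings}, there are at least $\frac{3}{4}\cdot h!\cdot(a-h)!$ labelings of $A\sm B$ as $v_1,\dots,v_a$ that form a tame sequence; for every such labeling we automatically have $B\cup\lbrace v_1,\dots,v_a\rbrace=A$, and the index in Definition \ref{defi-tame-sequence} must be exactly $h$. Since distinct labelings correspond to disjoint outcomes of the random procedure, Claim \ref{claim-prob-nice-seq} gives
\[\mathbb{P}[B\cup\lbrace v_1,\dots,v_a\rbrace=A]\;\geq\;\frac{3}{4}\cdot h!\cdot(a-h)!\cdot\frac{a^a}{h^h\cdot(a-h)^{a-h}}\cdot\frac{1}{n^a}=\frac{3}{4}\cdot\frac{h!}{h^h}\cdot\frac{(a-h)!}{(a-h)^{a-h}}\cdot\frac{a^a}{a!}\cdot\frac{a!}{n^a}.\]

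Next, I would apply Stirling's formula exactly as in the proof of Corollary \ref{coro-A-pleasant} to obtain
\[\frac{h!}{h^h}\cdot\frac{(a-h)!}{(a-h)^{a-h}}\cdot\frac{a^a}{a!}\;\geq\;\frac{2\pi}{e}\cdot\sqrt{\frac{h\cdot(a-h)}{a}}.\]
The main point then is to bound $h(a-h)/a$ from below. By condition (ii) of Definition \ref{defi-nice}, we have $h\geq z$ and $h\leq \sqrt{k}/2$, while condition (iv) gives $a=\vert A\sm B\vert\geq \sqrt{k}$; together these yield $h\leq a/2$, hence $a-h\geq a/2$. Consequently $h(a-h)/a\geq z/2$, so $\sqrt{h(a-h)/a}\geq \sqrt{z}/\sqrt{2}$, and $\frac{2\pi}{e\sqrt{2}}\geq 1$ lets me conclude
\[\frac{h!}{h^h}\cdot\frac{(a-h)!}{(a-h)^{a-h}}\cdot\frac{a^a}{a!}\;\geq\;\sqrt{z}.\]
Plugging this back gives the claimed lower bound of $\tfrac{3}{4}\cdot\sqrt{z}\cdot a!/n^a$.

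None of the steps looks genuinely hard; the only place requiring attention is verifying $h\leq a/2$ so that the AM--GM/Stirling lower bound can be simplified in terms of $z$ alone. This uses the interplay between condition (ii)'s upper bound $h\leq \sqrt{k}/2$ and condition (iv)'s lower bound $a\geq \sqrt{k}$, which is precisely why those thresholds were built into Definition \ref{defi-nice}.
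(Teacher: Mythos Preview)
Your proposal is correct and follows essentially the same approach as the paper: both combine Claim \ref{claim-count-labelings} and Claim \ref{claim-prob-nice-seq}, apply Stirling's formula identically, and then use $h\geq z$ together with $h\leq a/2$ (the paper routes this through $h\leq s=\lfloor\sqrt{k}/2\rfloor\leq a/2$, while you use $h\leq \sqrt{k}/2\leq a/2$ directly from conditions (ii) and (iv)) to conclude $\tfrac{h!}{h^h}\cdot\tfrac{(a-h)!}{(a-h)^{a-h}}\cdot\tfrac{a^a}{a!}\geq \sqrt{z}$.
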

\begin{proof}
Recall that by Claim \ref{claim-count-labelings} there are at least $\frac{3}{4}\cdot h(A,B)!\cdot (a-h(A,B))!$ labelings of $A\sm B$ as $v_1,\dots,v_a$ such that $v_1,\dots,v_a$ is a tame sequence.  For each of these labelings we clearly have $B\cup \lbrace v_1,\dots,v_a\rbrace=A$ and the integer $h$ in Definition \ref{defi-tame-sequence} equals $h(B\cup \lbrace v_1,\dots,v_a\rbrace,B)=h(A,B)$.

Abbreviating $h(A,B)$ just by $h$, by Claim \ref{claim-prob-nice-seq} each of these $\frac{3}{4}\cdot h!\cdot (a-h)!$ tame sequences $v_1,\dots,v_a$ is chosen during the random procedure with probability at least
\[\frac{a^a}{h^h\cdot (a-h)^{a-h}}\cdot \frac{1}{n^a}.\]
Therefore the probability of having $B\cup \lbrace v_1,\dots,v_a\rbrace=A$ is at least
\[\frac{3}{4}\cdot h!\cdot (a-h)!\cdot \frac{a^a}{h^h\cdot (a-h)^{a-h}}\cdot \frac{1}{n^a}=\frac{3}{4}\cdot \frac{h!}{h^h}\cdot \frac{(a-h)!}{(a-h)^{a-h}}\cdot \frac{a^a}{a!}\cdot \frac{a!}{n^a}.\]
By Stirling's formula (see for example \cite{robbins}) we have
\begin{multline*}
\frac{h!}{h^h}\cdot \frac{(a-h)!}{(a-h)^{a-h}}\cdot \frac{a^a}{a!}\geq 
\left(\sqrt{2\pi}\cdot \sqrt{h}\cdot e^{-h}\right)\cdot \left(\sqrt{2\pi}\cdot \sqrt{a-h}\cdot e^{-a+h}\right)\cdot \left(e\cdot \sqrt{a}\cdot e^{-a}\right)^{-1}\\
=\frac{2\pi}{e}\cdot \sqrt{h\cdot\frac{a-h}{a}}\geq \frac{2\pi}{e}\cdot \sqrt{z\cdot\frac{1}{2}}=\frac{2\pi}{e\cdot \sqrt{2}}\cdot \sqrt{z}\geq \frac{6}{3\cdot 2}\cdot \sqrt{z}=\sqrt{z},
\end{multline*}
where in the second inequality we used that $z\leq h\leq s\leq a/2$. Consequently, the probability of having $B\cup \lbrace v_1,\dots,v_a\rbrace=A$ is at least
\[\frac{3}{4}\cdot\frac{h!}{h^h}\cdot \frac{(a-h)!}{(a-h)^{a-h}}\cdot \frac{a^a}{a!}\cdot \frac{a!}{n^a}\geq\frac{3}{4}\cdot \sqrt{z}\cdot \frac{a!}{n^a},\]
as desired.
\end{proof}
Recall that we fixed the set $B\su V(G)$ and we chose a tame sequence $v_1,\dots,v_a$ according to the random procedure described above. As the events $B\cup \lbrace v_1,\dots,v_a\rbrace=A$ are clearly disjoint for different sets $A$, by Corollary \ref{coro-A-nice} the number of sets $A\su V(G)$ such that $(A,B)$ is a $z$-nice pair can be at most
\[\left(\frac{3}{4}\cdot \sqrt{z}\cdot \frac{a!}{n^a}\right)^{-1}=\frac{4}{3}\cdot \frac{1}{\sqrt{z}}\cdot \frac{n^a}{a!}.\]
This finishes the proof of Lemma \ref{lemma-B-nice-bound}.

\section{Proof of Theorem \ref{thm-graph-1-e}}\label{sect-proof-graph-1-e}

Now, we will finally prove Theorem \ref{thm-graph-1-e}. This section contains the main part of the proof of the theorem, but the proofs of several lemmas will be postponed to the following three sections.

We may assume without loss of generality that the constant $C$ in the statement of Theorem \ref{thm-graph-1-e} satisfies $C\geq 3$ (otherwise we can just make $C$ larger). Furthermore, we may assume that $k$ is sufficiently large and in particular that $k\geq 10^3 C$ and $k\geq 4\cdot \log^{10} k$.

If $1\leq \l\leq \sqrt{k}$, then Theorem \ref{thm-hyper-1-e} applied to $r=2$ implies
\[\ind(k,\l)=\ind_2(k,\l)\leq \ind_{\leq 2}(k,\l)\leq \frac{k}{k-2\l}\cdot \frac{1}{e}\leq \frac{k}{k-2\sqrt{k}}\cdot \frac{1}{e}=\frac{1}{e}+o_k(1).\]
This proves Theorem \ref{thm-graph-1-e} in the case $1\leq \l\leq \sqrt{k}$. So from now on let us assume that $\sqrt{k}\leq \l\leq C\cdot k$.

Let $n$ be large with respect to $k$ and consider a graph $G$ on $n$ vertices. We will show that for sufficiently large $n$, the graph $G$ has at most
\[\left(\frac{1}{e}+o_k(1)\right)\cdot \frac{n^k}{k!}\]
different $k$-vertex subsets $A\su V(G)$ with $e(A)=\l$. Proving this for all sufficiently large $n$ and all $n$-vertex graphs $G$ gives
\[\ind(k,\l)\leq \frac{1}{e}+o_k(1),\]
and finishes the proof of Theorem \ref{thm-graph-1-e}.

We can assume that the graph $G$ has at least $e^{-1}\cdot n^k/k!$ different $k$-vertex subsets $A\su V(G)$ with $e(A)=\l$, because otherwise we are already done.

Our proof is based on partitioning the vertex set $V(G)$ into vertices of high, medium and low degree. These notions are defined in the following definition.

\begin{definition}Let us say that a vertex $v\in V(G)$ has \emph{low degree} if $\deg_G(v)\leq 10C\cdot n/k$. Furthermore, let us say that $v$ has \emph{high degree} if $\deg_G(v)\geq 10C\cdot n/(\log^2 k)$. Finally, if $10C\cdot n/k<\deg_G(v)<10C\cdot n/(\log^2 k)$, let us say that $v$ has \emph{medium degree}.
\end{definition}

Let $\Vh$ be the set of high degree vertices, $\Vm$ be the set of medium degree vertices, and $\Vl$ the set of low degree vertices. Then $\Vh$, $\Vm$ and $\Vl$ form a partition of $V(G)$ (since $k\geq 4\cdot \log^{10} k>\log^2 k$).

\begin{lemma}\label{lemma-Vl-large-enough}$\vert \Vl\vert\geq n/12$.
\end{lemma}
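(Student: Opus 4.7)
The plan is to argue by contradiction: I will suppose $|\Vl| < n/12$ and derive a contradiction with the standing assumption $N := \#\{A \su V(G) : |A|=k,\ e(A)=\ell\} \geq e^{-1}\,n^k/k!$ (which in particular gives $N \geq e^{-1}\binom{n}{k}$). The main tool will be a double count of pairs $(v, A)$ with $v \in A$, $e(A) = \ell$, and $\deg_A(v) \leq 4\ell/k$.

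For the lower bound I use Markov-type averaging within each $A$: since $\sum_{v \in A} \deg_A(v) = 2\ell$, if strictly more than $k/2$ vertices in $A$ had $\deg_A(v) > 4\ell/k$, the sum would exceed $2\ell$. Hence at least $k/2$ vertices $v \in A$ satisfy $\deg_A(v) \leq 4\ell/k$, producing at least $(k/2) N$ such pairs overall.

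For the upper bound I bound the count vertex by vertex. For $v \in \Vl$ the trivial bound $\binom{n-1}{k-1}$ suffices. For $v \in V(G) \sm \Vl$ (so $\deg_G(v) > 10Cn/k$), conditioned on $v \in A$ the quantity $\deg_A(v)$ is hypergeometric with mean $\mu = \deg_G(v)(k-1)/(n-1)$; since $n$ and $k$ are large, $\mu > 9C$. Because $\ell \leq Ck$, we have $4\ell/k \leq 4C < \mu/2$, so the standard Chernoff bound for the hypergeometric distribution (the same bounds as for binomials; cf.\ \cite[Theorem A.1.13]{alon-spencer}) with $\delta = 1/2$ yields
\[
\Pr[\deg_A(v) \leq 4\ell/k \mid v \in A] \;\leq\; \Pr[\deg_A(v) \leq \mu/2 \mid v \in A] \;\leq\; e^{-\mu/8} \;\leq\; e^{-9C/8}.
\]
Hence the count at each such $v$ is at most $\binom{n-1}{k-1}\, e^{-9C/8}$.

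Combining both estimates using $|\Vl| < n/12$ gives $(k/2) N \leq n \binom{n-1}{k-1}\,(1/12 + e^{-9C/8})$. Using $\binom{n}{k} = (n/k)\binom{n-1}{k-1}$, this simplifies to $N \leq 2 \binom{n}{k}(1/12 + e^{-9C/8})$, whence $1/(2e) \leq 1/12 + e^{-9C/8}$. The key obstacle is to check that this is genuinely contradictory: for $C \geq 3$ (which we may assume, as already noted), the right side is bounded by $1/12 + e^{-27/8} \approx 0.118$, whereas $1/(2e) \approx 0.184$ --- a contradiction. The choice of the constant $10C$ in the definition of $\Vl$ is precisely what is needed to make $\mu/2$ comfortably exceed $4\ell/k$ and to deliver this numerical gap for all $C \geq 3$.
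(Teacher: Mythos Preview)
Your proof is correct and follows essentially the same approach as the paper's: both use the averaging observation that at least $k/2$ vertices of any $A$ with $e(A)=\ell\leq Ck$ have $\deg_A(v)\leq 4C$, combined with a Chernoff-type bound showing that vertices outside $\Vl$ rarely have such small degree in a random $k$-set. The only cosmetic differences are that the paper works with i.i.d.\ random \emph{sequences} (hence a binomial distribution, via Lemma~\ref{lemma-Chernoff-sequence-G}) and phrases the conclusion as $\mathbb{P}[v_1\in\Vl]\geq 1/(4e)$, whereas you work directly with sets (hence a hypergeometric distribution) and frame the same estimate as a double count; the numerics ($1/(2e)$ versus $1/12+e^{-9C/8}$) are essentially the same comparison.
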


We postpone the proof of Lemma \ref{lemma-Vl-large-enough} to Section \ref{sect-proofs-two-lemmas}.

Let $\gamma=e^{-120C}/48$. Note that $0<\gamma<1/64<1/2$ and $\gamma$ only depends on the constant $C$.

In order to prove the desired bound on the number of $k$-vertex subsets $A\su V(G)$ with $e(A)=\l$, we will distinguish between those subsets $A$ with $A\cap \Vh=\emptyset$ and those with $A\cap \Vh\neq \emptyset$. Let us first investigate the $k$-vertex subsets $A\su V(G)$ with $e(A)=\l$ and $A\cap \Vh=\emptyset$.

\begin{lemma}\label{lemma-suff-isolated}If we choose a $k$-vertex subset $A\su \Vl\cup \Vm$ uniformly at random, then we have $m(A)>(1-\gamma)k$ with probability at most $o_k(1)$.
\end{lemma}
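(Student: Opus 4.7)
The plan is to apply Chebyshev's inequality to the number of isolated low-degree vertices of $A$. Set
\[ X_L := |\{v \in A \cap \Vl : v \text{ isolated in } A\}|, \]
which satisfies $X_L \leq k - m(A)$, so it suffices to prove $\mathbb{P}[X_L < \gamma k] = o_k(1)$. Let $N = |\Vl \cup \Vm|$ and $\beta = N/n$; Lemma \ref{lemma-Vl-large-enough} gives $N \geq n/12$, so $\beta \in [1/12, 1]$. For $v \in \Vl$ set $d_v := |N(v) \cap (\Vl \cup \Vm)| \leq 10Cn/k$.

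For the first moment, the identity
\[ \mathbb{P}[v \in A,\ v \text{ isolated in } A] = \frac{k}{N} \cdot \frac{\binom{N-1-d_v}{k-1}}{\binom{N-1}{k-1}}, \]
combined with the elementary inequality $\log(1-x) \geq -x/(1-x)$ and the bound $(k-1)d_v/N \leq 10Cn/N = 10C/\beta$, yields a lower bound of $(k/N) e^{-10C/\beta}(1 - o_k(1))$. Summing over $v \in \Vl$ and using $|\Vl|/N \geq 1/(12\beta)$ gives $\mathbb{E}[X_L] \geq (k/(12\beta)) e^{-10C/\beta}(1 - o_k(1))$. The function $(12\beta)^{-1} e^{-10C/\beta}$ is increasing on $[1/12, 1]$ (its $\beta$-log-derivative is $(10C-\beta)/\beta^2 > 0$ since $C \geq 3$), so the minimum occurs at $\beta = 1/12$, giving $\mathbb{E}[X_L] \geq e^{-120C}k(1-o_k(1)) = 48\gamma k(1-o_k(1)) \geq 40\gamma k$ for $k$ sufficiently large.

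For the variance, write $\mathbf{1}_v$ for the indicator $[v \in A,\ v \text{ isolated in } A]$ and $p = k/N$. Observe that $\mathrm{Cov}(\mathbf{1}_v, \mathbf{1}_w) \leq 0$ whenever $vw \in E(G)$. For $vw \notin E(G)$, one computes $\mathbb{P}[\mathbf{1}_v = \mathbf{1}_w = 1]/(\mathbb{P}[\mathbf{1}_v=1]\mathbb{P}[\mathbf{1}_w=1]) = (1-p)^{-d_{vw}}(1 + o_n(1))$, where $d_{vw} := |N(v) \cap N(w) \cap (\Vl \cup \Vm)|$; combined with the uniform bound $pd_{vw} \leq pd_v \leq 10C/\beta \leq 120C$ and $e^x - 1 \leq xe^x$, this gives $\mathrm{Cov}(\mathbf{1}_v, \mathbf{1}_w) \leq O_C(p^3 d_{vw})$. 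The combinatorial heart of the estimate is
\[ \sum_{v, w \in \Vl} d_{vw} = \sum_{z \in \Vl \cup \Vm} |N(z) \cap \Vl|^2, \]
which I bound by splitting. For $z \in \Vl$ the contribution is at most $|\Vl|(10Cn/k)^2 \leq 100C^2 n^3/k^2$. For $z \in \Vm$ the contribution is at most $\max_{z \in \Vm}|N(z) \cap \Vl| \cdot \sum_{z \in \Vm}|N(z) \cap \Vl| \leq (10Cn/\log^2 k) \cdot (10Cn^2/k) = 100C^2 n^3/(k\log^2 k)$, since the inner sum counts edges between $\Vl$ and $\Vm$ and is bounded by $\sum_{v \in \Vl} \deg_G(v)$. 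Because $k \geq 4\log^{10}k$ forces $1/k^2 \leq 1/(k\log^2 k)$, the $\Vm$-term dominates; combining with $p^3 \leq (12k/n)^3$ yields $\mathrm{Var}(X_L) = O_C(k^2/\log^2 k)$.

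Chebyshev's inequality then gives
\[ \mathbb{P}[X_L < \gamma k] \leq \mathbb{P}\bigl[X_L < \tfrac{1}{2}\mathbb{E}[X_L]\bigr] \leq \frac{4\,\mathrm{Var}(X_L)}{\mathbb{E}[X_L]^2} = O_C\!\left(\frac{1}{\log^2 k}\right) = o_k(1), \]
completing the argument. The main obstacle is the variance estimate: a direct exchange/Azuma martingale argument would fail because exchanging a vertex of $\Vm$ into or out of $A$ can change $X_L$ by as much as $10Cn/\log^2 k$, giving too large a Lipschitz constant. The second moment method succeeds precisely because $\mathrm{Cov}(\mathbf{1}_v, \mathbf{1}_w)$ is governed by the purely local quantity $d_{vw}$, and the medium-degree cap $10Cn/\log^2 k$ provides the $\log^2 k$ saving that drives the relative variance to $o_k(1)$.
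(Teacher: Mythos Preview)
Your proof is correct and follows the same second-moment strategy as the paper: both count isolated low-degree vertices in $A$, lower-bound the mean by $\Theta_C(k)$, and control the variance via the common-neighbour counts $d_{vw}$, with the crucial $\log^2 k$ saving coming from the degree cap on $\Vm$. The only organizational difference is that the paper splits pairs $(v,w)$ into ``special'' ($d_{vw}$ large) and ``non-special'' and handles the two cases separately, whereas you bound every covariance uniformly by $O_C(p^3 d_{vw})$ via $e^x-1\le xe^x$ and then evaluate $\sum_{v,w}d_{vw}=\sum_{z}|N(z)\cap\Vl|^2$ directly; this is a mild streamlining of the same computation, and the underlying combinatorial input (bounding paths of length two through $\Vl\cup\Vm$) is identical.
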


We will prove Lemma \ref{lemma-suff-isolated} in Section \ref{sect-suff-isolated}. The proof will be based on a second moment computation. Using this lemma as well as Corollary \ref{coro-propo-o1}, we can now bound the number of $k$-vertex subsets $A\su V(G)$ with $e(A)=\l$ and $A\cap \Vh=\emptyset$.

\begin{corollary}\label{coro-no-Vh}The number of $k$-vertex subsets $A\su V(G)$ with $e(A)=\l$ and $A\cap \Vh=\emptyset$ is at most
\[o_k(1)\cdot \frac{n^k}{k!}.\]
\end{corollary}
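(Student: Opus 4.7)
The plan is to split the $k$-vertex subsets $A \su V(G)$ with $A \cap \Vh = \emptyset$ and $e(A) = \l$ into two groups according to whether $m(A) > (1-\gamma)k$ or $m(A) \leq (1-\gamma)k$, and bound each group by $o_k(1) \cdot n^k/k!$ using the two tools already available, namely Lemma \ref{lemma-suff-isolated} and Corollary \ref{coro-propo-o1}.

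First, I will treat the subsets $A$ with $m(A) > (1-\gamma)k$. Since $A \cap \Vh = \emptyset$ means $A \su \Vl \cup \Vm$, I can apply Lemma \ref{lemma-suff-isolated} directly: if we sample a $k$-vertex subset of $\Vl \cup \Vm$ uniformly at random, then with probability $o_k(1)$ we obtain $m(A) > (1-\gamma)k$. Hence the number of such $A$ is at most $o_k(1) \cdot \binom{|\Vl \cup \Vm|}{k} \leq o_k(1) \cdot \binom{n}{k} = o_k(1) \cdot n^k/k!$. (Note that the edge-count restriction $e(A) = \l$ only shrinks this group further.)

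Next, I will treat the subsets $A \su \Vl \cup \Vm$ with $e(A) = \l$ and $m(A) \leq (1-\gamma)k$. Here I use the assumption $\l \geq \sqrt{k}$: by Claim \ref{claim-mA-bounps}-graph}, any such $A$ satisfies $m(A) \geq \sqrt{2\l} \geq \sqrt{2} \cdot k^{1/4}$. I may therefore apply Corollary \ref{coro-propo-o1} to the graph $G$ with $r = 2$, $\eps = \gamma$ (recall that $0 < \gamma < 1/2$ depends only on $C$, so $k$ is sufficiently large with respect to $\eps$), and $c' = \sqrt{2\l}$, which bounds the number of such $A$ by
\[
\left(32 \sqrt{2} \cdot (2\l)^{-1/4} + 23 \sqrt{2} \cdot k^{-1/4} \log k\right) \cdot \frac{n^k}{k!}.
\]
Using $\l \geq \sqrt{k}$ we get $\l^{-1/4} \leq k^{-1/8}$, and $k^{-1/4} \log k = o_k(1)$, so the whole expression is $o_k(1) \cdot n^k/k!$.

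Adding the two estimates yields the desired bound. The only conceptual input beyond routine computation is the assumption $\l \geq \sqrt{k}$ guaranteeing $c' = \sqrt{2\l}$ is large enough that the $c'^{-1/2}$ term in Corollary \ref{coro-propo-o1} vanishes with $k$; the main obstacle was really hidden in Lemma \ref{lemma-suff-isolated}, whose second-moment proof is deferred to a later section. I do not expect any additional difficulty in writing up the corollary itself.
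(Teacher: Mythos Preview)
Your proposal is correct and follows essentially the same approach as the paper: split according to whether $m(A) > (1-\gamma)k$ or $m(A) \le (1-\gamma)k$, handle the first case with Lemma~\ref{lemma-suff-isolated} and the second with Corollary~\ref{coro-propo-o1} (using Claim~\ref{claim-mA-bounds-graph} and the assumption $\l \ge \sqrt{k}$ to make the $c'^{-1/2}$ term $o_k(1)$). Aside from a typo in your reference to Claim~\ref{claim-mA-bounds-graph}, there is nothing to add.
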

\begin{proof}Lemma \ref{lemma-suff-isolated} implies that the number of $k$-vertex subsets $A\su V(G)$ with $A\cap \Vh=\emptyset$ and $m(A)>(1-\gamma)k$ is at most
\[o_k(1)\cdot {\vert \Vl\cup \Vm\vert\choose k}\leq o_k(1)\cdot {n\choose k}\leq o_k(1)\cdot \frac{n^k}{k!}.\]
In particular, the number of $k$-vertex subsets $A\su V(G)$ with $e(A)=\l$, $A\cap \Vh=\emptyset$ and $m(A)>(1-\gamma)k$ is at most $o_k(1)\cdot n^k/k!$.

On the other hand, recall that by Claim \ref{claim-mA-bounds-graph} each $k$-vertex subset $A\su V(G)$ with $e(A)=\l$ satisfies $m(A)\geq \sqrt{2\l}$. So by Corollary \ref{coro-propo-o1} applied to $r=2$, $\eps=\gamma$ and $c'=\sqrt{2\l}$, the number of $k$-vertex subsets $A\su V(G)$ with $e(A)=\l$ and $m(A)\leq (1-\gamma)k$ is at most
\[\left(32\cdot \sqrt{2}\cdot \frac{1}{(2\l)^{1/4}}+23 \cdot \sqrt{2}\cdot k^{-1/4}\cdot \log k\right)\cdot \frac{n^k}{k!}\leq  \left(48\cdot k^{-1/8}+36\cdot k^{-1/4}\cdot \log k\right)\cdot \frac{n^k}{k!}=o_k(1)\cdot \frac{n^k}{k!}.\]
Here we used our assumption that $\l\geq \sqrt{k}$. Hence, the number of $k$-vertex subsets $A\su V(G)$ with $e(A)=\l$, $A\cap \Vh=\emptyset$ and $m(A)\leq (1-\gamma)k$ is also at most $o_k(1)\cdot n^k/k!$.

Combining the bounds for these two cases, we see that total number of $k$-vertex subsets $A\su V(G)$ with $e(A)=\l$ and $A\cap \Vh=\emptyset$ is at most $o_k(1)\cdot n^k/k!$, as desired.
\end{proof}

At this point, it remains to consider the $k$-vertex subsets $A\su V(G)$ with $e(A)=\l$ and $A\cap \Vh\neq \emptyset$. The following lemma further restricts the subsets to consider.

\begin{lemma}\label{lemma-vertex-degree-off}There are at most $o_k(1)\cdot n^k/k!$ different $k$-vertex subsets $A\su V(G)$ with the property that there exists a vertex $v\in A$ with
\[\left\vert\deg_A(v)-\frac{k-1}{n}\cdot \deg_G(v)\right\vert>\sqrt{k\cdot \log k}.\]
\end{lemma}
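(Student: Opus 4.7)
\smallskip
\noindent\textit{Proof proposal for Lemma \ref{lemma-vertex-degree-off}.}
The natural approach is a union bound over the potential ``bad'' vertex $v$, combined with a concentration estimate. First, I would fix a vertex $v\in V(G)$ and observe that any $k$-vertex set $A\ni v$ is determined by $S:=A\sm\lbrace v\rbrace$, a $(k-1)$-element subset of $V(G)\sm\lbrace v\rbrace$, with $\deg_A(v)=\vert N(v)\cap S\vert$. If $S$ is chosen uniformly at random from $\binom{V(G)\sm\lbrace v\rbrace}{k-1}$, then $X:=\vert N(v)\cap S\vert$ is hypergeometric with mean $\E[X]=\frac{k-1}{n-1}\deg_G(v)$. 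Since $\bigl\vert\frac{k-1}{n-1}-\frac{k-1}{n}\bigr\vert\deg_G(v)\leq \frac{k-1}{n-1}=o(1)$ for $n$ large compared to $k$, the event $\bigl\vert\deg_A(v)-\frac{k-1}{n}\deg_G(v)\bigr\vert>\sqrt{k\log k}$ implies $\vert X-\E[X]\vert\geq(1-o(1))\sqrt{k\log k}$.

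Next I would apply Hoeffding's inequality for sampling without replacement: for a hypergeometric variable obtained from $k-1$ draws, $\mathbb{P}\bigl(\vert X-\E[X]\vert\geq t\bigr)\leq 2\exp\bigl(-2t^2/(k-1)\bigr)$. Plugging in $t=(1-o(1))\sqrt{k\log k}$ and using that all logarithms are to base $2$, this yields
\[
\mathbb{P}\bigl(v\text{ is ``bad'' for }A\bigr)\leq 2\exp\bigl(-(2-o(1))\log k\bigr)\leq 2\cdot k^{-2/\ln 2+o(1)}\leq 3\cdot k^{-2},
\]
for $k$ sufficiently large. Consequently, for each fixed $v$, the number of $k$-vertex subsets $A$ containing $v$ for which the bad inequality is satisfied is at most $3k^{-2}\binom{n-1}{k-1}$.

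Finally, I would take a union bound over $v\in V(G)$. The number of pairs $(v,A)$ with $v\in A$ and $v$ bad for $A$ is at most
\[
n\cdot 3k^{-2}\binom{n-1}{k-1}=3k^{-2}\cdot k\binom{n}{k}=\frac{3}{k}\binom{n}{k}=o_k(1)\cdot\frac{n^k}{k!}.
\]
Since every $k$-vertex subset $A$ satisfying the hypothesis of the lemma contributes at least one such pair (namely a $(v,A)$ with $v$ a bad vertex in $A$), the number of such subsets is bounded by the number of pairs, giving the desired estimate.

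The whole argument is essentially routine once Hoeffding's inequality is in hand; the only point that requires attention is that the deviation threshold $\sqrt{k\log k}$ is chosen precisely so that Hoeffding produces a tail bound of the form $k^{-c}$ with $c>1$, which is needed to compensate for the extra factor $k$ arising from the union bound over $n$ vertices (via $n\binom{n-1}{k-1}=k\binom{n}{k}$). A window of any size $\omega(\sqrt{k})$ growing faster than $\sqrt{k}$ would have sufficed; the specific choice $\sqrt{k\log k}$ is convenient for use in the later steps of the proof of Theorem~\ref{thm-graph-1-e}.
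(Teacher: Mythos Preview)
Your proof is correct and follows essentially the same approach as the paper: a concentration bound for $\deg_A(v)$ yielding a tail probability of order $k^{-2}$, followed by a union bound over the choice of the bad vertex $v$, which costs a factor of $k$ via $n\binom{n-1}{k-1}=k\binom{n}{k}$. The only cosmetic difference is that the paper passes to sequences $v_1,\dots,v_k$ sampled with replacement and applies a Chernoff bound for the binomial (Lemma~\ref{lemma-Chernoff-sequence-G}), whereas you work directly with the hypergeometric distribution and invoke Hoeffding's inequality; both routes give the same $O(k^{-2})$ tail and hence the same $O(1/k)\cdot n^k/k!$ final count.
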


We will prove Lemma \ref{lemma-vertex-degree-off} in Section \ref{sect-proofs-two-lemmas}. The proof is a relatively standard Chernoff bound computation.

The following definition captures the properties of the $k$-vertex subsets $A\su V(G)$ that still need to be considered to finish the proof of Theorem \ref{thm-graph-1-e}.

\begin{definition}Let us call a $k$-vertex subset $A\su V(G)$ \emph{interesting} if $e(A)=\l$ and $A\cap \Vh\neq \emptyset$ and if every vertex $v\in A$ satisfies
\[\left\vert\deg_A(v)-\frac{k-1}{n}\cdot \deg_G(v)\right\vert\leq \sqrt{k\cdot \log k}.\]
\end{definition}

The next lemma implies an upper bound on the number of interesting $k$-vertex subsets $A\su V(G)$.

\begin{lemma}\label{lemma-A-interesting}If we choose a $k$-vertex subset $A\su V(G)$ uniformly at random, then $A$ is interesting with probability at most
\[\frac{1}{e}+o_k(1).\]
\end{lemma}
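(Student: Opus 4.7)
The proof rests on the structural information that interesting sets provide around a high-degree vertex. For each interesting $A$, I would designate a distinguished vertex $v^{*}=v^{*}(A)\in A\cap \Vh$ by a deterministic rule (say, the vertex with the largest degree in $G$, with ties broken by vertex label). Interesting sets are then partitioned according to the pair $(v^{*},s)$, where $s=\deg_A(v^{*})$. By the degree condition at $v^{*}$ built into the definition of "interesting", the integer $s$ lies in the window $[\mu^{*}-\sqrt{k\log k},\,\mu^{*}+\sqrt{k\log k}]$ with $\mu^{*}=(k-1)\deg_G(v^{*})/n$, and since $v^{*}\in\Vh$ we have $\mu^{*}\geq 10C(k-1)/\log^{2} k\to\infty$. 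So one is forced into a narrow range of values of $s$, all of them large in absolute terms.

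\textbf{Reduction to a refined count.} Conditioning on $v^{*}\in A$, the set $A'=A\sm\lbrace v^{*}\rbrace$ is a uniform $(k-1)$-subset of $V(G)\sm\lbrace v^{*}\rbrace$. The events $e(A)=\l$ and $\deg_A(v^{*})=s$ together force $|A'\cap N(v^{*})|=s$ and $e(A')=\l-s$. So for fixed $v^{*}$ and $s$ the task becomes bounding the number of $(k-1)$-subsets $A'\su V(G)\sm\lbrace v^{*}\rbrace$ with $|A'\cap N(v^{*})|=s$ and $e(A')=\l-s$. To handle these two constraints simultaneously, I would introduce the auxiliary hypergraph $H$ on $V(G)\sm\lbrace v^{*}\rbrace$ whose hyperedges are the edges of $G$ not incident to $v^{*}$ together with the singletons $\lbrace u\rbrace$ for each $u\in N(v^{*})$; then $e_H(A')=\l$ for every valid $A'$, and one seeks $(k-1)$-subsets of $V(H)$ with exactly $\l$ hyperedges and exactly $s$ of them singletons.

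\textbf{Extracting the $1/e$.} I would adapt the weighted good-sequence argument from the proof of Theorem \ref{thm-hyper-1-e} to this refined counting. The pinned split $(s,\l-s)$ between singleton and size-$2$ hyperedges is the crucial new ingredient: it prevents the blowup of the factor $k/(k-r\l)$ from Theorem \ref{thm-hyper-1-e} that otherwise obstructs $\l$ linear in $k$, because the $\l-s$ size-$2$ edges (with $s\approx\mu^{*}\geq\Omega(k/\log^{2} k)$) together with the $s$ singletons admit a markedly tighter weighted count. The constant $1/e$ should then emerge from an optimization analogous to Lemma \ref{lemma2}, carried out on a two-parameter expression that reflects the two hyperedge types. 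One finally sums over the $O(\sqrt{k\log k})$ admissible values of $s$ and over $v^{*}\in\Vh$, using a local-limit estimate for the hypergeometric distribution of $|A'\cap N(v^{*})|$ (and Lemma \ref{lemma-vertex-degree-off} to dispose of degenerate configurations) to control the summation and conclude that the total probability that $A$ is interesting is at most $1/e+o_k(1)$.

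\textbf{Main obstacle.} The central difficulty is not any one of these steps in isolation but their combination, which must be executed without losing the sharp constant $1/e$. The presence of singleton hyperedges structurally confined to $N(v^{*})$ interacts delicately with the pinned split $|A'\cap N(v^{*})|=s$, and the scalar optimization underlying Lemma \ref{lemma2} must be replaced by one tracking two separate weights. A closely related difficulty is the sum over $s$: when $\mu^{*}$ is as small as $k/\log^{2} k$, the admissible window of $s$-values contains more points than a naive local-limit bound can comfortably absorb, so the $o_k(1)$ error has to be tracked carefully, perhaps via a second-moment estimate on $\deg_A(v^{*})$ or a sharper anticoncentration statement. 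Keeping the bookkeeping of all the degree conditions in the definition of "interesting" under control throughout is what makes Section \ref{sect-A-interesting} the most technical part of the paper.
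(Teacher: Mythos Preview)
Your proposal takes a fundamentally different route from the paper, and I do not believe the route closes. The paper never tries to squeeze $1/e$ out of a good-sequence argument here. Instead it exploits the partition $V(G)=\Vh\cup(\Vm\cup\Vl)$ globally: it couples the choice of $A$ to an independent random variable $a\in\{0,\dots,k\}$ with $\mathbb{P}[a=j]=p_j=\mathbb{P}[|A\cap\Vh|=j]$, shows the elementary hypergeometric fact that $p_j\le e^{-1}+o_k(1)$ for every $1\le j\le k-1$, and then proves that the ``interesting'' constraints force the events $\mathscr{E}_t$ (defined for $t=1,\dots,\lfloor\log^2 k\rfloor$ and measurable with respect to data independent of $a$) to be nearly pairwise disjoint, so that $\sum_t\mathbb{P}[\mathscr{E}_t]\le 1+o_k(1)$. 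The constant $1/e$ thus comes from a single maximal point mass of a hypergeometric, and the edge constraint $e(A)=\ell$ is used only to force near-disjointness, not to produce the constant.

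Your plan, by contrast, locates the $1/e$ inside a refinement of the Theorem~\ref{thm-hyper-1-e} machinery. The obstruction you flag is exactly the one that breaks the argument: in that machinery the decisive inequality is $k-1-|B\cup \U(\mathcal{X})|\ge k-r\l$, and for $\l=\Theta(k)$ this lower bound is useless. Your ``pinned split'' does not repair it. In the auxiliary hypergraph, the size-$2$ edges alone already number $\l-s$, and when $\mu^{*}$ sits near its allowed minimum $\Theta(k/\log^2 k)$ one has $\l-s=\l-\Theta(k/\log^2 k)=\Theta(k)$, so the non-isolated set $B$ still has size linear in $k$ and the factor $(k-1-|B\cup\U(\mathcal{X})|)^{-1}$ still blows up. No two-parameter variant of Lemma~\ref{lemma2} helps, because the bound on $|B\cup\U(\mathcal{X})|$ is what feeds the exponent, and that bound is the problem. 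On top of this, the sum over $s$ ranges over $\Theta(\sqrt{k\log k})$ values; a local-limit bound on $\mathbb{P}[|A'\cap N(v^{*})|=s]$ is only $O(1/\sqrt{\mu^{*}})$, which at $\mu^{*}=\Theta(k/\log^2 k)$ gives $O(\log k/\sqrt{k})$ per term and hence $O(\log^{3/2}k)$ in total --- far from $1/e+o_k(1)$. You would need the two constraints $|A'\cap N(v^{*})|=s$ and $e(A')=\l-s$ to interact multiplicatively with a sharp constant, and nothing in the proposal indicates how to achieve that.

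In short, the missing idea is that the $1/e$ in this lemma does not come from the edge-count constraint at all; it comes from anticoncentration of $|A\cap\Vh|$, and the edge-count constraint is used only to show that interesting sets essentially pin $|A\cap\Vh|$ to a single value.
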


We will prove Lemma \ref{lemma-A-interesting} in Section \ref{sect-A-interesting}. Now, let us finish the proof of Theorem \ref{thm-graph-1-e}.

\begin{corollary}\label{coro-Vh}The number of $k$-vertex subsets $A\su V(G)$ with $e(A)=\l$ and $A\cap \Vh\neq \emptyset$ is at most
\[\left(\frac{1}{e}+o_k(1)\right)\cdot \frac{n^k}{k!}.\]
\end{corollary}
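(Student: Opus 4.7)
The plan is to combine Lemma \ref{lemma-A-interesting} with Lemma \ref{lemma-vertex-degree-off} via a simple two-case split on the $k$-vertex subsets $A \subseteq V(G)$ satisfying $e(A) = \ell$ and $A \cap \Vh \neq \emptyset$.

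First, I would partition these subsets according to whether or not they are interesting. A subset $A$ is interesting precisely when $e(A)=\ell$, $A \cap \Vh \neq \emptyset$, and every $v \in A$ satisfies $|\deg_A(v) - ((k-1)/n)\deg_G(v)| \leq \sqrt{k\log k}$. Thus any $A$ with $e(A)=\ell$ and $A \cap \Vh \neq \emptyset$ is either interesting, or fails the degree condition of Lemma \ref{lemma-vertex-degree-off} (meaning there exists some $v\in A$ with $|\deg_A(v) - ((k-1)/n)\deg_G(v)| > \sqrt{k\log k}$).

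For the first group, Lemma \ref{lemma-A-interesting} says that a uniformly random $k$-vertex subset of $V(G)$ is interesting with probability at most $1/e + o_k(1)$. Multiplying through by $\binom{n}{k}$ and using $\binom{n}{k} \leq n^k/k!$, we see that the number of interesting $k$-vertex subsets is at most $(1/e + o_k(1)) \cdot n^k/k!$. For the second group, Lemma \ref{lemma-vertex-degree-off} directly gives a bound of $o_k(1) \cdot n^k/k!$.

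Summing the two bounds yields the desired total bound of $(1/e + o_k(1)) \cdot n^k/k!$. There is no real obstacle here: the corollary is just a bookkeeping consequence of the two previously stated lemmas. All the work has been pushed into Lemmas \ref{lemma-A-interesting} and \ref{lemma-vertex-degree-off} (and, indirectly, Corollary \ref{coro-no-Vh} for the complementary case $A \cap \Vh = \emptyset$, which is not needed here).
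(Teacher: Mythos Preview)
Your proposal is correct and matches the paper's own proof essentially line for line: the paper also splits into interesting subsets (bounded via Lemma \ref{lemma-A-interesting} and $\binom{n}{k}\leq n^k/k!$) and non-interesting ones (bounded via Lemma \ref{lemma-vertex-degree-off}), then adds the two bounds.
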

\begin{proof}Each $k$-vertex subset $A\su V(G)$ with $e(A)=\l$ and $A\cap \Vh\neq \emptyset$ is either interesting or it contains a vertex $v\in A$ with $\left\vert\deg_A(v)-\deg_G(v)\cdot (k-1)/n\right\vert>\sqrt{k\cdot \log k}$. By Lemma \ref{lemma-vertex-degree-off}, the number of  $k$-vertex subsets $A\su V(G)$ with this second property is at most $o_k(1)\cdot n^k/k!$.

On the other hand, by Lemma \ref{lemma-A-interesting}, the number of interesting $k$-vertex subsets $A\su V(G)$ is at most
\[\left(\frac{1}{e}+o_k(1)\right)\cdot {n\choose k}\leq \left(\frac{1}{e}+o_k(1)\right)\cdot \frac{n^k}{k!}.\]
Thus, the total number of $k$-vertex subsets $A\su V(G)$ with $e(A)=\l$ and $A\cap \Vh\neq \emptyset$ is also at most $(e^{-1}+o_k(1))\cdot n^k/k!$, as desired.
\end{proof}

Now, combining Corollaries \ref{coro-no-Vh} and \ref{coro-Vh} yields that the total number of $k$-vertex subsets $A\su V(G)$ with $e(A)=\l$ is at most
\[\left(\frac{1}{e}+o_k(1)\right)\cdot \frac{n^k}{k!}.\]
This finishes the proof of Theorem \ref{thm-graph-1-e}.

\section{Proof of Lemmas \ref{lemma-Vl-large-enough} and \ref{lemma-vertex-degree-off}}\label{sect-proofs-two-lemmas}

In this section, we will prove Lemma \ref{lemma-Vl-large-enough} and \ref{lemma-vertex-degree-off}. As a preparation, we start with the following lemma, which is an easy consequence of the Chernoff bound for binomial random variables.

\begin{lemma}\label{lemma-Chernoff-sequence-G} Suppose we choose a random sequence $v_1,\dots,v_k$ of vertices in $V(G)$ by independently picking each $v_i\in V(G)$ uniformly at random. Let $Z$ be the random variable counting the number of indices $j\in \lbrace 2,\dots,k\rbrace$ such that $v_1$ and $v_j$ are adjacent (and in particular $v_1\neq v_j$). Then we have
\[\mathbb{P}\left[\left\vert Z-\deg_G(v_1)\cdot (k-1)/n\right\vert> \sqrt{k\cdot \log k}\right]<\frac{2}{k^2}.\]
Furthermore, for the conditional probability of $Z\leq \frac{1}{2}\cdot \deg_G(v_1)\cdot (k-1)/n$ conditioned on $v_1\not\in \Vl$, we have
\[\mathbb{P}\left[ Z\leq \frac{1}{2}\cdot \deg_G(v_1)\cdot \frac{k-1}{n}\,\bigg\vert\, v_1\not\in\Vl\right]<e^{-3}.\]
\end{lemma}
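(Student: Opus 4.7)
The plan is to condition on the first vertex $v_1$ and reduce both bounds to standard Chernoff estimates for a binomial distribution. Conditional on any fixed $v_1 = u$, the remaining vertices $v_2,\dots,v_k$ are i.i.d.\ uniform over $V(G)$, and each $v_j$ is adjacent to $u$ with probability exactly $\deg_G(u)/n$ (independently across $j$); this automatically forces $v_j \neq u$, since adjacency requires distinctness. Consequently, conditionally on $v_1 = u$, the count $Z$ is distributed as $\mathrm{Binomial}(k-1,\,\deg_G(u)/n)$ with conditional mean $\mu(u) := \deg_G(u)\cdot (k-1)/n$.

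For the first inequality, I would apply the two-sided additive Chernoff (Hoeffding) bound for sums of $k-1$ independent Bernoulli variables:
\[
\mathbb{P}\bigl[\,|Z - \mu(u)| > t \,\bigm|\, v_1 = u\,\bigr] \;\leq\; 2\exp\!\left(-\frac{2t^2}{k-1}\right).
\]
Setting $t = \sqrt{k\log k}$ and using $k/(k-1) > 1$, the exponent is at most $-2\log k$. Since all logarithms in the paper are to base $2$, we have $\exp(-2\log_2 k) = k^{-2/\ln 2}$, which is comfortably smaller than $k^{-2}$ (as $2/\ln 2 > 2$). The conditional probability is therefore strictly less than $2/k^2$, and since this bound is uniform in $u$, the unconditional probability is also at most $2/k^2$.

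For the second inequality, I would apply the multiplicative Chernoff lower-tail bound to the conditional binomial $Z$ with deviation parameter $\delta = 1/2$, giving
\[
\mathbb{P}\bigl[\,Z \leq \tfrac{1}{2}\mu(u) \,\bigm|\, v_1 = u\,\bigr] \;\leq\; \exp(-\mu(u)/8).
\]
Whenever $u \notin \Vl$, the definition of low degree gives $\deg_G(u) > 10C\cdot n/k$, hence $\mu(u) > 10C(k-1)/k$. Using the standing assumptions $C \geq 3$ and $k \geq 10^3 C$ from the proof of Theorem \ref{thm-graph-1-e}, this yields $\mu(u) > 30(1 - 10^{-3}) > 24$, so $\exp(-\mu(u)/8) < e^{-3}$. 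Averaging this uniform-in-$u$ bound over the conditional distribution of $v_1$ given $v_1 \notin \Vl$ produces the claimed inequality.

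No step is delicate: the proof is essentially routine once $Z$ is recognised as a conditional binomial. The only thing to verify carefully is that the constants are tight enough so that the multiplicative Chernoff bound delivers the threshold $e^{-3}$ strictly, i.e.\ that $\mu(u)/8 > 3$ whenever $v_1 \notin \Vl$; the hypothesis $C \geq 3$ together with $k \geq 10^3 C$ is precisely what makes this margin work out.
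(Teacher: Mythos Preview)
Your proof is correct and follows essentially the same approach as the paper: condition on $v_1$, recognize $Z$ as a $\mathrm{Binomial}(k-1,\deg_G(v_1)/n)$ variable, apply Hoeffding for the first bound and the multiplicative Chernoff lower-tail for the second. The only cosmetic difference is that the paper handles the base-2 logarithm by first noting $\sqrt{k\log k}>\sqrt{k\ln k}$ and then computing $2\exp(-2\ln k)=2/k^2$ directly, and for the second part bounds $\mu(u)/8\geq C\geq 3$ using only $k\geq 5$ rather than your slightly more generous numerical margin.
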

\begin{proof}Let us condition on the choice of $v_1$, so fix any $v_1\in V(G)$. Then for each $j\in \lbrace 2,\dots,k\rbrace$ the vertex $v_j$ will adjacent to $v_1$ with probability $\deg_G(v_1)/n$, and this is independent for all $j\in \lbrace 2,\dots,k\rbrace$. Hence, when fixing $v_1$, we see that $Z$ is a binomially distributed random variable with parameters $k-1$ and $\deg_G(v_1)/n$. In particular, the expectation of $Z$ when $v_1$ is fixed equals $\deg_G(v_1)\cdot (k-1)/n$. So by a Chernoff bound, for example in the form of \cite[Corollary A.1.7]{alon-spencer}, we obtain
\begin{multline*}
\mathbb{P}\left[\left\vert Z-\deg_G(v_1)\cdot (k-1)/n\right\vert> \sqrt{k\cdot \log k}\right]\leq \mathbb{P}\left[\left\vert Z-\deg_G(v_1)\cdot (k-1)/n\right\vert> \sqrt{k\cdot \ln k}\right]\\
<2\exp\left(-2\left(\sqrt{k\cdot \ln k}\right)^2\cdot \frac{1}{k-1}\right)=2\exp\left(-2\cdot \ln k\cdot \frac{k}{k-1}\right)<2\exp\left(-2\cdot \ln k\right)=\frac{2}{k^2},
\end{multline*}
where in the first step we used that $\log k>\ln k$. Since this holds for any choice of $v_1\in V(G)$, we also obtain the desired bound for the unconditional probability of $\left\vert Z-\deg_G(v_1)\cdot (k-1)/n\right\vert> \sqrt{k\cdot \log k}$. This proves the first inequality.

For the second inequality, let us again condition on the choice of $v_1$, but this time we assume $v_1\not\in \Vl$. As before, when fixing $v_1\not\in \Vl$, we see that $Z$ is a binomially distributed random variable with parameters $k-1$ and $\deg_G(v_1)/n$. By the Chernoff bound for lower tails of binomial random variables (see for example \cite[Corollary A.1.13]{alon-spencer}), we have
\begin{multline*}
\mathbb{P}\left[Z\leq \frac{1}{2}\cdot \deg_G(v_1)\cdot \frac{k-1}{n}\right] =
\mathbb{P}\left[Z\leq \frac{1}{2}\cdot (k-1) \cdot \frac{\deg_G(v_1)}{n}\right]\\
<\exp\left(-\frac{\left(\frac{1}{2}\cdot (k-1)\cdot \deg_G(v_1)/n\right)^2}{2\cdot (k-1)\cdot \deg_G(v_1)/n}\right)=\exp\left(-\frac{1}{8}\cdot (k-1)\cdot \frac{\deg_G(v_1)}{n}\right)\\
\leq \exp\left(-\frac{1}{8}\cdot (k-1)\cdot \frac{10C}{k}\right)\leq \exp\left(-C\right)\leq e^{-3},
\end{multline*}
where we used that $\deg_G(v_1)\geq 10C\cdot n/k$ and also $k\geq 5$ and $C\geq 3$. Since this holds for every $v_1\not\in \Vl$, we obtain the second inequality in Lemma \ref{lemma-Chernoff-sequence-G}.
\end{proof}

Let us now prove Lemma \ref{lemma-Vl-large-enough}, which states that $\vert \Vl\vert\geq n/12$.

\begin{proof}[Proof of Lemma \ref{lemma-Vl-large-enough}]
For a sequence $v_1,\dots, v_k$ of (not necessarily distinct) vertices in $V(G)$, define $Z(v_1,\dots,v_k)$ to be the number of indices $j\in \lbrace 2,\dots,k\rbrace$ such that $v_1$ and $v_j$ are adjacent. We first claim that there are at least $n^k/(2e)$ sequences $v_1,\dots, v_k$ with $Z(v_1,\dots,v_k)\leq 4C$.

Recall that we assumed at the beginning of Section \ref{sect-proof-graph-1-e} that there are at least $e^{-1}\cdot n^k/k!$ different $k$-vertex subsets $A\su V(G)$ with $e(A)=\l$. Each of these subsets $A$ satisfies
\[\sum_{v\in A}\deg_A(v)=2e(A)=2\l\leq 2C\cdot k.\]
Hence each of these subsets $A$ can contain at most $k/2$ vertices $v$ with $\deg_A(v)\geq 4C$. So for each $k$-vertex subset $A\su V(G)$ with $e(A)=\l$, there exist at least $k/2$ vertices $v\in A$ with $\deg_A(v)\leq 4C$.

Furthermore note that for each $k$-vertex subset $A\su V(G)$, for any labeling of the vertices of $A$ as $v_1,\dots,v_k$, we have $Z(v_1,\dots,v_k)=\deg_A(v_1)$, since both of these quantities just count how many of the vertices $v_2,\dots,v_k$ are adjacent to $v_1$. So from each $k$-vertex subset $A\su V(G)$ with $e(A)=\l$, we can generate at least $(k/2)\cdot (k-1)!$ labelings of the vertices of $A$ as $v_1,\dots,v_k$ such that $Z(v_1,\dots,v_k)\leq 4C$. Indeed, we can just choose $v_1\in A$ such that $\deg_A(v_1)\leq 4C$ (and we have at least $k/2$ choices to do so), and then choose any labeling of the $k-1$ remaining vertices as $v_2,\dots,v_k$. Since there are at least $e^{-1}\cdot n^k/k!$ different $k$-vertex subsets $A\su V(G)$ with $e(A)=\l$, in total that gives at least
\[\frac{1}{e}\cdot \frac{n^k}{k!}\cdot \frac{k}{2}\cdot (k-1)!=\frac{1}{2e}\cdot n^k\]
sequences $v_1,\dots, v_k$ with $Z(v_1,\dots,v_k)\leq 4C$, as claimed above.

Now suppose that we choose a random sequence $v_1,\dots,v_k$ of vertices in $V(G)$ by picking each element uniformly at random in $V(G)$, as in Lemma \ref{lemma-Chernoff-sequence-G}. Let $Z=Z(v_1,\dots,v_k)$ be the random variable counting the number of indices $j\in \lbrace 2,\dots,k\rbrace$ such that $v_1$ and $v_j$ are adjacent. Since there are at least $n^k/(2e)$ sequences $v_1,\dots, v_k$ with $Z(v_1,\dots,v_k)\leq 4C$, we have
\begin{equation}\label{ineq-prob-Z-4C-1}
\mathbb{P}[Z\leq 4C]\geq \frac{1}{2e}.
\end{equation}

On the other hand, by the second part of Lemma \ref{lemma-Chernoff-sequence-G}, we have
\[\mathbb{P}\left[ Z\leq \frac{1}{2}\cdot \deg_G(v_1)\cdot \frac{k-1}{n}\,\bigg\vert\,  v_1\not\in\Vl\right]<e^{-3}.\]
Recall that for every $v_1\not\in\Vl$, we have $\deg_G(v_1)\geq 10C\cdot n/k$ and therefore (as $k\geq 10^3C\geq 5$)
\[\frac{1}{2}\cdot \deg_G(v_1)\cdot \frac{k-1}{n}\geq \frac{1}{2}\cdot 10C\cdot \frac{n}{k}\cdot \frac{k-1}{n}=5C\cdot \frac{k-1}{k}\geq 4C.\]
Thus,
\begin{equation}\label{ineq-prob-Z-4C-2}
\mathbb{P}\left[ Z\leq 4C\mid v_1\not\in\Vl\right]\leq \mathbb{P}\left[ Z\leq \frac{1}{2}\cdot \deg_G(v_1)\cdot \frac{k-1}{n} \,\bigg\vert\,  v_1\not\in\Vl\right]<e^{-3}<\frac{1}{4e}.
\end{equation}
We clearly have
\[\mathbb{P}[Z\leq 4C]=\mathbb{P}[v_1\in\Vl]\cdot \mathbb{P}\left[ Z\leq 4C\mid v_1\in\Vl\right]+\mathbb{P}[v_1\not\in\Vl]\cdot \mathbb{P}\left[ Z\leq 4C\mid v_1\not\in\Vl\right].\]
Together with (\ref{ineq-prob-Z-4C-1}) and  (\ref{ineq-prob-Z-4C-2}), this implies
\[\frac{1}{2e}\leq \mathbb{P}[Z\leq 4C]\leq \mathbb{P}[v_1\in\Vl]\cdot 1+1\cdot \mathbb{P}\left[ Z\leq 4C\mid v_1\not\in\Vl\right]\leq \mathbb{P}[v_1\in\Vl]+\frac{1}{4e}.\]
So we obtain
\[\mathbb{P}[v_1\in\Vl]\geq \frac{1}{2e}-\frac{1}{4e}=\frac{1}{4e}.\]
Since $v_1$ was chosen uniformly at random inside $V(G)$, this means that
\[\vert \Vl\vert\geq \frac{1}{4e}\cdot n\geq \frac{n}{12}.\]
This finishes the proof of Lemma \ref{lemma-Vl-large-enough}.\end{proof}

Now, let us prove Lemma \ref{lemma-vertex-degree-off}.

\begin{proof}[Proof of Lemma \ref{lemma-vertex-degree-off}]As before, for a sequence $v_1,\dots, v_k$ of (not necessarily distinct) vertices in $V(G)$, let $Z(v_1,\dots,v_k)$ be the number of indices $j\in \lbrace 2,\dots,k\rbrace$ such that $v_1$ and $v_j$ are adjacent. We will prove Lemma \ref{lemma-vertex-degree-off} by considering the number of sequences $v_1,\dots, v_k$ such that
\begin{equation}\label{eq-property-degree-off}
\vert Z(v_1,\dots,v_k)-\deg_G(v_1)\cdot (k-1)/n\vert> \sqrt{k\cdot \log k}.
\end{equation}
The first part of Lemma \ref{lemma-Chernoff-sequence-G} states that when choosing a sequence $v_1,\dots,v_k$ randomly by independently picking each element uniformly from $V(G)$, the probability for (\ref{eq-property-degree-off}) to hold is less than $2/k^2$. Hence there are at most
\[\frac{2}{k^2}\cdot n^k\]
sequences $v_1,\dots, v_k$ satisfying (\ref{eq-property-degree-off}).

On the other hand, for each $k$-vertex subset $A\su V(G)$ with the property that there exists a vertex $v\in A$ with $\left\vert\deg_A(v)-\frac{k-1}{n}\cdot \deg_G(v)\right\vert>\sqrt{k\cdot \log k}$, we can generate at least $(k-1)!$ sequences $v_1,\dots, v_k$ with $\lbrace v_1,\dots,v_k\rbrace=A$ that satisfy (\ref{eq-property-degree-off}). Indeed, first choose $v_1\in A$ to be a vertex with $\left\vert\deg_A(v_1)-\frac{k-1}{n}\cdot \deg_G(v_1)\right\vert>\sqrt{k\cdot \log k}$ (there is at least one such choice) and then choose any labeling of the remaining $k-1$ vertices in $A$ as $v_2, \dots,v_k$ (there are $(k-1)!$ choices). Thus, we obtain at least $(k-1)!$ labelings of the vertices of $A$ as $v_1,\dots, v_k$ such that $\left\vert\deg_A(v_1)-\frac{k-1}{n}\cdot \deg_G(v_1)\right\vert>\sqrt{k\cdot \log k}$. Now, noting that for each such labeling we have $Z(v_1,\dots,v_k)=\deg_A(v_1)$, we see that we indeed generated $(k-1)!$ sequences $v_1,\dots, v_k$ with $\lbrace v_1,\dots,v_k\rbrace=A$ that satisfy (\ref{eq-property-degree-off}). Because of the condition $\lbrace v_1,\dots,v_k\rbrace=A$, these $(k-1)! $ sequences are disjoint for different choices of the set $A$.

Thus, the total number of possible $k$-vertex subsets $A\su V(G)$ with the property that there exists a vertex $v\in A$ with $\left\vert\deg_A(v)-\frac{k-1}{n}\cdot \deg_G(v)\right\vert>\sqrt{k\cdot \log k}$ can be at most
\[\frac{(2/k^2)\cdot n^k}{(k-1)!}=\frac{2}{k}\cdot \frac{n^k}{k!}=o_k(1)\cdot \cdot \frac{n^k}{k!}.\]
This proves Lemma \ref{lemma-vertex-degree-off}.
\end{proof}

\section{Proof of Lemma \ref{lemma-suff-isolated}}\label{sect-suff-isolated}

In this section, we will prove Lemma \ref{lemma-suff-isolated}. So let us assume that we choose a $k$-vertex subset $A\su \Vl\cup \Vm$ uniformly at random. We need to prove that we have $m(A)>(1-\gamma)k$ with probability at most $o_k(1)$. 

First, let $n'=\vert \Vl\cup \Vm\vert$. Then each vertex $v\in \Vl\cup \Vm$ is contained in $A$ with probability $k/n'$. We clearly have $n'\leq n$. On the other hand, Lemma \ref{lemma-Vl-large-enough} implies $n'\geq \vert \Vl\vert\geq n/12$.

Let $Y$ be the random variable counting the number of vertices $v\in \Vl$ with $v\in A$ and such that $v$ is isolated in $A$. Since $m(A)$ counts the number of non-isolated vertices in $A$, we always have $m(A)+Y\leq \vert A\vert = k$. In particular, this gives
\[\mathbb{P}[m(A)>(1-\gamma)k]\leq \mathbb{P}[Y<\gamma k].\]
Hence it suffices to prove that the probability for $Y<\gamma k$ is at most $o_k(1)$.

For each vertex $v\in \Vl$, let $Y_v$ be the indicator random variable for the event that $v\in A$ and $v$ is isolated in $A$. By definition we have
\[Y=\sum_{v\in \Vl}Y_v.\]

For each $v\in \Vl$, let $0\leq \delta(v)\leq 1$ be such that $\delta(v)n'=\vert N(v)\cap (\Vl\cup\Vm)\vert$. Then the number of vertices in $\Vl\cup\Vm\sm \lbrace v\rbrace$ that are not adjacent to $v$ is precisely
\[\vert \Vl\cup\Vm\vert-1-\vert N(v)\cap (\Vl\cup\Vm)\vert=n'-1-\delta(v) n'=(1-\delta(v))n'-1.\]
Since $v\in \Vl$, we have $\delta(v)n'\leq \vert N(v)\vert=\deg_G(v)\leq 10C\cdot n/k$. As $n'\geq n/12$, this gives $\delta(v)\leq 120C/k\leq 1/4$ (recall that we assumed $k\geq 10^3C$ at the beginning of Section \ref{sect-proof-graph-1-e}).

First, for each vertex $v\in \Vl$, let us analyze the expectation $\E[Y_v]$. This is the probability that $v\in A$ and $v$ is isolated in $A$. Note that we have $v\in A$ with probability $k/n'$. Conditioning on this, in order for $v$ to be isolated in $A$, the remaining $k-1$ vertices have to be chosen in such a way that none of them is adjacent to $v$. Since there are precisely $(1-\delta(v))n'-1$ vertices in $\Vl\cup\Vm\sm \lbrace v\rbrace$ that are not adjacent to $v$, the probability for this to happen is
\[\frac{(1-\delta(v))n'-1}{n'-1}\cdot \frac{(1-\delta(v))n'-2}{n'-2}\dotsm \frac{(1-\delta(v))n'-k+1}{n'-k+1}=\frac{((1-\delta(v))n'-1)_{k-1}}{(n'-1)_{k-1}}.\]
Hence we obtain
\[\E[Y_v]=\frac{k}{n'}\cdot \frac{((1-\delta(v))n'-1)_{k-1}}{(n'-1)_{k-1}}.\]
Recall that we are operating under the assumption that $n$ is sufficiently large with respect to $k$. Hence, $n'\geq n/12$ and $(1-\delta(v))n'\geq n'/2\geq n/24$ are also large with respect to $k$. Hence, the falling factorials in the numerator and denominator of the previous expression can be approximated by powers and we obtain
\begin{equation}\label{eq-E-Yv}
\E[Y_v]=(1+o_k(1))\cdot \frac{k}{n'}\cdot \left(\frac{(1-\delta(v))n'}{n'}\right)^{k-1}=(1+o_k(1))\cdot \frac{k}{n'}\cdot (1-\delta(v))^{k-1}
\end{equation}
for each $v\in \Vl$.

Recall that we have $n'\leq n$ and $\delta(v)\leq 120C/k$ for each $v\in \Vl$. Hence we obtain
\[\E[Y_v]=(1+o_k(1))\cdot \frac{k}{n'}\cdot (1-\delta(v))^{k-1}\geq (1+o_k(1))\cdot \frac{k}{n}\cdot \left(1-\frac{120C}{k}\right)^{k-1}=(1+o_k(1))\cdot \frac{k}{n}\cdot e^{-120C},\]
where we used that $\left(1-\frac{120C}{k}\right)^{k-1}$ converges to $e^{-120C}$ as $k\to \infty$.

Letting $\mu=\E[Y]$, we can use this to obtain a lower bound on $\mu$. Indeed, as $\vert \Vl\vert\geq n/12$ by Lemma \ref{lemma-Vl-large-enough}, we have
\[\mu=\E[Y]=\sum_{v\in \Vl}\E[Y_v]\geq \vert \Vl\vert\cdot (1+o_k(1))\cdot \frac{k}{n}\cdot e^{-120C}\geq (1+o_k(1))\cdot \frac{e^{-120C}}{12}\cdot k.\]
So as long as $k$ is sufficiently large, we have
\begin{equation}\label{ineq-mu-lower-bd}
\mu=\E[Y]\geq \frac{e^{-120C}}{24}\cdot k=2\gamma \cdot k.
\end{equation}

We would like to find an upper bound on the variance $\E[(Y-\mu)^2]=\E[Y^2]-\mu^2$. Note that
\[\E[Y^2]=\sum_{(v,w)\in \Vl^2}\E[Y_vY_w].\]

For every pair $(v,w)\in \Vl^2$, let $0\leq \delta(v,w)\leq 1$ be such that
\[\delta(v,w)n'=\vert N(v)\cap N(w)\cap (\Vl\cup\Vm)\vert,\]
which is the number of vertices in $\Vl\cup\Vm$ that are adjacent to both $v$ and $w$. Note that we clearly have $\delta(v,w)\leq \delta(v)$ and $\delta(v,w)\leq \delta(w)$.

We can now analyze the expectation $\E[Y_vY_w]$ for each pair $(v,w)\in \Vl^2$. Note that $Y_vY_w$ is an indicator random variable for the event that $v,w\in A$ and both of $v$ and $w$ are isolated in $A$. This is impossible if $v$ and $w$ are connected by an edge, so in this case we have $\E[Y_vY_w]=0$. Otherwise, the probability for $v,w\in A$ is
\[\frac{k}{n'}\cdot \frac{k-1}{n'-1}\]
and in order for $v$ and $w$ to be isolated in $A$, the remaining $k-2$ vertices have to be chosen in such a way that none of them is adjacent to $v$ or to $w$. The number of vertices in $\Vl\cup\Vm\sm \lbrace v,w\rbrace$ that are not adjacent to $v$ or to $w$ is
\begin{multline*}
\vert \Vl\cup\Vm\vert-2-\vert N(v)\cap (\Vl\cup\Vm)\vert-\vert N(w)\cap (\Vl\cup\Vm)\vert+\vert N(v)\cap N(w)\cap (\Vl\cup\Vm)\vert\\
=n'-2-\delta(v)n'-\delta(w)n'+\delta(v,w)n'=(1-\delta(v)-\delta(w)+\delta(v,w))n'-2.
\end{multline*}
Thus, if $v$ and $w$ are not adjacent, we obtain
\begin{multline*}
\E[Y_vY_w]=\frac{k}{n'}\cdot \frac{k-1}{n'-1}\cdot \frac{((1-\delta(v)-\delta(w)+\delta(v,w))n'-2)_{k-2}}{(n'-2)_{k-2}}\\
=(1+o_k(1)) \frac{k^2}{n'^2}\frac{((1-\delta(v)-\delta(w)+\delta(v,w))n')^{k-2}}{n'^{k-2}}=(1+o_k(1)) \frac{k^2}{n'^2}(1-\delta(v)-\delta(w)+\delta(v,w))^{k-2},
\end{multline*}
again using that $n'\geq n/12$ is large compared to $k$ as well as $\delta(v)\leq 1/4$ and $\delta(w)\leq 1/4$. So for every pair $(v,w)\in \Vl^2$ (independently of whether or not $v$ and $w$ are adjacent) we have
\[\E[Y_vY_w]\leq (1+o_k(1))\frac{k^2}{n'^2}(1-\delta(v)-\delta(w)+\delta(v,w))^{k-2}=(1+o_k(1))\frac{k^2}{n'^2}(1-\delta(v)-\delta(w)+\delta(v,w))^{k-1},\]
where in the last step we used that $1\geq 1-\delta(v)-\delta(w)+\delta(v,w)\geq 1-(240C/k)$ as $\delta(v)\leq 120C/k$, $\delta(w)\leq 120C/k$ and $\delta(v,w)\leq \delta(w)$.

Note that using $\delta(v,w)\leq \delta(w)$ again, this in particular gives
\begin{equation}\label{ineq-special}
\E[Y_vY_w]\leq (1+o_k(1))\cdot\frac{k^2}{n'^2}\cdot(1-\delta(v))^{k-1}=(1+o_k(1))\cdot\frac{k}{n'}\cdot \E[Y_v]
\end{equation}
for all pairs $(v,w)\in \Vl^2$, where in the last step we plugged in (\ref{eq-E-Yv}).

On the other hand, for all pairs $(v,w)\in \Vl^2$ we also obtain
\begin{multline}\label{ineq-non-special}
\E[Y_vY_w]\leq (1+o_k(1))\cdot \frac{k^2}{n'^2}\cdot(1-\delta(v)-\delta(w)+\delta(v,w))^{k-1}\\
\leq (1+o_k(1))\cdot \frac{k^2}{n'^2}\cdot(1-\delta(v)-\delta(w)+\delta(v)\delta(w)+\delta(v,w))^{k-1}\\
=(1+o_k(1))\cdot \frac{k^2}{n'^2}\cdot(1-\delta(v))^{k-1}\cdot(1-\delta(w))^{k-1}\cdot \left(1+\frac{\delta(v,w)}{(1-\delta(v))\cdot (1-\delta(w))}\right)^{k-1}\\
\leq (1+o_k(1))\cdot\E[Y_v]\cdot \E[Y_w]\cdot (1+2\delta(v,w))^{k-1}\leq (1+o_k(1))\cdot\E[Y_v]\cdot \E[Y_w]\cdot e^{2k\cdot \delta(v,w)},
\end{multline}
where in the second-last step we used (\ref{eq-E-Yv}) as well as $(1-\delta(v))\cdot (1-\delta(w))\geq 9/16\geq 1/2$ (since $\delta(v)\leq 1/4$ and $\delta(w)\leq 1/4$).

Let us call a pair $(v,w)\in \Vl^2$ \emph{special} if $\delta(v,w)\geq 1/(k\cdot \log k)$ and  \emph{non-special} otherwise.

For each non-special pair $(v,w)\in \Vl^2$ we have $2k\cdot \delta(v,w)< 2/\log k$ and so (\ref{ineq-non-special}) gives
\[\E[Y_vY_w]\leq (1+o_k(1))\cdot\E[Y_v]\cdot \E[Y_w]\cdot e^{2/\log k}=(1+o_k(1))\cdot\E[Y_v]\cdot \E[Y_w].\]
Hence
\begin{multline}\label{ineq-non-special-2}
\sum_{\substack{(v,w)\in \Vl^2\\\text{non-special}}}\E[Y_vY_w]\leq (1+o_k(1))\cdot \sum_{\substack{(v,w)\in \Vl^2\\\text{non-special}}}\E[Y_v]\cdot \E[Y_w]\leq (1+o_k(1))\cdot \sum_{(v,w)\in \Vl^2}\E[Y_v]\cdot \E[Y_w]\\
= (1+o_k(1))\cdot\E[Y]^2=(1+o_k(1))\cdot\mu^2
\end{multline}

Recall that each vertex $v\in \Vl$ satisfies $\deg_G(v)\leq 10C\cdot n/k$. In particular, $v$ has at most $10C\cdot n/k$ neighbors $u\in \Vl\cup \Vm$. Each of these neighbors $u$ has degree $\deg_G(u)\leq 10C\cdot n/(\log^2 k)$, so $u$ has at most $10C\cdot n/(\log^2 k)$ neighbors $w\in \Vl$. Thus, for each $v\in \Vl$, the number of triples $(v,w,u)\in \Vl\times \Vl\times (\Vl\cup \Vm)$ such that $u$ is a common neighbor of $v$ and $w$ is at most
\[10C\cdot \frac{n}{k}\cdot 10C\cdot \frac{n}{\log^2 k}=100C^2\cdot \frac{n^2}{k\cdot \log^2 k}.\]
On the other hand the number of such triples $(v,w,u)\in \Vl\times \Vl\times (\Vl\cup \Vm)$ is precisely
\[\sum_{w\in \Vl}\vert N(v)\cap N(w)\cap (\Vl\cup\Vm)\vert=\sum_{w\in \Vl}\delta(v,w)n'.\]
So we obtain (using $n'\geq n/12$)
\[\sum_{w\in \Vl}\delta(v,w)\leq \frac{1}{n'}\cdot 100C^2\cdot \frac{n^2}{k\cdot \log^2 k}\leq \frac{1}{n'}\cdot 120^2C^2\cdot \frac{n'^2}{k\cdot \log^2 k} =120^2C^2\cdot \frac{n'}{k\cdot \log^2 k}.\]
For each $w\in \Vl$ such that $(v,w)$ is special we have $\delta(v,w)\geq 1/(k\cdot \log k)$. Thus, for each $v\in \Vl$, the number of $w\in \Vl$ such that $(v,w)$ is special can be at most
\[\frac{120^2C^2\cdot n'/(k\cdot \log^2 k)}{1/(k\cdot \log k)}=120^2C^2\cdot\frac{n'}{\log k}.\]
Using (\ref{ineq-special}), we now obtain
\begin{multline}\label{ineq-special-2}
\sum_{\substack{(v,w)\in \Vl^2\\\text{special}}}\E[Y_vY_w]\leq (1+o_k(1))\cdot \frac{k}{n'}\cdot \sum_{\substack{(v,w)\in \Vl^2\\\text{special}}}\E[Y_v]\\
\leq (1+o_k(1))\cdot \frac{k}{n'}\cdot \sum_{v\in \Vl}\left(120^2C^2\cdot\frac{n'}{\log k}\cdot\E[Y_v]\right)\\
= (1+o_k(1))\cdot 120^2C^2\cdot \frac{k}{\log k}\cdot \sum_{v\in \Vl}\E[Y_v] =(1+o_k(1))\cdot 120^2C^2\cdot \frac{k}{\log k}\cdot\mu\leq o_k(1)\cdot \mu^2,
\end{multline}
where in the last step we used that $k/\log k\leq o_k(1)\cdot \mu$ as $\mu\geq (e^{-120C}/24)\cdot k$ by (\ref{ineq-mu-lower-bd}).

Combining (\ref{ineq-non-special-2}) and (\ref{ineq-special-2}), we now obtain
\[\E[Y^2]=\sum_{(v,w)\in \Vl^2}\E[Y_vY_w]=\sum_{\substack{(v,w)\in \Vl^2\\\text{non-special}}}\E[Y_vY_w]+\sum_{\substack{(v,w)\in \Vl^2\\\text{special}}}\E[Y_vY_w]\leq (1+o_k(1))\cdot\mu^2.\]
Thus,
\[\E[(Y-\mu)^2]=\E[Y^2]-\mu^2=o_k(1)\cdot \mu^2.\]
Now, Markov's inequality gives
\[\mathbb{P}[Y\leq \mu/2]\leq \mathbb{P}[(Y-\mu)^2\geq \mu^2/4]\leq \frac{\E[(Y-\mu)^2]}{\mu^2/4}\leq \frac{o_k(1)\cdot \mu^2}{\mu^2/4}=o_k(1).\]
Using that $\mu/2\geq \gamma k$ by (\ref{ineq-mu-lower-bd}), this implies
\[\mathbb{P}[Y\leq \gamma k]\leq \mathbb{P}[Y\leq \mu/2]\leq o_k(1).\]
This finishes the proof of Lemma \ref{lemma-suff-isolated}.

\section{Proof of Lemma \ref{lemma-A-interesting}}\label{sect-A-interesting}

In this section we will prove Lemma \ref{lemma-A-interesting}, which states that a $k$-vertex subset $A\su V(G)$ chosen uniformly at random is interesting with probability at most $e^{-1}+o_k(1)$.

First we start with an easy observation: In order for $A$ to be interesting, we need $A\cap \Vh\neq \emptyset$. Each of the $\vert \Vh\vert$ vertices of $\Vh$ is contained in $A$ with probability $k/n$. Hence the probability for $A\cap \Vh\neq \emptyset$ is at most $\vert \Vh\vert\cdot k/n$, and in particular the probability that $A$ is interesting is at most $\vert \Vh\vert\cdot k/n$. Thus, Lemma \ref{lemma-A-interesting} is definitely true if $\vert \Vh\vert\leq e^{-1}\cdot n/k$. So from now, we may assume that $\vert \Vh\vert\geq e^{-1}\cdot n/k$. Since $n$ is large with respect to $k$, this in particular implies $\vert \Vh\vert \geq k$.

Also recall that by Lemma \ref{lemma-Vl-large-enough} we have $\vert \Vm\cup \Vl\vert\geq \vert \Vl\vert\geq n/12$, so we also have $\vert \Vm\cup \Vl\vert\geq k$ as long as $n$ is large enough with respect to $k$.

For a uniformly random $k$-vertex subset $A\su V(G)$, let us consider the quantity $\vert A\cap \Vh\vert$. Clearly $0\leq \vert A\cap \Vh\vert\leq k$. For each $j=0,\dots,k$, let $p_j$ be the probability that $\vert A\cap \Vh\vert=j$ when $A$ is chosen uniformly at random among all $k$-vertex subsets of $V(G)$. Then $p_0+\dots+p_k=1$.

The following claim can be checked via a straightforward calculation. We provide a proof in the appendix.

\begin{claim}\label{claim-p-j-1-e}
We have $p_j\leq e^{-1}+o_k(1)$ for all $1\leq j\leq k-1$.
\end{claim}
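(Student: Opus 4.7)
The plan is to recognize $p_j$ as a hypergeometric probability, approximate it by a binomial (using that $n$ is large with respect to $k$), maximize the binomial over its parameter, and bound the resulting extremum using Stirling's formula. Writing $h := |\Vh|$, we have
\[p_j = \frac{\binom{h}{j}\binom{n-h}{k-j}}{\binom{n}{k}} = \binom{k}{j}\cdot\frac{(h)_j\,(n-h)_{k-j}}{(n)_k}.\]
The elementary bounds $(h)_j \leq h^j$, $(n-h)_{k-j}\leq (n-h)^{k-j}$, and $(n)_k \geq (1-k/n)^k n^k$, together with $n$ being large with respect to $k$, give
\[p_j \leq (1+o_n(1))\binom{k}{j}p^j(1-p)^{k-j}, \qquad p := h/n \in [0,1].\]

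Since $p$ could be any value in $[0,1]$, I would next maximize the right-hand side over $p$. A routine derivative calculation shows the maximum occurs at $p=j/k$, so the task reduces to showing
\[f(j,k) := \binom{k}{j}\left(\frac{j}{k}\right)^j\left(\frac{k-j}{k}\right)^{k-j}\leq e^{-1}+o_k(1)\]
uniformly for $1\leq j\leq k-1$.

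This final step calls for a case split by the value of $j$. The boundary cases $j\in\{1,k-1\}$ are the \emph{tight} ones: direct computation gives $f(j,k) = (1-1/k)^{k-1}$, which converges to $e^{-1}$ from above at rate $O(1/k)$. For $2\leq j\leq k-2$, I would apply Stirling's formula to the three factorials in $\binom{k}{j}$ (using $k!\leq \sqrt{2\pi k}(k/e)^k e^{1/(12k)}$ together with $j!\geq \sqrt{2\pi j}(j/e)^j$ and the analogous bound for $(k-j)!$) to obtain
\[f(j,k)\leq \sqrt{\frac{k}{2\pi j(k-j)}}\cdot e^{1/(12k)}.\]
On this range the product $j(k-j)$ is minimized at the endpoints $j\in\{2,k-2\}$, where it equals $2(k-2)$, so the right-hand side is at most $\tfrac{1}{2\sqrt{\pi}}+O(1/k)$, which is strictly below $e^{-1}$ for all sufficiently large $k$. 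Combining the two cases yields $f(j,k)\leq e^{-1}+o_k(1)$, and hence $p_j\leq e^{-1}+o_k(1)$, as required.

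The only real obstacle is conceptual: one must notice that the extremum is attained on the \emph{boundary} of the index range rather than in the bulk. Indeed, Stirling's formula alone is too weak near $j=1$, yielding only the loose bound $1/\sqrt{2\pi}>e^{-1}$, so the endpoint cases $j\in\{1,k-1\}$ must be separated out and handled by direct calculation, while the interior range $2\leq j\leq k-2$ enjoys the substantially stronger Stirling bound $\tfrac{1}{2\sqrt{\pi}}+o_k(1)$.
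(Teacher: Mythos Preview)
Your proof is correct and follows essentially the same route as the paper's: both reduce to bounding $\binom{k}{j}(j/k)^j((k-j)/k)^{k-j}$ by maximizing the binomial term over its parameter (you use a derivative, the paper uses AM-GM, with identical outcome), then split off the endpoint case $j=1$ (and by symmetry $j=k-1$) for direct computation while handling the bulk via Stirling's formula to get the strictly smaller bound $1/\sqrt{4\pi}+o_k(1)$.
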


We could now model the uniform random choice of the $k$-vertex subset $A\su V(G)$ in the following way: First, choose a random variable $a\in \lbrace 0,\dots,k\rbrace$ by taking $a=j$ with probability $p_j$ for each $j=0,\dots,k$. Then choose an $a$-vertex subset of $\Vh$ uniformly at random and independently of that choose a $(k-a)$-vertex subset of $V(G)\sm \Vh=\Vl\cup \Vm$ uniformly at random. Then taking $A$ to be the union of those two sets results in a uniformly random choice of $A$ among all $k$-vertex subsets of $V(G)$.

However, we will need a slightly more complicated variant of this idea. So instead, let us model the random choice of $A$ as follows, where each of the three steps is performed independently.
\begin{itemize}
\item First choose a sequence $v_1,\dots,v_k$ of distinct vertices in $\Vh$ uniformly at random among all such sequences of distinct vertices.
\item Then choose a $k$-vertex subset $W\su \Vm\cup \Vl$ uniformly at random. Now, choose a vertex $w_1\in W$ uniformly at random. Afterwards, choose a vertex $w_2\in W\sm \lbrace w_1\rbrace$ uniformly at random. Continue like this, for each $i=1,\dots,k$ choosing a vertex $w_i\in W\sm \lbrace w_1,\dots,w_{i-1}\rbrace$ uniformly at random. This results in a sequence $w_1,\dots,w_k$ of distinct vertices of $\Vm\cup \Vl$ with $W=
\lbrace w_1,\dots,w_k\rbrace$.
\item Finally, choose a random variable $a\in \lbrace 0,\dots,k\rbrace$ by taking $a=j$ with probability $p_j$ for each $j=0,\dots,k$.
\end{itemize}
After having made all these random choices, set $A=\lbrace v_1,\dots,v_a\rbrace\cup \lbrace w_{a+1},\dots,w_k\rbrace$.

Note that for every fixed $a$, the set $\lbrace v_1,\dots,v_a\rbrace$ will be uniformly random among all $a$-vertex subsets of $\Vh$ and the set $\lbrace w_{a+1},\dots,w_k\rbrace$ will be uniformly random among all $(k-a)$-vertex subsets of $\Vm\cup \Vl$. Therefore, this random procedure yields the same probability distribution for the set $A$ as the slightly simpler procedure described above. In particular, we see that the resulting set $A$ will have a uniform distribution among all $k$-vertex subsets of $V(G)$.

So let us from now on assume that the random set $A$ is chosen in the described way. Our goal is to show that $A$ is interesting with probability at most $e^{-1}+o_k(1)$.

If $A$ is interesting, we must have $A\cap \Vh\neq\emptyset$ and therefore $a=\vert A\cap \Vh\vert\geq 1$.

Furthermore, if $A$ is interesting, for each vertex $v_j$ with $1\leq j\leq a$ we must have
\begin{equation}\label{ineq-v-j-A-int}
\left\vert\deg_A(v_j)-\frac{k-1}{n}\cdot \deg_G(v_j)\right\vert\leq  \sqrt{k\cdot \log k}.
\end{equation}
Hence, for each vertex $v_j$ with $1\leq j\leq a$,
\begin{multline*}
\deg_A(v_j)\geq \frac{k-1}{n}\cdot \deg_G(v_j)- \sqrt{k\cdot \log k}\geq 
\frac{k-1}{n}\cdot 10C\cdot \frac{n}{\log^2 k}- \sqrt{k\cdot \log k}\\
\geq 10C\cdot \frac{k/2}{\log^2 k}- \sqrt{k\cdot \log k}\geq 5C\cdot \frac{k}{\log^2 k}- \frac{k}{\log^2 k}\geq 4C\cdot \frac{k}{\log^2 k},
\end{multline*}
where we used that $v_j\in \Vh$ as well as the assumption $k\geq 4\cdot \log^{10} k$ made at the beginning of Section \ref{sect-proof-graph-1-e}.

On the other hand, if $A$ is interesting, we must also have $e(A)=\l$ and therefore
\[\sum_{j=1}^{a}\deg_A(v_j)\leq \sum_{v\in A}\deg_A(v)=2e(A)=2\l\leq 2C\cdot k.\]
Thus, if $A$ is interesting, we obtain
\[a\cdot 4C\cdot \frac{k}{\log^2 k}\leq \sum_{j=1}^{a}\deg_A(v_j)\leq 2C\cdot k\]
and consequently $a\leq (\log^2 k)/2\leq \log^2 k$.

So we have shown that if $A$ is interesting, then we must have $1\leq a\leq \log^2 k$.

Note that we always have
\[e(A)=e(\lbrace v_1,\dots,v_a\rbrace\cup \lbrace w_{a+1},\dots,w_k\rbrace)=\sum_{j=1}^{a}\deg_A(v_j)-e(\lbrace v_1,\dots,v_a\rbrace)+e(\lbrace w_{a+1},\dots,w_k\rbrace).\]
If $A$ is interesting, then we have $e(A)=\l$ and therefore
\[e(W\sm \lbrace w_{1},\dots,w_a\rbrace)=e(\lbrace w_{a+1},\dots,w_k\rbrace)=\l+e(\lbrace v_1,\dots,v_a\rbrace)-\sum_{j=1}^{a}\deg_A(v_j).\]
Now, (\ref{ineq-v-j-A-int}) and $a\leq \log^2 k$ yield
\begin{multline*}
e(W\sm \lbrace w_{1},\dots,w_a\rbrace)\geq \l-\sum_{j=1}^{a}\left(\frac{k-1}{n}\cdot \deg_G(v_j)+\sqrt{k\cdot \log k}\right)\\
\geq \l-\frac{k-1}{n}\cdot\left(\sum_{j=1}^{a}\deg_G(v_j)\right)-\log^2 k\cdot \sqrt{k\cdot \log k}\\
\geq \l-\frac{k-1}{n}\cdot\left(\sum_{j=1}^{a}\deg_G(v_j)\right)-\sqrt{k}\cdot \log^3 k
\end{multline*}
as well as
\begin{multline*}
e(W\sm \lbrace w_{1},\dots,w_a\rbrace)\leq \l+{a\choose 2}-\sum_{j=1}^{a}\left(\frac{k-1}{n}\cdot \deg_G(v_j)-\sqrt{k\cdot \log k}\right)\\
\leq \l+(\log^2 k)^2-\frac{k-1}{n}\cdot\left(\sum_{j=1}^{a}\deg_G(v_j)\right)+\log^2 k\cdot \sqrt{k\cdot \log k}\\
\leq \l-\frac{k-1}{n}\cdot\left(\sum_{j=1}^{a}\deg_G(v_j)\right)+\sqrt{k}\cdot \log^3 k.
\end{multline*}

All in all, if $A$ is interesting, we must have $1\leq a\leq \lfloor\log^2 k\rfloor$ and
\[ \l-\frac{k-1}{n}\left(\sum_{j=1}^{a}\deg_G(v_j)\right)-\sqrt{k}\cdot \log^3 k\leq e(W\sm \lbrace w_{1},\dots,w_a\rbrace)\leq \l-\frac{k-1}{n}\left(\sum_{j=1}^{a}\deg_G(v_j)\right)+\sqrt{k}\cdot \log^3 k.\]

For $t=1,\dots,\lfloor\log^2 k\rfloor$, let us define $\mathscr{E}_t$ to be the event
\[ \l-\frac{k-1}{n}\left(\sum_{j=1}^{t}\deg_G(v_j)\right)-\sqrt{k}\cdot \log^3 k\leq e(W\sm \lbrace w_{1},\dots,w_t\rbrace)\leq \l-\frac{k-1}{n}\left(\sum_{j=1}^{t}\deg_G(v_j)\right)+\sqrt{k}\cdot \log^3 k.\]
Note that this event $\mathscr{E}_t$ only depends on $v_1,\dots,v_t$ as well as $W$ and $w_1,\dots,w_t$. In particular, $\mathscr{E}_t$ is not influenced by the random choice of $a$ and the random choices of $w_{t+1},\dots,w_k$ inside $W$ (recall that we chose $w_{t+1},\dots,w_k$ after $W$ and $w_1,\dots,w_t$).

We saw above that in order for $A$ to be interesting, we must have $1\leq a\leq \lfloor\log^2 k\rfloor$ and the event $\mathscr{E}_a$ needs to hold. Since each of the events $\mathscr{E}_t$ is independent of the choice of $a$, this implies
\[\mathbb{P}\left[A\text{ interesting}\right]\leq \mathbb{P}\left[1\leq a\leq \lfloor\log^2 k\rfloor\text{ and }\mathscr{E}_a\text{ holds}\right]=\sum_{t=1}^{\lfloor\log^2 k\rfloor} \mathbb{P}[a=t]\cdot \mathbb{P}[\mathscr{E}_t]=\sum_{t=1}^{\lfloor\log^2 k\rfloor} p_t\cdot \mathbb{P}[\mathscr{E}_t]\]
By Claim \ref{claim-p-j-1-e} we have $p_t\leq e^{-1}+o_k(1)$ for $t=1, \dots, \lfloor\log^2 k\rfloor$. So we obtain
\begin{equation}\label{ineq-A-interesting-E-t}
\mathbb{P}\left[A\text{ interesting}\right]\leq \left(\frac{1}{e}+o_k(1)\right)\cdot \sum_{t=1}^{\lfloor\log^2 k\rfloor} \mathbb{P}[\mathscr{E}_t].
\end{equation}

Our goal is now to prove that $\sum_{t=1}^{\lfloor\log^2 k\rfloor} \mathbb{P}[\mathscr{E}_t]\leq 1+o_k(1)$. First, note that by the inclusion-exclusion principle we have
\[0\leq \mathbb{P}\left[\text{no }\mathscr{E}_t\text{ for }t=1, \dots, \lfloor\log^2 k\rfloor\text{ holds}\right]\\
\leq 1-\sum_{t=1}^{\lfloor\log^2 k\rfloor} \mathbb{P}[\mathscr{E}_t]+\sum_{1\leq t<t'\leq \lfloor\log^2 k\rfloor}\mathbb{P}[\mathscr{E}_t\wedge\mathscr{E}_{t'}].\]
Thus,
\begin{equation}\label{ineq-sum-e-t}
\sum_{t=1}^{\lfloor\log^2 k\rfloor} \mathbb{P}[\mathscr{E}_t]\leq 1+\sum_{1\leq t<t'\leq \lfloor\log^2 k\rfloor}\mathbb{P}[\mathscr{E}_t\wedge\mathscr{E}_{t'}].
\end{equation}

We will now find an upper bound for each of the terms $\mathbb{P}[\mathscr{E}_t\wedge\mathscr{E}_{t'}]$ for $1\leq t<t'\leq \lfloor\log^2 k\rfloor$.

So let us, for now, fix some indices $t$ and $t'$ with $1\leq t<t'\leq \lfloor\log^2 k\rfloor$. Let $W'=W\sm \lbrace w_1,\dots,w_t\rbrace$. If both $\mathscr{E}_t$ and $\mathscr{E}_{t'}$ hold, we must have
\[e(W')=e(W\sm \lbrace w_{1},\dots,w_t\rbrace)\geq \l-\frac{k-1}{n}\cdot\left(\sum_{j=1}^{t}\deg_G(v_j)\right)-\sqrt{k}\cdot \log^3 k\]
and
\[e(W'\sm \lbrace w_{t+1},\dots,w_{t'}\rbrace)=e(W\sm \lbrace w_{1},\dots,w_{t'}\rbrace)\leq \l-\frac{k-1}{n}\cdot\left(\sum_{j=1}^{t'}\deg_G(v_j)\right)+\sqrt{k}\cdot \log^3 k.\]
Hence
\begin{multline*}
\sum_{j=t+1}^{t'}\deg_{W'}(w_j)\geq e(W')-e(W'\sm \lbrace w_{t+1},\dots,w_{t'}\rbrace)\\
\geq \left(\l-\frac{k-1}{n}\cdot\left(\sum_{j=1}^{t}\deg_G(v_j)\right)-\sqrt{k}\cdot \log^3 k\right)-\left(\l-\frac{k-1}{n}\cdot\left(\sum_{j=1}^{t'}\deg_G(v_j)\right)+\sqrt{k}\cdot \log^3 k\right)\\
=\frac{k-1}{n}\cdot\left(\sum_{j=t+1}^{t'}\deg_G(v_j)\right)-2\cdot \sqrt{k}\cdot \log^3 k\geq 
\frac{k-1}{n}\cdot(t'-t)\cdot 10C\cdot \frac{n}{\log^2 k}-2\cdot \sqrt{k}\cdot \log^3 k\\
\geq (t'-t)\cdot 9C\cdot \frac{k}{\log^2 k}-2\cdot \sqrt{k}\cdot \log^3 k
\geq (t'-t)\cdot 8C\cdot \frac{k}{\log^2 k},
\end{multline*}
where we used $\deg_G(v_j)\geq 10C\cdot n/(\log^2 k)$ for $j=t+1,\dots,t'$ as $v_j\in \Vh$, as well as $(k-1)/k\geq 9/10$ and $2\sqrt{k}\cdot \log^3 k\leq k/(\log^2 k)$ since we assumed $k\geq 4\cdot \log^{10} k$ at the beginning of Section \ref{sect-proof-graph-1-e}. Thus, at least one of the vertices $w_j$ for $j=t+1,\dots,t'$ must satisfy $\deg_{W'}(w_j)\geq 8C\cdot k/(\log^2 k)$ if both of the events $\mathscr{E}_t$ and $\mathscr{E}_{t'}$ hold.

Let us now fix any choice of $v_1,\dots,v_k$, $W$ and $w_1,\dots,w_t$ such that the event $\mathscr{E}_t$ holds. We want to prove that then the probability for $\mathscr{E}_{t'}$ to hold as well is small (the choice of $w_{t+1},\dots,w_k$ in $W\sm \lbrace w_1,\dots,w_t\rbrace$ is still random as described above). As before, let $W'=W\sm \lbrace w_1,\dots,w_t\rbrace$. Since $\mathscr{E}_t$ holds we have
\[e(W')=e(W\sm \lbrace w_1,\dots,w_t\rbrace)\leq \l-\frac{k-1}{n}\cdot\left(\sum_{j=1}^{t}\deg_G(v_j)\right)+\sqrt{k}\cdot \log^3 k\leq C\cdot k+\sqrt{k}\cdot \log^3 k\leq 2C\cdot k,\]
where we used the assumption $k\geq 4\cdot \log^{10} k$ again. Thus,
\[\sum_{w\in W'}\deg_{W'}(w)=2e(W')\leq 4C\cdot k.\]
This in particular implies that the number of vertices $w\in W'$ with $\deg_{W'}(w)\geq 8C\cdot k/(\log^2 k)$ is at most $(\log^2 k)/2$. We saw above that in order for the event $\mathscr{E}_{t'}$ to hold in addition to $\mathscr{E}_{t}$, we must have $\deg_{W'}(w_j)\geq 8C\cdot k/(\log^2 k)$ for at least one index $j\in \lbrace t+1,\dots,t'\rbrace$. There are $t'-t$ choices for this index $j$ and for each fixed $j\in \lbrace t+1,\dots,t'\rbrace$ the vertex $w_j$ is a uniformly random element of $W'=W\sm \lbrace w_1,\dots,w_t\rbrace$. Hence for each fixed $j$ the probability of $\deg_{W'}(w_j)\geq 8C\cdot k/(\log^2 k)$ is at most
\[\frac{(\log^2 k)/2}{\vert W'\vert}=\frac{(\log^2 k)/2}{k-t}\leq \frac{(\log^2 k)/2}{k-\log^2 k}\leq \frac{(\log^2 k)/2}{k/2}=\frac{\log^2 k}{k}.\]
By the union bound, the probability for there to exist an index $j\in \lbrace t+1,\dots,t'\rbrace$ with $\deg_{W'}(w_j)\geq 8C\cdot k/(\log^2 k)$ is therefore at most
\[(t'-t)\cdot \frac{\log^2 k}{k}\leq (\log^2 k)\cdot \frac{\log^2 k}{k}=\frac{\log^4 k}{k},\]
where we used that $1\leq t<t'\leq \lfloor\log^2 k\rfloor$. This proves that for any fixed choice of $v_1,\dots,v_k$, $W$ and $w_1,\dots,w_t$ such that the event $\mathscr{E}_t$ holds, the probability for the event $\mathscr{E}_{t'}$ to hold as well is at most $(\log^4 k)/k$. Hence, the overall probability for both $\mathscr{E}_t$ and $\mathscr{E}_{t'}$ to hold is also at most $(\log^4 k)/k$.

So we have shown that for any fixed indices $t$ and $t'$ with $1\leq t<t'\leq \lfloor\log^2 k\rfloor$, we have
\[\mathbb{P}[\mathscr{E}_t\wedge\mathscr{E}_{t'}]\leq \frac{\log^4 k}{k}.\]
Thus, (\ref{ineq-sum-e-t}) yields
\[\sum_{t=1}^{\lfloor\log^2 k\rfloor} \mathbb{P}[\mathscr{E}_t]\leq 1+\sum_{1\leq t<t'\leq \lfloor\log^2 k\rfloor}\mathbb{P}[\mathscr{E}_t\wedge\mathscr{E}_{t'}]\leq 1+\sum_{1\leq t<t'\leq \lfloor\log^2 k\rfloor}\frac{\log^4 k}{k}\leq 1+\frac{\log^8 k}{k}=1+o_k(1).\]
Plugging this into (\ref{ineq-A-interesting-E-t}), we obtain
\[\mathbb{P}\left[A\text{ interesting}\right]\leq \left(\frac{1}{e}+o_k(1)\right)\cdot \sum_{t=1}^{\lfloor\log^2 k\rfloor} \mathbb{P}[\mathscr{E}_t]\leq \left(\frac{1}{e}+o_k(1)\right)\cdot (1+o_k(1))=\frac{1}{e}+o_k(1),\]
as desired. This finishes the proof of Lemma \ref{lemma-A-interesting}.

%%% AUTHOR: optional appendix here
\appendix %% you may comment this out if no Appendix
\section*{Appendix}

Here, we provide a proof of Claim \ref{claim-p-j-1-e}. As mentioned above, the proof is a straightforward calculation.

Let $1\leq j\leq k-1$. By definition, $p_j$ is the probability that a uniformly random $k$-vertex subset $A\su V(G)$ satisfies $\vert A\cap \Vh\vert =j$. Hence
\begin{multline*}
p_j={k\choose j}\cdot \frac{(\vert \Vh\vert)_j\cdot \vert (\Vl\cup \Vm\vert)_{k-j}}{(n)_k}\leq {k\choose j}\cdot \frac{\vert \Vh\vert^j\cdot \vert \Vl\cup \Vm\vert^{k-j}}{(n)_k}\\
=(1+o_k(1))\cdot {k\choose j}\cdot \frac{\vert \Vh\vert^j\cdot \vert \Vl\cup \Vm\vert^{k-j}}{n^k},
\end{multline*}
where in the last step we used that $n$ is large in terms of $k$.

Let $x$ be such that $\vert \Vh\vert=xn$, then $\vert \Vl\cup \Vm\vert=(1-x)n$. Recall that we assumed $\vert \Vh\vert\geq k$ and $\vert \Vl\cup \Vm\vert\geq k$ at the beginning of Section \ref{sect-A-interesting}, and consequently $0<x<1$. Now we have
\[p_j\leq (1+o_k(1))\cdot {k\choose j}\cdot \frac{(xn)^j\cdot ((1-x)n)^{k-j}}{n^k}=(1+o_k(1))\cdot \frac{k!}{j!\cdot (k-j)!}\cdot x^{j}\cdot (1-x)^{k-j}.\]
Note that the inequality between arithmetic and geometric mean implies
\begin{multline*}
x^j(1-x)^{k-j}=j^j\cdot (k-j)^{k-j}\cdot\left(\frac{x}{j}\right)^j\cdot\left(\frac{1-x}{k-j}\right)^{k-j}\\
\leq j^j\cdot (k-j)^{k-j}\cdot\left(\frac{j\cdot \frac{x}{j}+(k-j)\cdot \frac{1-x}{k-j}}{j+k-j}\right)^{j+k-j}=j^j\cdot (k-j)^{k-j}\cdot\left(\frac{1}{k}\right)^{k}.
\end{multline*}
Thus,
\[p_j\leq (1+o_k(1))\cdot \frac{k!}{j!\cdot (k-j)!}\cdot x^{j}\cdot (1-x)^{k-j}\leq (1+o_k(1))\cdot \frac{j^j}{j!}\cdot \frac{(k-j)^{k-j}}{(k-j)!}\cdot \frac{k!}{k^k}.\]
Hence it suffices to prove
\[\frac{j^j}{j!}\cdot\frac{(k-j)^{k-j}}{(k-j)!}\cdot \frac{k!}{k^k}\leq \frac{1}{e}+o_k(1)\]
for all $1\leq j\leq k-1$. For this, we can assume without loss of generality that $1\leq j\leq k/2$.

If $2\leq j\leq k/2$, then $j\cdot (k-j)\geq 2\cdot (k-2)$ and so Stirling's formula (see for example \cite{robbins}) yields
\begin{multline*}
\frac{j^j}{j!}\cdot\frac{(k-j)^{k-j}}{(k-j)!}\cdot \frac{k!}{k^k}\leq \left(\sqrt{2\pi}\cdot \sqrt{j}\cdot e^{-j}\right)^{-1} \left(\sqrt{2\pi}\cdot \sqrt{k-j}\cdot e^{-k+j}\right)^{-1} \left((1+o_k(1))\sqrt{2\pi}\cdot \sqrt{k}\cdot e^{-k}\right)\\
=(1+o_k(1)) \frac{1}{\sqrt{2\pi}}\cdot \sqrt{\frac{k}{j\cdot (k-j)}}\leq (1+o_k(1)) \frac{1}{\sqrt{2\pi}}\cdot \sqrt{\frac{k}{2\cdot (k-2)}}=(1+o_k(1)) \frac{1}{\sqrt{4\pi}}< \frac{1}{e}+o_k(1),
\end{multline*}
where in the last step we used $4\pi>12>9>e^2$. So it only remains to consider the case $j=1$. Then we simply have
\[\frac{1^1}{1!}\cdot\frac{(k-1)^{k-1}}{(k-1)!}\cdot \frac{k!}{k^k}=\frac{(k-1)^{k-1}}{k^{k-1}}=\left(1-\frac{1}{k}\right)^{k-1}\leq e^{-(k-1)/k}=e^{-1}\cdot e^{1/k}=\frac{1}{e}+o_k(1).\]
This finishes the proof of Claim \ref{claim-p-j-1-e}.

%%% AUTHOR: optional acknowledgments here
\section*{Acknowledgments} %%  you may comment this out if no Ackno
We would like to thank the anonymous referee for their helpful comments.
\\
\\
\noindent\textit{Remark.} After completing this work, we learned that Martinsson, Mousset, Noever, and Truji\'c \cite{martinsson-et-al}  also found a completion of the proof of the Edge-statistics Conjecture.

%%% AUTHOR:
%%% Bibliography goes here. Note that the arXiv cannot process bibtex
%%% or biber bibliographies.  Example of acceptable bibliograpy format:
\bibliographystyle{amsplain}

%% AUTHOR: You can generate such a bibliography from a .bib file by 
%% running pdflatex/bibtex/pdflatex/pdflatex and then pasting the .bbl file
%% between \begin{thebibliography} and \end{bibliography}

%%% AUTHOR: Include a short description of each author following the
%%% structure below. Use the same short tags used previously.  
%%% Use \imageat{} and \imagedot{} instead of "@" and "." in
%%% email addresses-this replaces the symbols with graphics to avoid 
%%% e-mail address harvesting from the .pdf file
\begin{aicauthors}
\begin{authorinfo}[jacob]
  Jacob Fox\\
  Department of Mathematics\\
  Stanford University\\
  Stanford, USA\\
  jacobfox\imageat{}stanford\imagedot{}edu\\
  \url{https://stanford.edu/~jacobfox}
\end{authorinfo}
\begin{authorinfo}[lisa]
  Lisa Sauermann\\
  Department of Mathematics\\
  Stanford University\\
  Stanford, USA\\
  lsauerma\imageat{}stanford\imagedot{}edu\\
  \url{https://stanford.edu/~lsauerma}
\end{authorinfo}
\end{aicauthors}

\end{document}